\DeclareMathOperator{\cov}{Cov}
\newtheorem{lemma}{Lemma}[section]
\newtheorem{theorem}[lemma]{Theorem}
\newtheorem{definition}[lemma]{Definition}
\newtheorem{corollary}[lemma]{Corollary}
\newtheorem{model}[lemma]{Model}
\newtheorem{assumption}[lemma]{Assumption}
\newcommand{\calA}{\mathcal{A}}
\newcommand{\calB}{\mathcal{B}}
\newcommand{\Ee}{\mathscr{E}} 
\newcommand{\F}{\mathscr{F}}
\newcommand{\N}{\mathds{N}}  
\newcommand{\R}{\mathds{R}}   
\newcommand{\Z}{{\mathds{Z}}} 
\newcommand{\bx}{\boldsymbol{x}}
\newcommand{\by}{\boldsymbol{y}}
\newcommand{\bz}{\boldsymbol{z}}
\newcommand{\bK}{\boldsymbol{K}}
\newcommand{\bU}{\boldsymbol{U}}
\newcommand{\bX}{\boldsymbol{X}}
\newcommand{\bY}{\boldsymbol{Y}}
\newcommand{\bZ}{\boldsymbol{Z}}
\newcommand{\bNull}{\mbox{\boldmath{$0$}}}
\newcommand{\bEins}{\mbox{\boldmath{$1$}}}
\newcommand{\Var}{\mathrm{Var}}
\newcommand{\cid}{\stackrel{\mbox{\tiny d}}{\longrightarrow}} 
\newcommand{\cip}{\stackrel{\mbox{\tiny $p$}}{\longrightarrow}} 
\newcommand{\claw}{{\stackrel{\mbox{\tiny d}}{\longrightarrow}}}
\newcommand{\be}{\begin{equation}}
\newcommand{\ee}{\end{equation}}
\newcommand{\ben}{\begin{enumerate}[a)]} 
\newcommand{\bEN}{\begin{enumerate}[1.]}
\newcommand{\een}{\end{enumerate}}
\newcommand{\eEN}{\end{enumerate}}
\newcommand{\bit}{\begin{itemize}}
\newcommand{\eit}{\end{itemize}}
\DeclareMathOperator*{\argmax}{arg\,max}
\DeclareMathOperator*{\esssup}{ess\,sup}
\def\@seccntformat#1{\csname the#1\endcsname. }
\def\section{\@startsection {section}{1}{\z@}{-3.5ex plus -1ex minus
    -.2ex}{1.3ex plus .2ex}{\center\large\sc}}
\def\subsection{\@startsection{subsection}{2}{\z@}{3.25ex plus 1ex minus .2ex}{-1em}{\normalsize\bf}}
\def\subsubsection{\@startsection{subsubsection}{3}{\z@}{3.25ex plus 1ex minus .2ex}{-1em}{\normalsize\it}}
\title{\textsc{Testing for changes in Kendall's tau}\footnote{The authors wish to thank their colleague Roland Fried for several very stimulating discussions that motivated this paper. Moreover, we are grateful for helpful comments from the editors and referees, which substantially improved a previous version of the paper. We are also indebted to Alexander D\"urre, who did a thorough proofreading of the manuscript. The research was supported in part by the 
Collaborative Research Grant 823 {\em Statistical modelling of nonlinear dynamic processes} of the German Research 
Foundation.}}
\author{
	\textsc{Herold Dehling} \\
	\textit{\small University of Bochum} \\[3.0ex]
	\textsc{Daniel Vogel}\footnote{Corresponding author: Daniel Vogel, E-Mail: daniel.vogel@abdn.ac.uk.} \\
	 \textit{\small University of Aberdeen} \\[3.0ex]
	\textsc{Martin Wendler} \\ 
	 \textit{\small University of Greifswald}\\[3.0ex]
		\textsc{Dominik Wied} \\
 \textit{\small  University of Cologne}
}
\date{}
\begin{document}

\maketitle
\newpage
{\bf Running Head:} Testing for changes in Kendall's tau

\bigskip
{\bf Proofs should be sent to:} Daniel Vogel, daniel.vogel@abdn.ac.uk


\bigskip
\begin{center}
{\bf Abstract}
\end{center}
\noindent
For a bivariate time series $((X_i,Y_i))_{i=1,\ldots,n}$ we want to detect whether the
correlation between $X_i$ and $Y_i$ stays constant for all $i = 1,\ldots,n$.
We propose a nonparametric change-point test statistic based on Kendall's tau. The asymptotic distribution under the null hypothesis of no change follows from a new $U$-statistic invariance principle for dependent processes. Assuming a single change-point, we show that the location of the change-point is consistently estimated.
Kendall's tau possesses a high efficiency at the normal distribution,
as compared to the normal maximum likelihood estimator, Pearson's moment correlation. Contrary to Pearson's
correlation coefficient, it shows no loss in efficiency
at heavy-tailed distributions, and is therefore particularly suited for financial data, where heavy tails are common.
We assume the data $((X_i,Y_i))_{i=1,...,n}$ to be stationary and $P$-near epoch dependent on
an absolutely regular process. The $P$-near epoch dependence condition constitutes a generalization of the usually considered $L_p$-near epoch dependence allowing for arbitrarily heavy-tailed data. We investigate the test numerically, compare it to previous proposals, and 
illustrate its application with two real-life data examples.

\bigskip
\bigskip
{\bf Keywords}: Change-point analysis, Kendall's tau, $U$-statistic, functional limit theorem, near epoch dependence in probability





\newpage


 \section{Introduction} 

For risk management and portfolio optimization, the dependence between financial asset prices is of enormous importance. It is often assumed to be constant over the observed time period, which is a simplifying assumption that is evidently violated for longer observation periods. For good statistical modeling and successful decision making it is essential to detect changes in the association of financial price processes and, within reasonable time frames, re-estimate the correlation parameters. Particularly, in times of global financial crises, the price processes of most financial assets tend to be highly dependent, united in their common downward trend, causing the hedging powers of investment diversification to cease --- an effect for which the term \emph{diversification meltdown} has been coined. 

The problem of detecting changes in the distribution of sequential observations has a long history in statistics, see e.g.~\citet{csorgo:horvath:1997}. However, particularly detecting changes in the dependence structure of multivariate time series has attracted the focus of statistical research only recently. Examples for such detection procedures are \citet{loretan:phillips:1994}, who test for covariance stationarity of a possibly heavy-tailed time series, \citet{giacomini:hardle:spokoiny:2009}, who consider tests for homogeneity of time-varying copulae, \citet{aue:2009}, who propose a test for a constant covariance matrix, and \citet{wied:kraemer:dehling:2011}, who suggest a change-point test for correlations between two random variables based on Pearson's correlation coefficient.

With this paper, we want to contribute to the literature by proposing a new test for constant Kendall's tau that can be applied to dependent series. We recommend to use the rank correlation measure Kendall's tau instead of Pearson's correlation coefficient because it is almost as efficient as the moment correlation at normality, but is significantly more efficient at heavy-tailed distributions. For details see Section \ref{sec:props}. This issue is very important in finance and economics, where many key variables, including financial returns and foreign exchange rates, are commonly known to be heavy-tailed. 

\citet{gombay:horvath:1999}, \citet{quessy:said:favre:2013} study tests for changes in the dependence of multivariate time series based on Kendall's tau, but only consider independent observations. Moreover, these authors also use a bootstrap approximation for deriving critical values of a test statistic, while we provide a consistent long-run variance estimator, and do not need to rely on the bootstrap. A recent reference is \citet{buecher:kojadinovic:2016}, who propose change-point tests for Kendall's tau under mixing conditions, but not under the concept of $P$-near epoch dependence (see below).
Our change-point test does not require information on the position of potential break points. This is a structural similarity with many other tests in the econometrics and statistics literature, \citep[e.g.][]{inoue:2001}. See also Section 5.1 in \citet{stock:1994} and the references therein. 
%

Kendall's tau is a $U$-statistic. 
The main tool in analyzing the asymptotic behavior of the test statistic is a new functional limit theorem for sequential $U$-statistics processes for dependent data. This theorem is of interest in its own right. Allowing unbounded kernels and placing no moment requirement on the data process, it is formulated in by far greater generality than necessary for Kendall's tau, where the $U$-kernel is bounded and, further, the whole analysis can be restricted to bounded data sequences by an invariance argument. This functional limit theorem provides the basis for constructing change-point tests in the same spirit for any quantity that may be expressed as a $U$-statistic.

Several authors have used $U$-statistics for change-point problems before \citep[e.g.][]{gombay:horvath:1995,gombay:horvath:1999}.
The main contribution of the present paper is the thorough treatment of dependent series. 
We consider approximating functionals of mixing processes, where the approximation is in probability, not in an $L_p$ sense as in the usual near epoch dependence condition. We call this approximation concept $P$-near epoch dependence. It generalizes $L_p$-near epoch dependence, not requiring the existence of any moments, and is hence a fitting framework for nonparametric and robust data analysis.

Another appealing property of our approach in terms of broad applicability is the lack of assumptions on the copula between the two random variables $X_i$ and $Y_i$ for a fixed $i$. Although Kendall's tau is a dependence measure that only depends on the copula, there are no conditions on the existence of partial derivatives. In particular, distributions with non-zero tail dependence are included in our assumptions, which is important in empirical finance \citep[e.g.][]{patton:2006}. See \citet{segers:2012} for a discussion on this issue. 

The paper is organized as follows: Section \ref{sec:inf} contains the main theoretical results about $U$-statistics, that is, the functional limit theorem for $U$-statistics and the consistency result for the estimator of the long-run variance.  In Section \ref{sec:main}, the asymptotic properties of the test statistic under the null hypothesis are given. Section \ref{sec:identification} deals with estimating the location of a potential change-point. 
In Section \ref{sec:props}, the test is compared to previous proposals. Section \ref{sec:sim.res} contains a numerical study, where we observe
%
%
that the efficiency properties of Kendall's tau and Pearson's correlation coefficient translate into similar size and power properties of the corresponding change-point tests.  Section \ref{sec:example} demonstrates applications to financial data examples. 
Appendix \ref{sec:app:pned} contains further background on the concept of $P$-near epoch dependence, Appendix \ref{sec:app:local} investigates the behavior of the $U$-statistic process under a sequence of local alternatives, and Appendix \ref{sec:app:proofs} contains the proofs for the theorems of the main text. 
The proofs of the lemmas in the appendix and further technical results can be found in the online supplement. Readers may refer to the supplementary material associated with this article, available at Cambridge Journals Online (journals.cambridge.org/ect).

We use bold type face to denote vector-valued objects. Throughout, $|\cdot|_p$ denotes the $p$-norm in $\R^d$, $p \in [1,\infty)$, $d \in \N$. To denote the $L_p$ norm $\left( E |X|^p \right)^{1/p}$ of a real-valued random variable $X$, we occasionally write $||X||_p$, $p \in [1,\infty)$. All random variables are defined on a common probability space $(\Omega,\F,P)$.


 \section{Invariance Principle for $U$-Statistics of dependent series}
\label{sec:inf}

We treat Kendall's tau in the framework of asymptotic $U$-statistic theory. We first give a functional central limit theorem for general $U$-statistics for multivariate and short-range dependent time series. We further devise an estimator for the long run variance term and show its consistency.
Throughout this section, let $(\bX_i)_{i\in\Z}$ be a strictly stationary sequence of $d$-dimensional random variables with ($d$-dimensional) distribution function $F$. Let further $g:\R^d\times\R^d\rightarrow\R$ be a measurable, symmetric function. We call 
\begin{equation*}
 U_n = U_n\left(g\right) =\frac{2}{n(n-1)}\sum_{1\leq i<j\leq n} g\left(\bX_i,\bX_j\right)
\end{equation*}
the $U$-statistic with kernel $g$. The essential tool to treat $U$-statistics asymptotically is the Hoeffding decomposition into a linear and degenerate part, i.e., 
\begin{equation*}
	U_n\left(g\right) = 
	U + 
	\frac{2}{n}\sum_{i=1}^{n}g_{1}\left(\bX_{i}\right)
	+\frac{2}{n\left(n-1\right)}\sum_{1\leq i<j\leq n}g_{2}\left(\bX_{i},\bX_{j}\right),
\end{equation*}
where
\be \label{eq:hoeffding}
U  = E g(\bX,\bY), \qquad
g_1(\bx) = Eg(\bx,\bY)-U, \qquad
g_2(\bx,\by) =g(\bx,\by) - g_1(\bx) -g_1(\by) -U,
\ee
and $\bX$, $\bY$ are independent copies of $\bX_0$.

Concerning the serial dependence structure of the process $(\bX_i)_{i\in\Z}$, we assume it to be near epoch dependent in probability ($P$-NED) on an absolutely regular process. For the formal statement of this short-range dependence assumption, which follows below, it is convenient to let the process $(\bX_i)$ be indexed by $\Z$. The observed data is then the positive branch of the doubly infinite sequence.
For two sub-$\sigma$-fields $\calA$, $\calB$ of $\F$, we define the absolute regularity coefficient
\[     
	 	\beta(\calA,\calB) = E \left[ \esssup\left\{ | P(A|\calB) - P(A)| \, : \, A \in \calA  \right\} \right].
\]
The absolute regularity coefficient is a measure of dependence between the $\sigma$-fields $\calA$ and $\calB$, it lies between $0$ and $1$, and equals 0 if $\calA$ and $\calB$ are independent.
\begin{definition}\rm  \label{def:dep}
Let $(\bX_n)_{n \in \Z}$ and $(\bZ_n)_{n \in \Z}$ be $d$- and $r$-variate stochastic processes on $(\Omega,\F,P)$, respectively, $d,r \ge 1$, such that the $(d + r)$-variate process $((\bX_n,\bZ_n))_{n \in \Z}$ is stationary. For $k \le n$, let $\F_k^n = \sigma(\bZ_k,\ldots,\bZ_n)$, where also $k = -\infty$ and $n = \infty$ are permitted.
\begin{enumerate}[(i)]
\item
The process $(\bZ_n)_{n \in \Z}$ is called \emph{absolutely regular} if the absolute regularity coefficients
\[
		\beta_k = \beta(\F^0_{\!\!-\infty},\F_k^{\infty}), \qquad k \ge 1, 
\]
converge to zero as $k \to \infty$. 
\item
The process $(\bX_n)_{n \in \Z}$ is called \emph{near epoch dependent in probability} or short \emph{$P$-near epoch dependent} ($P$-NED) on the process $(\bZ_n)_{n \in\Z}$ if there is a sequence of approximating constants $(a_k)_{k \in \N}$ with $a_{k} \to 0$ as $k \to \infty$, a sequence of functions $f_k:\R^{r \times (2k+1)} \to \R^d$, $k \in \N$, and a non-increasing function $\Phi: (0,\infty) \to (0,\infty)$ such that 
\be \label{eq:l_0}
	P \left( \left|  \bX_0 - f_k(\bZ_{-k},\ldots,\bZ_{k})  \right|_1 > \varepsilon\right) \ \le \ a_k \Phi(\varepsilon) 
\ee
for all $k \in \N$ and $\varepsilon > 0$. 
\end{enumerate}
\end{definition}
By Lemma \ref{lem:l_0} (iii) of Appendix A, $L_p$-near epoch dependence ($p \ge 1$) implies near epoch dependence in probability. So $P$-NED can be viewed as a generalization of the more frequently considered $L_2$-NED.  All the limit theorems in this article may be formulated for $L_p$-NED sequences as well. We prefer to use $P$-NED instead of $L_p$-NED, since we particularly want to include very heavy tailed data and do not want to assume the existence of even first moments. Further details and references on the different weak dependence concepts are given in Appendix \ref{sec:app:pned}.

For the functional $U$-statistic limit theorem we require Assumption \ref{ass3}, \ref{ass2} and \ref{ass1} to hold.
\begin{assumption}\label{ass3} The process $(\bX_i)_{i\in\Z}$ is $P$-NED on an absolutely regular sequence $(\bZ_i)_{i\in\Z}$, and there is a $\delta > 0$ such that 
\begin{equation*}
a_k\Phi(k^{-6}) = O (k^{-6(2+\delta)/\delta})
\qquad \mbox{ and } \qquad
\sum_{k=1}^\infty k\beta_k^{\delta/(2+\delta)} < \infty.
\end{equation*}
\end{assumption}

Furthermore, a moment condition on $g(\bX_i,\bX_j)$ is required. Note that we do not impose any moment conditions on the data sequence 
$(\bX_i)_{i\in\Z}$ itself. 
\begin{assumption}\label{ass2} There is a constant $M > 0$ such that for all $k,n\in\N$
\begin{equation*}
E\left|g\left(f_k(\bZ_{-k},\ldots,\bZ_k),f_k(\bZ_{n-k},\ldots,\bZ_{n+k})\right)\right|^{2+\delta}\leq M
\quad
\mbox{ and }
\quad
E\left|g(\bX_0,\bX_n)\right|^{2+\delta}\leq M.
\end{equation*}
\end{assumption}

Note that Assumptions \ref{ass3} and \ref{ass2} are linked via $\delta$. Weaker moment conditions have to be paid for by a faster decay of the short-range dependence coefficients and vice versa. The next assumption is also known as the variation condition and was introduced by \citet{denker:keller:1986}. It can be understood as a form of Lipschitz continuity of the kernel $g$ with respect to $F$.

\begin{assumption}\label{ass1} There are constants $L, \epsilon_0 > 0$ such that for all $\epsilon\in(0,\epsilon_0)$
\begin{equation*}\label{line8}
 E\left(\sup_{|\bx-\bX|\leq \epsilon,|\by-\bY|\leq \epsilon}\left|g\left(\bx,\by\right)-g\left(\bX,\bY\right)\right|\right)^2\leq L\epsilon,
\end{equation*}
where $\bX$, $\bY$ are independent with the same distribution as $\bX_0$.
\end{assumption}
We are now ready to state the following weak invariance principle for the sequential $U$-process. The proof is given in Appendix C.
\begin{theorem}\label{theo1} Under Assumptions \ref{ass3}, \ref{ass2} and \ref{ass1}, we have
\begin{equation*}
			\left(\frac{[ns]}{\sqrt{n}}\left(U_{[ns]}(g)-U\right)\right)_{s\in[0,1]} \ \cid \ 2\sigma W
\end{equation*}
in $D[0,1]$, where $W$ denotes a standard Brownian motion, and the long run variance is given by
\begin{equation*}
		\sigma^2	=	 \sum_{r=-\infty}^\infty \cov\left(g_1(\bX_0),g_1(\bX_r)\right).
\end{equation*}
\end{theorem}

Without specific assumptions on the distribution of the whole process $(\bX_i)_{i\in\Z}$, the long run variance term $\sigma^2$ is unknown, and, even if one is willing to make such assumptions, it may yet be cumbersome to evaluate it. Thus, for statistical applications, an estimator of $\sigma^2$ is desired. For the sample mean, the problem of estimating the long run variance has already been studied by many authors. Our proposal for an estimate of $\sigma^2$ is based upon combining the HAC (heteroscedasticity and autocorrelation consistent) kernel estimator by \citet{dejong:2000} with an empirical version of the Hoeffding decomposition.

For the kernel $g$, we define the empirical version $\hat{g}_1$ of $g_1$ as 
\begin{equation*}
		\hat{g}_1(\bx) = \frac{1}{n}\sum_{i=1}^ng(\bx,\bX_i)-\frac{1}{n^2}\sum_{i,j=1}^ng(\bX_i,\bX_j),
\end{equation*}
and the empirical covariance for lag $r$ as $\hat{\rho}(r) =\frac{1}{n}\sum_{i=1}^{n-r}\hat{g}_1(\bX_i)\hat{g}_1(\bX_{i+r})$.
We then estimate $\sigma^2$ by
\be \label{eq:variance.est}
	\hat{\sigma}_n^2 = \hat{\rho}(0)+2\sum_{r=1}^{n-1} \kappa \left(\frac{r}{b_n}\right)\hat{\rho}(r),
\ee
where $\kappa$ is a weight function (or HAC kernel function) and $b_n$ a bandwidth depending on $n$. In order achieve consistency, $\kappa$ and $b_n$ have to fulfill some regularity conditions.
\begin{assumption}\label{ass4} The function $\kappa:[0,\infty)\rightarrow[-1,1]$ is continuous at 0 and at all but a finite number of points and $\kappa(0)=1$. Furthermore, $|\kappa|$ is dominated by a non-increasing,
integrable function and
\begin{equation*}
\int_0^\infty\left|\int_0^\infty \kappa(t)\cos(xt)dt\right|dx<\infty.
\end{equation*}
The bandwidth $b_n$ satisfies $b_n\rightarrow \infty$ as $n\rightarrow\infty$ and $b_n/\sqrt{n}\rightarrow0$.
\end{assumption}
Assumption \ref{ass4} mainly coincides with Assumption 1 of \citet{dejong:2000}. It is satisfied by a large class of kernels, in particular the popular Bartlett kernel $\kappa(t) = (1 - |t|) \bEins_{\{|t| \le 1\}}$.
The proof of the following consistency result is also given in Appendix C.

\begin{theorem}\label{theo3} Under Assumptions \ref{ass3}, \ref{ass2}, \ref{ass1} and \ref{ass4}, we have $\hat{\sigma}_n^2	\cip	\sigma^2$
as $n\rightarrow\infty$, where $\sigma^2$ is as in Theorem \ref{theo1}.
\end{theorem}

Theorems \ref{theo1} and \ref{theo3} give rise to a general test statistic 
\[
	 \hat{T}_n = \frac{1}{2 \hat\sigma_n} \max_{2 \le k \le n-1} \frac{k}{\sqrt{n}} | U_k - U_n |
\]
for CUSUM-type change-point tests based on $U$-statistics.
By combining the continuous mapping theorem (applied to the functional which maps $x \in D[0,1]$ to the real number $\sup_{0\le t \le 1} |x(t) - t x(1)|$) and Slutsky's lemma we arrive at the following result.
\begin{corollary} \label{cor:1}
Under Assumptions \ref{ass3}, \ref{ass2}, \ref{ass1} and \ref{ass4}, and if $\sigma^2 > 0$, we have $\hat{T}_n  \cid \sup_{0\leq \lambda \leq 1} |B(\lambda)|$, where $B$ is a standard Brownian bridge on $[0,1]$.
\end{corollary}

The assumptions of Theorems \ref{theo1} and \ref{theo3} are very broad and easy to verify. The kernel $g$ as well as the marginal distribution $F$ are, except for the variation condition, completely arbitrary. Furthermore, all time series models relevant in financial applications fulfill our short-range dependence condition with exponential decay of $a_k$ and $\beta_k$. For further details on how $P$ NED is related to the usual $L_2$ near epoch dependence, see Appendix \ref{sec:app:pned}.

Invariance principles for $U$-statistics similar to Theorem \ref{theo1} were established by \citet{yoshihara:1976} for absolutely regular processes, which do not cover many time series models. Central limit theorems for $U$-statistics have been investigated under more general conditions: \citet{denker:keller:1986} considered Lipschitz continuous functionals of absolutely regular sequences and
\citet{borovkova:burton:dehling:2001} $L_1$-NED processes. 
As far as we know, functional central limit theorems (invariance principles) for $U$-statistics have not been studied under more general dependence conditions.

The potential applications of Corollary \ref{cor:1} are manifold. Several $U$-statistics have gained popularity as estimators that combine high efficiency under normality with appealing invariance and robustness properties (in the classical sense of robust statistics). The leading example is certainly Kendall's tau, which we will study in depth in Sections \ref{sec:main} through \ref{sec:example}. Another prominent example is Gini's mean difference 
\[
	g_n = \frac{2}{n(n-1)} \sum_{1 \le i <j \le n} |x_i - x_j|
\]
for univariate data $x_1, \ldots, x_n$. Similarly to Kendall's tau, Gini's mean difference is, as a measure of scale, almost as efficient as the maximum likelihood estimator at normality (the standard deviation), but is more efficient than the latter at heavy-tailed distributions and less sensitive to single outlying observations \citep{gerstenberger:vogel:2015}. Thus Corollary \ref{cor:1} directly yields the asymptotic null distribution of a Gini's mean difference based change-point test for scale, which requires only $2+\delta$ moments as compared to $4+\delta$ moments for a sample-variance-based test. Assumption \ref{ass1} is automatically fulfilled in this example as the corresponding kernel $g(x,y) = |x-y|$ is Lipschitz continuous.

Furthermore, the results can be straightforwardly extended to multivariate $U$-statistics. The process convergence of a $p$-dimensional $U$-statistic
\[
	\bU_n = (U_n^{(1)}, \ldots, U_n^{(p)}),
\]
where $p$ is generally different from the data dimension $d$, is obtained by means of the Cram\'er--Wold device by considering the univariate $U$-statistic $\tilde{U}_n = \sum_i \lambda_i U_n^{(i)}$ for arbitrary $(\lambda_1, \ldots, \lambda_p) \in \R^p$. Similarly, a multivariate version of Theorem \ref{theo3} follows from an entry-wise consideration as convergence in probability of a random matrix is implied by the convergence of its marginals. 
Besides vector-valued versions of Kendall's tau or Gini's mean difference to test for changes in the rank correlation or scale, respectively, of several time series jointly, we can also consider the following estimator, 
\[
	\bK_n =  \frac{2}{n(n-1)} \sum_{1 \le i <j \le n} \frac{ (\bX_i-\bX_j)(\bX_i-\bX_j)^\top }{ |\bX_i-\bX_j|_2^2 }, 
\]
which is sometimes referred to as the \emph{spatial Kendall's tau matrix}. It is also known to possess a rather high efficiency at normality, which has led several authors to consider this estimator in various contexts \citep[e.g.][]{fan:liu:wang:2015}. This estimator allows to consistently estimate the eigenvectors and the ordering of the eigenvalues of the covariance matrix. Thus Theorems \ref{theo1} and \ref{theo3} also provide the theoretical foundation for a robust change-point test for, say, detecting changes in the leading eigenvector of the marginal covariance matrix of a multivariate time series. 

Further, these $U$-statistic results encompass all linear statistics, i.e., $U$-statistics of order one, such as the classical CUSUM test based on the sample mean. The classical change-point test for detecting changes in scale, as studied by \citet{inclan:tiao:1994}, is essentially the CUSUM test applied to the squares of the centered data. It requires some additional technical effort to thoroughly deal with the centering for the data, which is ignored by some authors and dealt with in different ways by other authors. Employing the $U$-statistic representation of the sample variance with kernel $g(x,y) = (x-y)^2/2$, our results provide another elegant method of proof.

A referee raised the question whether the tail dependence coefficient could also be treated in the current $U$-statistic framework. In this situation, one would consider a $U$-statistic of order one with kernel
\begin{equation}\label{TDC}
		g(F_X(x_i),F_Y(y_i)) = \sqrt{\frac{n}{k}} \bEins_{\left\{ F_X(x_i) \leq k/n,\, F_Y(y_i) \leq k/n\right\}}
\end{equation}
for a bivariate time series $((X_i,Y_i))_{i\in\Z}$, where $F_X$ and $F_Y$ denote the marginal distributions of $X_0$ and $Y_0$, respectively, and $k = k(n)$ converges to $\infty$ and $k/n$ converges to $0$. Using this kernel in Theorem \ref{theo1}, one obtains the process $\mathbb{B}_n$ from \citet{buecher:jaeschke:wied:2014}, which provides the basis for a CUSUM-type test statistic for detecting changes in tail dependence. Thus one would need to extend the current setting to a situation where the kernel $g$ may depend on $n$. More crucially, one would need a theorem in which boundedness of the second moments of the kernel is sufficient. While $g^2$ converges to the tail dependence coefficient, $g^{2+\delta}$ diverges unless the tail dependence coefficient is zero. This seems difficult to achieve in the current serial dependence setting. A solution could be to restrict the dependency conditions, e.g.\ to uniformly mixing processes.
An alternative way of treating tail dependence is by employing the direct link between the tail dependence coefficient and Kendall's tau in specific parametric models. By testing for constant Kendall's tau and assuming a specific model is true, one automatically tests for constant tail dependence as well. However, such an approach may be considered not optimal since, in contrast to an approach based on \eqref{TDC}, it strongly relies on the model assumption, see the discussion in \citet{buecher:jaeschke:wied:2014}.


 \section{Change-point Detection for Kendall's tau}
\label{sec:main}

Let $((X_i,Y_i))_{i\in\Z}$ be a strictly stationary series of bivariate random vectors with marginal distribution function $F(x,y) = P(X_0\leq x, Y_0\leq y)$. Throughout the remainder of the article, we assume $F$ to be Lipschitz continuous. This is fulfilled if $F$ possesses a bounded density, but, e.g., $X_0 = Y_0$ almost surely is also allowed. 
Kendall's rank correlation coefficient, also known as Kendall's tau, is defined as
\[
 	\tau = P((X^\prime-X)(Y^\prime-Y)>0) - P((X^\prime-X)(Y^\prime-Y) < 0),
\]
where $(X,Y)$ and $(X^\prime,Y^\prime)$ are two independent random variables with distribution function $F$. 
Kendall's tau is a measure of correlation between the random variables $X$ and $Y$, where we understand correlation generally as \emph{monotone dependence}, which, loosely speaking, carries information on to what extent one variable on average increases or decreases as the value of the other increases. Kendall's tau, as well as the related dependence measure Spearman's rho, is a function of the copula only. In particular, it does not depend on the marginal distributions and is hence invariant to monotone marginal transformations, see 
\citet[e.g.][Chap.~5]{nelsen:2006}. Consequently, its sample version only depends on the ranks of the data, which is
the reason for Kendall's tau being also referred to as \emph{rank correlation measure}.
The sample version of Kendall's tau is defined as
\[
 \hat{\tau}_n = \frac{2}{n(n-1)} \sum_{1 \le i <j \le n} 
 								\left[ 
 											\bEins_{(0,\infty)}\left\{ (X_j-X_i)(Y_j-Y_i) \right\} 
 											- \bEins_{(-\infty,0)}\left\{(X_j-X_i)(Y_j-Y_i) \right\}  
 								\right],
\]
which is a $U$-statistic with kernel function $g:\R^2\times\R^2\rightarrow \R$ given by
\begin{equation} \label{eq:kernel}
	g\left((x,y),(x^\prime,y^\prime)\right ) = 
	\mathbf{1}_{(0,\infty)} \{(x^\prime-x)(y^\prime-y)\} - \mathbf{1}_{(-\infty,0)}\{(x^\prime-x)(y^\prime-y)\}.
\end{equation}
For dependent data, $\hat\tau_n$ is not necessarily unbiased, but under the weak dependence conditions we consider, it is (strongly) consistent for $\tau$.
We will study a test for change in the dependence structure of the marginals by the test statistic
\[
	 \hat{T}_{\tau,n} = \max_{k = 1, \ldots, n}\frac{k}{\sqrt{n}}|\hat{\tau}_k-\hat{\tau}_n|,
\]
which rejects the null hypothesis of constant rank correlation if $\hat{T}_{\tau,n}$ is too large. 
By means of the functional limit theorem for sequential $U$-statistics (Theorem \ref{theo1}), we know that
\begin{equation*}
		\left(\frac{[tn]}{\sqrt{n}}(\hat{\tau}_{[tn]}-\tau)\right)_{t\in[0,1]}
\end{equation*}
converges under the assumption of no change and for weakly dependent series weakly to a Brownian motion $2 \sigma_\tau W$, where 
\begin{equation} \label{eq:lrv.tau}
	\sigma^2_\tau =  \sum_{j=-\infty}^\infty  E \left[\psi(X_0,Y_0) \psi(X_j,Y_j)\right]
\end{equation}
with $\psi(x,y) = 4 F(x,y) - 2 F_X(x) - 2 F_Y(y) + 1 - \tau$. 
The function $\psi$ can be seen to be the linear part $g_1$ of the Hoeffding decomposition (\ref{eq:hoeffding}) for the specific kernel $g$ in (\ref{eq:kernel}) for any continuous distribution function $F$.
By the continuous mapping theorem, we have the weak convergence
\begin{equation*}
 \hat{T}_{\tau,n} = \max_{k = 1, \ldots, n}\frac{k}{\sqrt{n}}|\hat{\tau}_k-\hat{\tau}_n|
	\cid 2 \sigma_\tau \sup_{0\leq \lambda \leq 1} |B(\lambda)|,
\end{equation*}
where $B$ is a standard Brownian Bridge. The distribution of $\sup_{0\leq \lambda \leq 1} |B(\lambda)|$ is known and sometimes referred to as Kolmogorov distribution. We use the estimator for the long run variance proposed in Section \ref{sec:inf}. Let $F_n$, $F_{X,n}$ and $F_{Y,n}$ denote the empirical distribution functions of $((X_i,Y_i))_{i =1, \ldots, n}$, $(X_i)_{i =1,\ldots, n}$ and $(Y_i)_{i = 1,\ldots,n}$, respectively. 
Then 
$\hat{\psi}_{n,i} = 4 F_n(X_i,Y_i) - 2F_{X,n}(X_i)  - 2F_{Y,n}(Y_i) + 1 - \hat{\tau}_n$
can be seen to equal $\hat{g}_1((X_i,Y_i))$, and the variance estimator (\ref{eq:variance.est}) can be written as
\be \label{eq:D.hat}
	\hat{\sigma}^2_{\tau,n} \ = \ \frac{1}{n}\sum_{i=1}^n \hat{\psi}^2_{n,i} \ + \ 
	\frac{2}{n} \sum_{j =1}^{n-1} \kappa\left(\frac{j}{b_n}\right) \, 
				 \sum_{i=1}^{n-j} \hat{\psi}_{n,i} \hat{\psi}_{n,i+j},
\ee
where $\kappa$ and $b_n$ are the same as in Section \ref{sec:inf}.
Corollary \ref{th:change+variance-asy} below gives the asymptotic distribution of the test statistic $\hat{T}_n/ (2 \hat{\sigma}_{\tau,n})$ under the null hypothesis of no change. 
\begin{corollary} \label{th:change+variance-asy}
Let $((X_i,Y_i))_{i\in\Z}$ be a two-dimensional, stationary process with a Lipschitz continuous marginal distribution function. Assume that $((X_i,Y_i))_{i\in\Z}$ is $P$-NED with approximating constants $(a_k)_{k \ge 1}$ on an absolutely regular process with absolute regularity coefficients $(\beta_k)_{k \ge 1}$ satisfying Assumption \ref{ass3} for some $\delta > 0$. Let further Assumption \ref{ass4} hold. 
Then, if $\sigma^2_\tau > 0$,  
\begin{equation} \label{eq:change+variance-asy}
 \frac{\hat{T}_{\tau,n}}{2\hat{\sigma}_{\tau,n}}  \  \claw \ \sup_{0\leq \lambda \leq 1} |B(\lambda)|, 
\end{equation}
where $(B(\lambda))_{0\leq \lambda \leq 1}$ is, as before, a standard Brownian bridge.
\end{corollary}


 \section{Change-point Estimation and Local Power}
\label{sec:identification} 

If the test rejects the null hypothesis of constant correlation, and if it is furthermore reasonable to assume that there is one sudden change-point, it is of interest to locate this change-point. An intuitive estimator, which is common when dealing with CUSUM-type change-point tests, is
the position at which the weighted differences take their maximum, that is 
\[
	\hat{k}_n = \argmax_{1 \leq k \leq n} \frac{k}{\sqrt{n}} | \hat \tau_k - \hat \tau_n |.
\]
We will show in the following that this is indeed a reasonable estimator. We will assume that the following model holds.%
\begin{model}[Change-point model]
\rm
\label{mod:cp}
Let $0 < \lambda^* < 1$. For $n \in \N$ let $((X_{i}^{(n)},Y_{i}^{(n)}))_{1\le i \le [\lambda^* n]}$ and $((X_{i}^{(n)},Y_{i}^{(n)}))_{[\lambda^* n]+1\le i \le n}$ 
be two bivariate, stationary stochastic processes with marginal distribution functions $F$ and $G$, respectively. 
Let furthermore $((X_{i}^{(n)},Y_{i}^{(n)}))_{1\le i \le n}$ be $P$-near epoch dependent\footnote{
	For non-stationary processes the short-range dependence conditions have to be formulated slightly more generally than in 
	Definition \ref{def:dep}. The absolute regularity coefficients $(\beta_k)_{k \in \N}$ are defined as $\beta_k = 
	\sup_{t\in\Z}  \beta(\F_{-\infty}^t, \F_{t+k}^{\infty})$, and the $P$-NED approximation coefficients $(a_k)_{k \in 
	\N}$ must satisfy
	\[
		\sup_{t \in \Z} P\left(  \left|  \bX_t - f_{k,t}(\bZ_{t-k},\ldots,\bZ_{t+k})  \right|_1 > \varepsilon    \right) \ 			\le \ a_k \Phi_t(\varepsilon),	
	\]
	where the functions $f_{k,t}$ and $\Phi_t$ may also depend on $t$. The underlying process $(\bZ_t)_{t\in\Z}$ is not required to be stationary.
} 
on an absolutely regular process with coefficients satisfying Assumption \ref{ass3} uniformly for all $n$. 
\end{model}%
The goal is to estimate $\lambda^*$. Let $\tau_F$ and $\tau_G$ denote Kendall's tau of $F$ and $G$, respectively. Moreover, let $\tau_{F,G} = E g((X_1,Y_1), (X_2,Y_2))$, cf.\ (\ref{eq:kernel}), where $(X_1,Y_1) \sim F$ and $(X_2,Y_2) \sim G$ are independent.%
\begin{theorem} \label{th:identification}
If $((X_{i}^{(n)},Y_{i}^{(n)}))_{1\le i \le n, n \in\N}$ follows Model \ref{mod:cp}, furthermore $\tau_F \neq \tau_G$ and
\be \label{eq:strange.condition}
	\frac{(1-\lambda^*)^2 }{2 \left( (1-\lambda^*)^2 + \lambda^*\right) } \ \le \ \frac{\tau_{F,G}-\tau_F}{\tau_G-\tau_F} \ < \ 1,
\ee
then $\hat{k}_n/ n  \cip \lambda^*$  as $n \to \infty$.
\end{theorem}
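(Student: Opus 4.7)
Since the multiplicative factor $\sqrt n$ does not affect the argmax, $\hat\lambda_n$ also maximizes the rescaled process $M_n(\lambda) := ([\lambda n]/n)(\hat\tau_{[\lambda n]} - \hat\tau_n)$ over $\lambda\in[0,1]$. The standard recipe for argmax consistency applies: establish $M_n\cip M$ uniformly on $[0,1]$ for an explicit deterministic limit $M$, verify that $|M|$ has its unique global maximum at $\lambda^*$, and invoke the argmax continuous mapping argument to deduce $\hat\lambda_n/n\cip\lambda^*$.

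\emph{Uniform convergence.} For $\lambda>\lambda^*$, put $k=[\lambda n]$, $m=[\lambda^* n]$, and decompose Kendall's U-statistic according to whether a pair falls in the pre-change block, the post-change block, or across the blocks:
\[
  \binom{k}{2}\hat\tau_k \ = \ \binom{m}{2}\hat\tau^F_m \, + \, \binom{k-m}{2}\hat\tau^G_{k-m} \, + \, m(k-m)\,\hat\tau^{FG}_{m,k-m},
\]
where $\hat\tau^{FG}_{m,k-m}$ denotes the two-sample U-statistic of Kendall's kernel on one pre-change and one post-change pair. A weak law of large numbers for bounded-kernel U-statistics of $P$-NED sequences on an absolutely regular base process --- obtainable via Hoeffding projection and the techniques of Appendix \ref{sec:app:pned} --- gives $\hat\tau^F_m\cip\tau_F$, $\hat\tau^G_{k-m}\cip\tau_G$, $\hat\tau^{FG}_{m,k-m}\cip\tau_{FG}$, hence $\hat\tau_{[\lambda n]}\cip g(\lambda)$ with
\[
  g(\lambda) \ = \ \begin{cases} \tau_F, & \lambda\le\lambda^*, \\ (\lambda^*/\lambda)^2\tau_F + ((\lambda-\lambda^*)/\lambda)^2\tau_G + 2(\lambda^*(\lambda-\lambda^*)/\lambda^2)\,\tau_{FG}, & \lambda>\lambda^*. \end{cases}
\]
Uniformity on $[\delta,1]$ follows from Lipschitz continuity of $g$ together with pointwise convergence on a finite grid; the bound $|M_n(\lambda)|\le2\lambda$ handles $\lambda\in[0,\delta]$. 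Thus $M_n\cip M$ uniformly on $[0,1]$, with $M(\lambda) = \lambda(g(\lambda)-g(1))$.

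\emph{Identification of the maximizer.} Set $r = (\tau_{FG}-\tau_F)/(\tau_G-\tau_F)$. A direct computation yields, on $[0,\lambda^*]$,
$M(\lambda) = -\lambda(1-\lambda^*)(\tau_G-\tau_F)(1-\lambda^*+2\lambda^* r)$, linear and, by $\tau_F\ne\tau_G$ with $r\ge0$ (forced by the lower bound of \eqref{eq:strange.condition}), strictly monotone; and on $(\lambda^*,1)$,
\[
  M'(\lambda) \ = \ \frac{\lambda^*(\tau_G-\tau_F)}{\lambda^2}\bigl[A\lambda^2-\lambda^*(1-2r)\bigr], \qquad A \, := \, 2(1-r)-\lambda^*(1-2r) \, > \, 0.
\]
For $r\ge1/2$, the bracket is positive on $(\lambda^*,1)$ and $M$ is strictly monotone there. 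For $r<1/2$, the unique potential critical point is $\lambda^{**} := \sqrt{\lambda^*(1-2r)/A}$, and $\lambda^{**}\le\lambda^*$ rearranges (via $(1-\lambda^*)^2+\lambda^* = 1-\lambda^*+\lambda^{*2}$) to exactly the lower bound in \eqref{eq:strange.condition}, again forcing monotonicity on $[\lambda^*,1]$. Since $M(1)=0$ and $M(\lambda^*)\ne0$, the function $|M|$ is strictly increasing on $[0,\lambda^*]$ and strictly decreasing on $[\lambda^*,1]$, with unique maximum at $\lambda^*$.

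The principal technical hurdle is the uniform weak law for the two-sample cross term $\hat\tau^{FG}_{m,k-m}$, which is not covered by Theorem \ref{th:ustat-ip} (stated under stationarity). Because Kendall's kernel is bounded, this is handled directly via a second-moment calculation using the mixing and $P$-NED coefficients, together with a chaining/grid argument to obtain uniformity in $\lambda$. The algebraic identification of $\lambda^*$ as the maximizer is then an elementary calculus exercise dictated precisely by condition \eqref{eq:strange.condition}.
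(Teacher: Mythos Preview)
Your plan matches the paper's proof: both reduce to the argmax theorem after establishing uniform convergence of $k/n\,(\hat\tau_k-\hat\tau_n)$ to an explicit deterministic function, and both split $\hat\tau_k$ into two one-sample and one two-sample $U$-statistic. Two small points are worth flagging. First, your uniformity step (``Lipschitz continuity of $g$ together with pointwise convergence on a finite grid'') is incomplete as stated: you also need equicontinuity of the \emph{random} functions $M_n$, not just of the limit; this is automatic here because the kernel is bounded (one checks $|k\hat\tau_k-k'\hat\tau_{k'}|\le C|k-k'|$ deterministically), but it should be said. The paper sidesteps the grid argument entirely and obtains the uniform bound via a maximal inequality of Stout applied to the partial sums arising from a two-sample Hoeffding decomposition of the cross term, with the degenerate part controlled by a moment bound from \citet{dehling:fried:2012}. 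Second, you actually carry out the calculus verifying that condition~\eqref{eq:strange.condition} forces the unique maximizer of $|M|$ to sit at $\lambda^*$; the paper only asserts this.
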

Condition (\ref{eq:strange.condition}) prohibits $\tau_{F,G}$ to be too close to $\tau_{F}$ compared to $\tau_G$. It is an open research question which values of $\tau_{F,G}$ are possible for given $\tau_F$ and $\tau_G$, and in particular if $\tau_{F,G}$ may at all lie outside the interval $[\tau_F,\tau_G]$.
%
%
%
%

We further analyze the power of the proposed change-point test under local alternatives. We give a formula for the asymptotic distribution of the test statistic within a specific class of local alternatives.

\begin{model}
\label{mod:la}
{\rm
Let $(X_i,Y_i)_{i\in\Z}$ be a strictly stationary, bivariate process that satisfies Assumption \ref{ass3}, and let $0<\lambda^\ast <1$ and $\Delta>0$. We define for any integer $n\geq 1$ 
\[
 (X_i^{(n)},Y_i^{(n)})=\left\{  
 \begin{array}{ll}
 (X_i,Y_i) & 1\leq i\leq [n\lambda^\ast] \\
 (X_i+\frac{\Delta}{\sqrt{n}}Y_i,Y_i) & [n \lambda^\ast]\leq i\leq n.
  \end{array}
 \right.
\]
We assume that $(X_i,Y_i)$ has an absolutely continuous distribution $F$ with  bounded density $f$ satisfying $\lim_{x\to\infty} f(x,y) = 0$ and $\lim_{y\to\infty} f(x,y)=0$ for all $x,y \in \R$ and
\[
	\int \int \sup_{0\le |\alpha| \le\varepsilon} |y \frac{\partial}{\partial x} f(x + \alpha y, y)| d x d y < \infty
\]
for some $\varepsilon > 0$. 
By $F_n$ we denote the distribution of $(X_i^{(n)},Y_i^{(n)})$. 
The density of $F_n$ is given by
\[ 
 f_n(x,y)=f(x-\frac{\Delta}{\sqrt{n}}y,y).
\]
}
\end{model}

\begin{theorem} \label{th:localpower}
If $(X_i,Y_i)_{i\geq 1}$ follows Model \ref{mod:la}, we have
\[
 \hat{T}_{\tau,n} \cid \sup_{0\leq \lambda \leq 1} \left|2\sigma_\tau B(\lambda)+\Delta (\phi_{\lambda^\ast}(\lambda) -\lambda\phi_{\lambda^\ast}(1)) \right|,
\]
where $\sigma_\tau$ is defined in (\ref{eq:lrv.tau}) and the function $\phi_{\lambda^\ast}:[0,1]\rightarrow \R$ is defined as 
\[
  \phi_{\lambda^\ast}(s)=\left\{ 
  \begin{array}{ll}
   4 \left( \int_{-\infty}^\infty \big(\int_{-\infty}^\infty y (2F(y |x)-1) F(dy|x)    \big) f_X^2(x) dx\right) (s-\lambda^\ast)  & s\geq \lambda^\ast, \\
   0 & s\leq \lambda^\ast.
  \end{array}
  \right.
\]
\end{theorem}

From Theorem \ref{th:localpower} we can conclude the consistency of the test against local alternatives of the type studied in Model 
\ref{mod:la}, for which it suffices to observe that the integral occurring in the limit in Theorem \ref{th:localpower} is non-zero: For any absolutely continuous distribution function $F$ with density $f$, and with finite expectation, we have
\begin{eqnarray*}
\int_{-\infty}^\infty y(2\, F(y) -1) f(y) dy  &=& \int_{-\infty}^\infty 2 y F(y) f(y) dy - \int_{-\infty}^\infty y f(y) dy \\
 &=& \int y dF^2(y) -\int y dF(y)  \ = \ E\left( \max(Y_1,Y_2)\right) -E(Y_1),
\end{eqnarray*}
where $Y_1,Y_2$ are independent random variables with distribution $F$.  Now, $E\left( \max(Y_1,Y_2)\right) -E(Y_1)>0$,
unless $Y_1$ is a constant, in which case $E\left( \max(Y_1,Y_2)\right) -E(Y_1)=0$. 
Thus, the inner integral $\int_{-\infty}^\infty y(2F(y|x)-1)F(dy|x)$ is positive, unless the conditional distribution of $Y_1$ given $X_1=x$ is degenerate, i.e. $Y_1$ takes only one value. Hence the integral 
\[
  \int_{-\infty}^\infty \int_{-\infty}^\infty y(2F(y|x)-1)F(dy|x) f_X^2(x) dx
\]
is positive unless $Y_1$ is a deterministic function of $X_1$.


 \section{Previous Proposals}
\label{sec:props} 

\citet*{wied:kraemer:dehling:2011} consider the test statistic
\[
	\hat{T}_{\varrho,n} = \max_{1 \le k \le n} \frac{k}{\sqrt{n}} | \hat{\varrho}_k - \hat{\varrho}_n|
\]
based on Pearson's linear correlation coefficient $\hat{\varrho}_n$.
%
This test 
will serve as the main benchmark method for our test. Our motivation for using Kendall's tau is the wish to efficiently detect structural changes in arbitrarily heavy-tailed and potentially contaminated data. Both tests are constructed in a similar way. The differences between the two tests are largely due to the different properties of the estimators $\hat{\varrho}_k$ and $\hat{\tau}_k$. It is therefore worthwhile to have a brief look at these two correlation measures.

Kendall's tau is invariant with respect to strictly monotonic, componentwise transformations of $F$.
For continuous $F$, it can be written as $\tau = 4 E F(X,Y) - 1$, which can be seen to be a function of the copula of $F$ alone. The same applies to its asymptotic variance at i.i.d.\ data, $ASV(\hat{\tau}_n) = 16 \Var(\psi(X,Y))$ with $\psi$ being defined in Section \ref{sec:main}.  For further details see, e.g., \citet[][Chap.~5]{nelsen:2006}.
Consequently, no matter how heavy the tails of the distribution $F$ are, as long as the marginals are joined by a Gauss copula, the variance of $\hat{\tau}_n$ is the same as in the normal model.

In the normal model there is a one-to-one correspondence between $\tau$ and Pearson's moment correlation $\varrho$, which is given by
\be \label{eq:trafo}
	\tau = (2/\pi)\arcsin(\varrho), \qquad -1 \le \varrho \le 1.
\ee
Thus by letting $\hat{\varrho}_{\tau,n} = \sin(\pi (\hat{\tau}_n - 1/2))$,
the estimators $\hat{\varrho}_n$ and $\hat{\varrho}_{\tau,n}$ are, under normality, both Fisher-consistent for the same quantity $\varrho$. Comparing their asymptotic variances 
\be \label{eq:asv.normal}
	 ASV(\hat{\varrho}_n) = (1 - \varrho^2)^2,
\qquad	
ASV(\hat{\varrho}_{\tau,n}) =  (1-\varrho^2)(\pi^2/9 - 4\arcsin^2(\varrho/2))
\ee
\citep[e.g.][]{croux:dehon:2010},
allows a prognosis concerning the efficiency relation of the corresponding change-point tests. For example, for two independent random variables we have $ASV(\hat{\varrho}_n) = 1$ and $ASV(\hat{\varrho}_{\tau,n}) = \pi^2/9 = 1.097$.

The scope of the identity (\ref{eq:trafo}) extends to all elliptical distributions with finite second moments. \citep[e.g.][p.~97]{mcneil:frey:embrechts:2005}. 
For example, at the two-dimensional elliptical $t_{\nu}$-distribution with $\nu$ degrees of freedom and uncorrelated margins, we have
\[
	 ASV(\hat{\varrho}_n) = (\nu-2)/(\nu-4), \qquad \nu > 4,
\]\[
	 ASV(\hat{\varrho}_{t(\nu),n}) = (\nu+4)/(\nu+2), \qquad \nu > 0,
\]
where $\hat{\varrho}_{t(\nu),n}$ is the maximum likelihood estimator of $\varrho$ at the two-dimensional $t_{\nu}$ family. \citep[e.g.][p.~221]{bilodeau:brenner:1999}. 
\citet{dengler:2010} gives values for the asymptotic variance of $\hat{\varrho}_{\tau,n}$ at uncorrelated $t_{\nu}$-distributions. It is a decreasing function of $\nu$, it equals 1.922 and 1.296 for $\nu = 1$ and $\nu = 5$, respectively, and is smaller than $ASV(\hat{\varrho}_n)$ for $\nu \le 16$. We further take note of the remarkable fact that for all uncorrelated $t$- and normal distributions, the asymptotic relative efficiency of Kendall's tau with respect to the respective MLE is above 90\% for $\nu \ge 2$. It is more than 99\% at an uncorrelated $t_{5}$ distribution.

The other popular nonparametric correlation measure, Spearman's rho, which is often considered alongside Kendall's tau, is defined as Pearson's linear correlation of the ranks of the data. It can be written as
\[
	\hat{r}_n = \frac{12}{(n-1)n(n+1)} \sum_{i=1}^{n} R_n(X_i) R_n(Y_i) \ - \ 3\,\frac{n+1}{n-1},
\]
where $R_n(X_i)$ denotes the rank of the $i$th observation $X_i$ among $X_1,\ldots,X_n$, likewise $R_n(Y_i)$.
The population version of Spearman's rho, $s  =  12 E\left(F_X(X) F_Y(Y)\right) - 3$,
is also a function of the copula. Generally, Kendall's tau and Spearman's rho have similar statistical properties. See, e.g., \citet[][Chap.~5]{nelsen:2006} 
for details on their relationship. 
\citet{croux:dehon:2010} 
compare both with respect to robustness and efficiency and arrive at the conclusion, that in both respects their performance is comparable, but Kendall's tau is slightly favorable. \citet*{wied:dehling:vankampen:vogel:2013} propose a nonparametric, robust change-point test for constant correlation for strongly mixing sequences that is closely related to Spearman's rho. They consider the test statistic
\be \label{eq:s_k}
		\hat{T}_{s,n} = \max_{1 \le k \le n} \frac{k}{\sqrt{n}} | \hat{s}_k - \hat{s}_n|,
\quad
\mbox{where }
\quad
	 \hat{s}_k = 12 n^{-3} \sum_{i=1}^k R_n(X_i) R_n(Y_i)  - 3 - 12/n, \quad 1 \le k \le n,
\ee
along with a suitable long-run variance estimator in the same vein as $\hat{\sigma}^2_{\tau,n}$ in (\ref{eq:D.hat}) above.
The proof of its convergence is based on an invariance principle for the multivariate sequential empirical process. 
Despite the mentioned practical parity of Kendall's tau and Spearman's rho, this test has a low efficiency compared to our Kendall's tau based test (see Section \ref{sec:sim.res}). The reason lies in the usage of $R_n(\cdot)$ instead of $R_k(\cdot)$ in (\ref{eq:s_k}). Recently, \citet{kojadinovic:quessy:rohmer:2016} proposed a new constancy test for Spearman's rho which fixes this drawback and has therefore considerably higher power.
Spearman's rho is asymptotically equivalent to a $U$-statistic of order 3, and an asymptotic analysis of the related test statistic
\[
		\hat{T}_{r,n} = \max_{1 \le k \le n} \frac{k}{\sqrt{n}} | \hat{r}_k - \hat{r}_n|
\]
by means of $U$-statistics theory is mathematically much more involved. Since Spearman's rho, on the other hand, exhibits no pronounced advantage over Kendall's tau, we do not pursue this further here. Finally, we note that both estimators require a comparable computing effort. Both can be computed in $O(n \log n)$ time. Simple algorithms to compute the test statistics require $O(n^2)$.


 \section{Simulation Results}
\label{sec:sim.res}

In this section we give some numerical results, comparing the performance of the test to the previous proposals by \citet{wied:kraemer:dehling:2011} and \citet{wied:dehling:vankampen:vogel:2013}. These are based on the test statistics $\hat{T}_{\varrho,n}$ and $\hat{T}_{s,n}$, respectively (cf.~Section~\ref{sec:props}) and referred to as \emph{Pearson test} and \emph{Spearman test} in the following. We call the new proposal \emph{Kendall test}. Moreover, we consider the \emph{improved Spearman test} by \citet{kojadinovic:quessy:rohmer:2016}.

Throughout, we estimate the long-run variance $\sigma^2_\tau$ of the Kendall test by the estimator $\hat{\sigma}^2_{\tau,n}$ given by (\ref{eq:D.hat}), where we choose the bandwidth $b_n = \lfloor 2 n^{1/3}\rfloor$ and $\kappa$ to be the quartic kernel
\[
	\kappa(x) = (1-x^2)^2  \bEins_{[-1,1]}(x).
\]
Altogether we found neither the choice of the kernel nor the bandwidth to be very critical. Generally, choosing the bandwidth too small in strong dependence scenario tends to have a bigger impact than choosing it too large in the case of little or no dependence. 
We recommend the quartic kernel since it is smooth and has more of a flat-top-like behavior than the popular Bartlett kernel, giving more weight to small-lag autocorrelations. The variance estimation for $\hat{T}_{\varrho,n}$ and $\hat{T}_{s,n}$ is done according to the authors' proposals, which both also implement kernel estimators following \citet{dejong:2000}. 
For the improved Spearman test by \citet{kojadinovic:quessy:rohmer:2016}, we use implementation in the R-package npcp \citep{R:npcp}
with the Bartlett kernel for the long-run variance estimation. 

For all tests and all dependence scenarios, we take the bandwidth $b_n = \lfloor 2 n^{1/3} \rfloor$.
This is a relatively large bandwidth, which is suitable for strong dependence as in our AR(1) example below.
A smaller bandwidth would be more appropriate for no or little serial dependence. 
The problem of selecting an optimal, data-adaptive bandwidth is a ubiquitous one. It affects the long-run variance estimation of all change-point tests in a similar way and is analogous to the optimal blocklength selection problem for subsampling or bootstrapping, which may also be employed to obtain critical values. 
We do not discuss the problem of automatic bandwidth selection in the present article. Our concern is how the use of different estimators affects the behavior of the change-point tests. The methods under consideration use the same technique for long-run variance estimation and, hence, choosing the same bandwidth in all cases allows a fair comparison of the methods. 
However, any bandwidth selection procedure previously proposed, see, e.g., \citet[][Section 3.3]{kojadinovic:quessy:rohmer:2016} and the references therein, can also be put to use for the Kendall test and is likely to further improve the power of the test as compared to the results presented below. 

We consider three data models that implement three different types of serial dependence: independent observations, a multivariate AR(1) process and a constant conditional correlation (CCC) GARCH(1,1) process, as considered, e.g., in \citet{aue:2009}. All three models are based on a sequence of independent and identically distributed innovations $(\delta_i,\varepsilon_i)$, $i \in \Z$, having a bivariate, centered elliptical distribution $\Ee_2(\bNull,S)$ with 
\[ 	 S = 
	 \begin{pmatrix}
		 1 & \varrho \\
		 \varrho & 1 
	 \end{pmatrix},
\]
where the parameter $|\varrho| \le 1$ is equal to the usual moment correlation if the second moments of $(\delta_1,\varepsilon_1)$ are finite.
\begin{model} \label{mod:1} \rm
$\displaystyle  \begin{pmatrix}
	X_i \\
	Y_i \\
	\end{pmatrix}
	=
	\begin{pmatrix}
		\delta_i \\
		\varepsilon_i \\
		\end{pmatrix},
		\qquad i \in \Z,$
\end{model}
\begin{model} \label{mod:2} \rm 
The series $((X_i,Y_i))_{i \in \Z}$ follows the AR(1) process
\[
	\begin{pmatrix}
	X_i \\
	Y_i \\
	\end{pmatrix} 
	= 0.8 
		\begin{pmatrix}
	X_{i-1} \\
	Y_{i-1} \\
	\end{pmatrix} + 
	\begin{pmatrix}
		\delta_i \\
		\varepsilon_i \\
		\end{pmatrix},
		\qquad i \in \Z,
\]
with AR parameter $\varphi = 0.8$.
\end{model}
\begin{model} \label{mod:3} \rm 
The series $((X_i,Y_i))_{i \in \Z}$ follows the CCC-GARCH(1,1) process
\[
	\begin{pmatrix}
	X_i \\
	Y_i \\
	\end{pmatrix}
	= 
	\begin{pmatrix}
		\sigma_{i} \\
		\tau_{i} \\
	\end{pmatrix} 
	\circ 
	\begin{pmatrix}
		\delta_i \\
		\varepsilon_i \\
		\end{pmatrix}
	\ \ \mbox{ with } \ \ 
		\begin{pmatrix}
		\sigma_{i}^2 \\
		\tau_{i}^2 \\
	\end{pmatrix} 
	=
		\begin{pmatrix}
		0.1 \\
		0.1 \\
	\end{pmatrix} 
	+
		\begin{pmatrix}
		0.1 \\
		0.1 \\
	\end{pmatrix} 
	\circ
	\begin{pmatrix}
		X_{i-1}^2 \\
		Y_{i-1}^2 \\
	\end{pmatrix} 
			+
		\begin{pmatrix}
		0.84 \\
		0.84 \\
	\end{pmatrix} 
	\circ
	\begin{pmatrix}
		\sigma_{i-1}^2 \\
		\tau_{i-1}^2 \\
	\end{pmatrix},  			
		\quad i \in \Z,
\]
where $\circ$ denotes the Hadamard product, i.e.\ component-wise vector multiplication.
\end{model}

Most simulation results below are for $n = 500$. At this sample size, the distribution of the test statistic $\hat{T}_{\tau,n}/(2\hat\sigma_{\tau,n})$ is well approximated by its limit distribution under the null hypothesis in all dependence scenarios considered.
For the first half of the data, we sample independent realizations $(\delta_i,\varepsilon_i)$, $i = 1,\dots,250$, with correlation parameter $\varrho_1 = 0.4$. For the second half of the data, we use the correlation parameter $\varrho_2$, for which we allow the values $0.4$ (null hypothesis), $0.6$, $0.8$, $0.2$, $0$, $-0.2$, $-0.4$.
Thus in Model \ref{mod:1}, we have a constant correlation of 0.4 at the beginning and then a sudden jump, whereas in Models \ref{mod:2} and \ref{mod:3}, there is a gradual but quick change in the correlation of the observed process $(X_i,Y_i)$. 
Also note that, under the null, the data process $((X_i,Y_i))_{i \in \Z}$ has the same marginal correlation as the innovation process $((\delta_i,\varepsilon_i))_{i \in \Z}$ in Models \ref{mod:1} and \ref{mod:2}, but generally not in Model \ref{mod:3}. 

In Models \ref{mod:1} and \ref{mod:2}, we consider five different elliptical distributions for $(\delta_i,\varepsilon_i)$: the bivariate normal distribution and bivariate $t_\nu$-distribution with $\nu = 20, 5, 3, 1$ The parameter $\nu > 0$ is called the \emph{degrees of freedom} or the \emph{tail index}. The $t_{20}$ distribution has slightly heavier tails than the normal, whereas $t_5$, $t_3$ and $t_1$ serve as examples of very heavy-tailed distributions. The $t_{\nu}$ distribution possesses finite moments of order $\alpha$ for any $\alpha < \nu$. Thus, the Pearson test by \citet{wied:kraemer:dehling:2011}, which requires finite fourth moments, does not work for $\nu =3$ and $\nu =1$. 
From an economic point of view, however, the tail indices 3 and 1 are most interesting. There is evidence that financial returns on many stocks, stock indices and foreign exchange rates in developed economies typically have tail indices $\nu$ in the interval $\nu \in (2,4)$ and thus have finite variances but infinite fourth moments. It has emerged that $\nu=3$ is an appropriate choice for financial returns in developed markets and \citet{ibragimov:ibragimov:kattuman:2013} provide empirical evidence that $\nu$ may be even smaller than $2$ for foreign exchange rates in emerging economies. 

In Model \ref{mod:3}, we used the normal distribution and $t_\nu$ distributions with $\nu = 20, 8, 5$. The CCC-GARCH model generates heavy tails also for normal innovations. For $t_\nu$ innovations with $\nu = 3, 1$, the process explodes. For $\nu = 5$, the CCC-GARCH process of Model \ref{mod:3} generates very pronounced volatility clusters.

For each combination of model, jump height and marginal distribution we generate 1000 samples and compute the three test statistics from each sample. The observed rejection frequencies at the significance level $.05$ for sample size $n=500$ are given in Tables \ref{tab:1}, \ref{tab:2} and \ref{tab:3} for Models \ref{mod:1}, \ref{mod:2} and \ref{mod:3}, respectively.
\begin{table}[t]
\caption{Efficiency comparison of several correlation change-point tests under Model \ref{mod:1}. Different marginal distributions, 500 observations, different jump sizes in the middle of the sample. Empirical rejection frequencies at the asymptotic .05 level based on 1000 repetitions.}
%
\begin{tabular}{c@{\quad}r@{\qquad}r@{\quad \ }r@{\quad \ }r@{\quad \ }r@{\quad \ }r@{\quad \ }r@{\quad \ }r}
\multicolumn{2}{r@{\qquad}}{ Change at $n/2$: } & none & -.2 & +.2 & -.4 & +.4 & -.6 & -.8  \\
\hline\\[-1.0ex]
Distribution & Test & & & & & & \\
\hline\\[-1.0ex]
	normal 		& Pearson 							& .04	& .46 	& .70 	& .97  	& 1.00  &  1.00 &  1.00 \\
				 		& Spearman  						& .04 & .06		& .07		& .22 	&  .20 	&   .47	&   .78	\\
						& improved Spearman  		& .03 & .43 	& .58 	& .95 & 1.00	&  1.00 &  1.00 \\ 
				 		& Kendall 							& .05 & .44		& .65		& .96 	& 1.00  &  1.00 &  1.00 \\[.6ex]
	$t_{20}$ 	& Pearson 							& .04	& .42		& .65 	& .97 	&  1.00 & 1.00  & 1.00  \\
					 	& Spearman  						& .03	& .07		& .08		& .22		&   .20	&  .47  &  .77 	\\
						& improved Spearman  		& .02 & .42 	& .56 	& .94 	&  1.00 & 1.00 	& 1.00 \\ 
					 	& Kendall 							& .04	& .46 	& .63		& .97 	&  1.00 & 1.00  & 1.00 	\\[.6ex]
	$t_{5}$  	& Pearson 							& .04 & .24		& .41		& .73		&		.95 &	 .95	&  .98	\\
					 	& Spearman  						& .04	& .08		& .08		& .22		&  	.20	&  .46	&  .76	\\
						& improved Spearman  		& .03 & .38 	& .50 	& .91 	&  1.00 & 1.00 	& 1.00 \\ 
					 	& Kendall 							& .04 & .41 	& .55  	& .95  	&  1.00 & 1.00  & 1.00  \\[.6ex]
		$t_{3}$	& Pearson 							& .06 & .14		& .25		& .39		&  	.69 &  .64  &  .79	\\
					 	& Spearman  						& .03	& .08		& .08		& .21		&  	.18	&  .43	&	 .72	\\
						& improved Spearman  		& .04 & .32 	& .46 	& .88 	&  1.00 & 1.00 	&   1.00 \\ 
					 	& Kendall 							& .03	& .39 	& .52 	& .91 	&  1.00 & 1.00 	&	1.00  \\[.6ex]
 $t_{1}$		& Pearson 							& .47	&	.48		& .50		&	.49 	&   .56	&  .52	&  .51 	\\
					 	& Spearman  						& .03	& .06		& .07		& .17		&   .17 &	 .38	&  .63 	\\
						& improved Spearman  		& .03 & .25 	& .33 	& .74 	&   .96 &  .98 	& 1.00 \\ 
					 	& Kendall 							& .04	& .29 	& .38 	& .83 	&   .98 & 1.00  & 1.00 	\\[.6ex]		 
\end{tabular} \label{tab:1}
\end{table}
At Table \ref{tab:1} (independence scenario) we note the following.
\begin{enumerate}[\it(1)]
\item 
The Pearson test is slightly better than the Kendall test for the normal distribution. Both tests lose power with increasing tails, but the loss is much smaller for the Kendall test. For the $t_{20}$ distribution, the results are comparable. The Kendall test is clearly better for heavier tails. These observations are fully in line with our expectations considering the efficiency comparison of the respective correlation measures in Section \ref{sec:props}.
\item
Throughout, the naive Spearman test has a very low power, 
which is not true for the improved Spearman test. The improved Spearman test and Kendall test show comparable results under normality as well as heavy tails, with advantages for the Kendall test. This is in line with the efficiency comparison of both estimators by \citet{croux:dehon:2010}.  

\item
For the $t_3$ distribution, the Pearson test yields still approximate results, whereas for the $t_1$ distribution it is completely useless.
\een
\begin{table}[t]
\caption{Efficiency comparison of several correlation change-point tests. $((X_i,Y_i))_{i=1,\ldots,n}$ AR(1) process with AR-parameter $\varphi = 0.8$. Different innovation distributions, 500 observations, several alternatives. Empirical rejection frequencies at the asymptotic .05 level based on 1000 repetitions.}
%
\begin{tabular}{c@{\quad}r@{\qquad}r@{\quad \ }r@{\quad \ }r@{\quad \ }r@{\quad \ }r@{\quad \ }r@{\quad \ }r}
\multicolumn{2}{r@{\qquad}}{ Change at $n/2$: } & none & -.2 & +.2 & -.4 & +.4 & -.6 & -.8  \\
\hline\\[-1.0ex]
Distribution & Test & & & & & & \\
\hline\\[-1.0ex]
	normal 		& Pearson 					& .07	& .13 	& .27		& .45 	& .77 & .78 & .94 \\
				 		& Spearman  				& .05 & .06		& .04		& .07 	& .07 & .11	& .18	\\
						& improved Spearman & .03 & .11 	& .15 	& .39 	& .73 & .73 & .95 \\ 
				 		& Kendall 					& .05 & .14		& .19		& .46 	& .73 & .79 & .96	\\[.6ex]
	$t_{20}$ 	& Pearson 					& .06	& .11		& .31		& .41   & .77	& .77 & .94 \\
					 	& Spearman  				& .05	& .06		& .06		& .08		& .08	& .12 & .20 \\
						& improved Spearman & .03 & .12 	& .15 	& .36 	& .70 & .70 & .95 \\ 
					 	& Kendall 					& .04	& .13		& .23		& .42		& .74 & .79 & .95	\\[.6ex]
	$t_5$  		& Pearson 					& .08 & .12		& .26 	& .34		& .67	&	.67	& .89	\\
					 	& Spearman  				& .06	& .05		& .06		& .08		& .09	& .12	& .18	\\
						& improved Spearman & .03 & .13 	& .15 	& .35 	& .67 & .68 & .90 \\ 
					 	& Kendall 					& .05 & .15		& .19  	& .40 	& .67	& .73 & .95 \\[.6ex]
		$t_3$		& Pearson 					& .10 & .11		& .26		& .25		& .56 & .50 & .67	\\
					 	& Spearman  				& .06	& .08		& .06		& .08		& .09	& .09	&	.17	\\
						& improved Spearman & .05 & .09 	&   .12 &   .28 & .58 & .58 & .84 \\ 
					 	& Kendall 					& .05	& .12		& .18 	& .34		& .62 & .67	&	.90 \\[.6ex]
 $ t_1$			&  Pearson 					& .46	& .49		& .52		& .50 	& .56	& .50	& .54 	\\
					 	& Spearman  				& .08	& .07		& .08		& .09		& .11 &	.12	& .14 	\\
						& improved Spearman & .03 & .05 	& .06 	& .13 	& .20 & .24 & .41 \\ 
					 	& Kendall 					& .06	& .12		& .11		& .18		& .34	& .34 & .53	\\[.6ex]		 
\end{tabular} \label{tab:2}
\end{table}
Analyzing Table \ref{tab:2} (AR scenario) we find that
\begin{enumerate}[\it(1)]
\setcounter{enumi}{3}
\item
the power of all tests is lower for the AR(1) process than in the independent case, and
\item
the observations made at Table \ref{tab:1} concerning the comparison of the tests generally also apply here. 
The Kendall test is slightly better than the improved Spearman test.
\item
For normal, $t_{20}$, $t_5$ and $t_3$ innovations, the performance of the Kendall, the improved Spearman and the Pearson test are rather similar. The effect of the heavy tails is less pronounced than in the independent case. 
This is not entirely surprising. In Model \ref{mod:2}, the marginal distribution of the process can be expressed as a 
sum of independent random variables. Although it is generally not normal, it is, purely heuristically speaking, closer to a normal distribution than the innovations (if these possess finite second moments). 
\een
\begin{table}[t]
\caption{Efficiency comparison of several correlation change-point tests. $((X_i,Y_i))_{i=1,\ldots,n}$ multivariate GARCH process. Different innovation distributions, 500 observations, several alternatives. Empirical rejection frequencies at the asymptotic .05 level based on 1000 repetitions.}
%
\begin{tabular}{c@{\quad}r@{\qquad}r@{\quad \ }r@{\quad \ }r@{\quad \ }r@{\quad \ }r@{\quad \ }r@{\quad \ }r}
\multicolumn{2}{r@{\qquad}}{ Change at $n/2$: } & none & -.2 & +.2 & -.4 & +.4 & -.6 & -.8  \\
\hline\\[-1.0ex]
Distribution & Test & & & & & & \\
\hline\\[-1.0ex]
	normal 		& Pearson 					& .06 &  .47 &  .63 &  .94 &  .99 & 1.00 & 1.00 \\
				 		& Spearman  				& .04 &  .07 &  .09 &  .22 &  .17 &  .47 &  .76 \\
						& improved Spearman & .03 &  .43 &  .56 &  .96 & 1.00 & 1.00 & 1.00 \\ 
				 		& Kendall 	  			& .04 &  .45 &  .59 &  .96 & 1.00 & 1.00 & 1.00 \\[.6ex]
	$t_{20}$ 	& Pearson 					& .07 &  .46 &  .55 &  .90 &  .95 &  .98 & 1.00 \\
					 	& Spearman  				& .04 &  .07 &  .07 &  .22 &  .19 &  .46 &  .78 \\
						& improved Spearman & .02 &  .41 &  .51 &  .94 & 1.00 & 1.00 & 1.00 \\ 
					 	& Kendall 					& .05 &  .46 &  .57 &  .96 & 1.00 & 1.00 & 1.00 \\ [.6ex]
	$t_8$  		& Pearson 					& .13 &  .42 &  .42 &  .76 &  .78 &  .90 &  .94 \\
					 	& Spearman  				& .05 &  .07 &  .08 &  .21 &  .19 &  .42 &  .72 \\ 
						& improved Spearman & .03 &  .38 &  .50 &  .90 & 1.00 & 1.00 & 1.00 \\ 
					 	& Kendall 					& .04 &  .45 &  .52 &  .94 &  .99 & 1.00 & 1.00 \\[.6ex]
		$t_5$		& Pearson 					& .30 &  .44 &  .36 &  .66 &  .62 &  .75 &  .81 \\
					 	& Spearman  				& .06 &  .09 &  .10 &  .20 &  .23 &  .41 &  .69 \\
						& improved Spearman & .03 &  .34 &  .39 &  .88 &  .99 & 1.00 & 1.00 \\ 
					 	& Kendall 					& .07 &  .40 &  .41 &  .89 &  .94 &  .99 &  .99	 \\[.6ex]						
\end{tabular} \label{tab:3}
\end{table}
At Table \ref{tab:3} (CCC-GARCH scenario) we observe that 
\begin{enumerate}[\it(1)]
\setcounter{enumi}{6}
\item
the efficiencies of the test are comparable to the independence case. GARCH processes are white noise with zero autocorrelations. Serial dependence in the second-order characteristics appear to influence the tests less.
\item
The Pearson test has difficulties keeping the size. A size-adjusted power comparison shows a strict superiority of the Kendall test over the Pearson test.
\een

\begin{table}[t]
\caption{Efficiency comparison of several correlation change-point tests at Model \ref{mod:3} (multivariate GARCH).
Innovation correlation changes from $\varrho_1 = 0.4$ to $\varrho_2=0$, normal innovations.
Different sample sizes, different jump locations. Empirical rejection frequencies at the asymptotic .05 level based on 1000 repetitions.}
%
\begin{tabular}{c@{\quad}r@{\qquad}r@{\quad \ }r@{\quad \ }r@{\quad \ }r@{\quad \ }r}
\multicolumn{2}{r@{\qquad}}{ Change at : } & none & $n/8$  & $n/4$ & $3n/8$ & $n/2$   \\
\hline\\[-1.0ex]
Sample size & Test & & & & &  \\
\hline\\[-1.0ex]
	$n=250$ 	& Pearson 					&  .06 & 	 .12 &  .35 &  .59 	&  .64 \\
				 		& Spearman  				&  .03 & 	 .04 &  .07 &  .10 	&  .10 \\ 
						& improved Spearman &  .02 &   .04 &  .20 &  .47 	&  .58 \\  
				 		& Kendall 	  			&  .04 & 	 .11 &  .38 &  .60 	&  .69 \\[.6ex]
	$n=500$ 	& Pearson 					&  .06 & 	 .22 &  .76 &  .92 	&  .94 \\ 
					 	& Spearman  				&  .04 & 	 .06 &  .11 &  .15 	&  .22 \\ 
						& improved Spearman &  .03 &   .12 &  .68 &  .91 	&  .96 \\ 
					 	& Kendall 					&  .04 & 	 .22 &  .79 &  .94 	&  .96 \\[.6ex]			
	$n=1000$  & Pearson 					&  .06 & 	 .55 &  .99 & 1.00 	& 1.00 \\ 
					 	& Spearman  				&  .05 &   .10 &  .26 &  .40 	&  .43 \\ 
						& improved Spearman &  .03 &   .44 &  .99 & 1.00 	& 1.00 \\ 
					 	& Kendall 					&  .04 & 	 .56 & 1.00 & 1.00 	& 1.00 \\ [.6ex]
\end{tabular} \label{tab:4}
\end{table}
To give an impression of the power of the test in other data situations, we also include some limited simulation results for sample sizes $n = 250, 500, 1000$ and change locations ranging from $n/8$ to $n/2$. The data generating process is CCC-GARCH (Model \ref{mod:3}) with normal innovations and $\rho_1 = 0.4$ and $\rho_2 = 0$. The parameters for the long-run variance estimation are as before. 
The results are summarized in Table \ref{tab:4}. We find the general picture mediated by Table \ref{tab:3} concerning the comparison of the tests  confirmed. The power of all change-point tests is lower for changes occurring nearer to either end of the data sequence.

Altogether the simulation results are favorable for the Kendall test. It is clearly superior  to the Pearson test at heavy-tailed data and qualitatively non-inferior under normality. It is non-inferior to the improved Spearman test in all scenarios considered.
\citet{kojadinovic:quessy:rohmer:2016} also propose a bootstrapping procedure to obtain critical values for the improved Spearman test, which their simulation results suggest improves the power slightly. A similar procedure applicable to the Kendall test has been proposed recently by \citet{buecher:kojadinovic:2016}.

The above data generating processes are all based on elliptical innovations. 
Any change in the generalized correlation coefficient $\varrho$ constitutes a shift of the same magnitude for all dependence measures considered, and the corresponding tests generally behave similarly. 
However, in data situations where a change occurs that affects the various dependence measures to a different degree, the tests may behave qualitatively differently. 
A class of examples where a change occurs in the Pearson correlation but neither in Kendall's tau nor Spearman's rho is generated by leaving the copula constant but changing the marginal distributions. 
Similarly, one may change the copula in such a way that Kendall's tau or Spearman's rho remain fixed. (A change-point test for the whole copula has recently been proposed by \citet{bucher:2014}). To illustrate this point, consider the following example: Suppose in the first half of the sequence that the observations are i.i.d., following the distribution of
\[ 		
	\begin{pmatrix}
	X \\
	Y \\
 \end{pmatrix}  
  =
  N_2\left(
			\begin{pmatrix}
			0 \\
			0 \\
		 \end{pmatrix}, 
		\begin{pmatrix}
		 1 & 0.6 \\
		 0.6 & 1 		 	 
	\end{pmatrix}		
	\right).
\]
In the second half, the observations are i.i.d.\ with distribution of $(X,Y^3|Y|/\sqrt{105})$. Thus, before and after the change, the data have mean zero, marginal variances one, and a Kendall's tau coefficient of $\tau = 2 \pi^{-1} \arcsin(0.6) \approx 0.41$. However, the Pearson correlation changes from 0.6 to $\sqrt{2/(105 \pi)}24/5 \approx 0.37$. Consequently, the Kendall test has no power against this change whereas the Pearson test does. 
If the distribution of the observations changes from that of $(X,Y)$ in the first half to that of $(U,V)$ in the second half, where $U = 2\sqrt{3}(\tilde{U}-1/2)$ and $V = 42\sqrt{3}/13(\tilde{U}^{13}-1/14)$ for $\tilde{U} \sim U[0,1]$, then, as before, mean and marginal variances remain the same, also the Pearson correlation remains at 0.6, but the Kendall's tau coefficient changes from 0.41 to 1. In this case, the Pearson test has no power, whereas the Kendall test has. In Table \ref{tab:inconsistency}, we give simulated rejection frequencies (at the 0.05 level) for the two examples for $n=500$ with the settings for the long-run variance estimation as before.
\begin{table}[t]
\caption{Empirical rejection frequencies at the asymptotic .05 level based on 1000 repetitions; Sample size $n = 500$.}
%
\begin{tabular}{r@{\quad}|@{\quad}r@{\qquad}r} 
															&  Kendall test & Pearson test \\
\hline\\[-1.0ex]
 constant copula example    	&   .23 	&   .94  \\
 constant correlation example &	 1.00   &   .07	\\
\end{tabular} \label{tab:inconsistency}
\end{table}
Generally, the differences in the dependence measures tend to be of comparable size in realistic data models and not as pronounced as in the artificial examples above.

 \section{Data Examples}
\label{sec:example}
\begin{figure}[ht]
\centering
\includegraphics[width=0.95\textwidth]{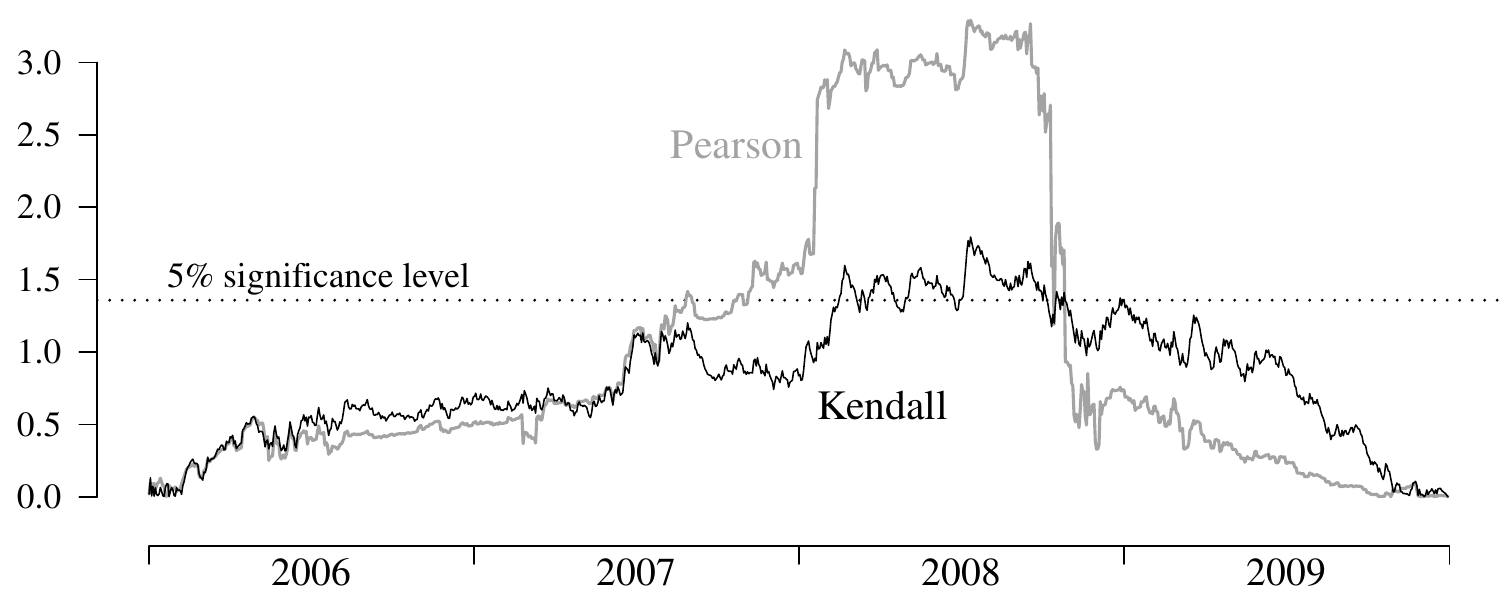}
\caption{
	Processes  $\frac{k}{\sqrt{n}} | \hat{r}_k - \hat{r}_n |$ (grey) and 
	$\frac{k}{\sqrt{n}} | \hat{\tau}_k - \hat{\tau}_n|$ (black), $k=1,\dots,n$, computed from 
	log returns of DAX and S\&P 500 between Jan 1, 2006, and Dec 31, 2009.
} \label{fig:3}
\end{figure}
\citet{wied:kraemer:dehling:2011} analyze the dependence between the German stock index (DAX) and the Standard and Poor's 500 (S\&P 500). We apply the Kendall test and the Pearson test (with the same parameter choices for the variance estimation as in the simulation section) to the daily log returns of the two financial indices in the years 2006 through 2009 (1043 observations). The second half of this period covers what has been termed the Global Financial Crisis. The processes $\big(\frac{k}{\sqrt{n}} | \hat{r}_k - \hat{r}_n | \big)_{k=1,\dots,n}$ and $\big(\frac{k}{\sqrt{n}} | \hat{\tau}_k - \hat{\tau}_n| \big)_{k=1,\dots,n}$ are depicted in Figure~\ref{fig:3}. Their maxima are the values of the test statistics of the Pearson and the Kendall test, respectively. Both tests give a p-value below 0.005, and both attain their maximum on July 14, 2008, at the height of the financial crisis. (Lehman Brothers filed for bankruptcy on September 14, 2008.) The tests behave similarly and their outcome supports the assumption that the dependence between both indices considered can not be assumed to be identical before and during the financial crisis of 2008. 

With a second set of data, the differences between both tests become apparent. We consider the Dow Jones Industrial Average and the Nasdaq Composite in the years 1987 and 1988 (Figure~\ref{fig:4}). The most notable feature of both time series is the heavy loss on October 19, 1987, commonly known as Black Monday. Here we may ask in particular the question if the market conditions substantially changed after this date. Does Black Monday constitute a break in the correlation between the two time series? The Pearson test reports a p-value indistinguishable from zero by machine accuracy.
The underlying processes of the Pearson and the Kendall test are shown in Figure~\ref{fig:5}. The outcome of the Pearson test is determined by the peak on October 19, 1987, which is explained as follows. On October 19, both indices suffered heavy losses, suggesting a strong positive correlation of their log returns. The following day the Dow Jones recovered to some small degree, whereas the Nasdaq experienced an even larger drop, suggesting strong negative correlation. Thus the process of successive sample correlations jumps up and immediately down again. 
\begin{figure}[t]
\centering
\includegraphics[width=0.95\textwidth]{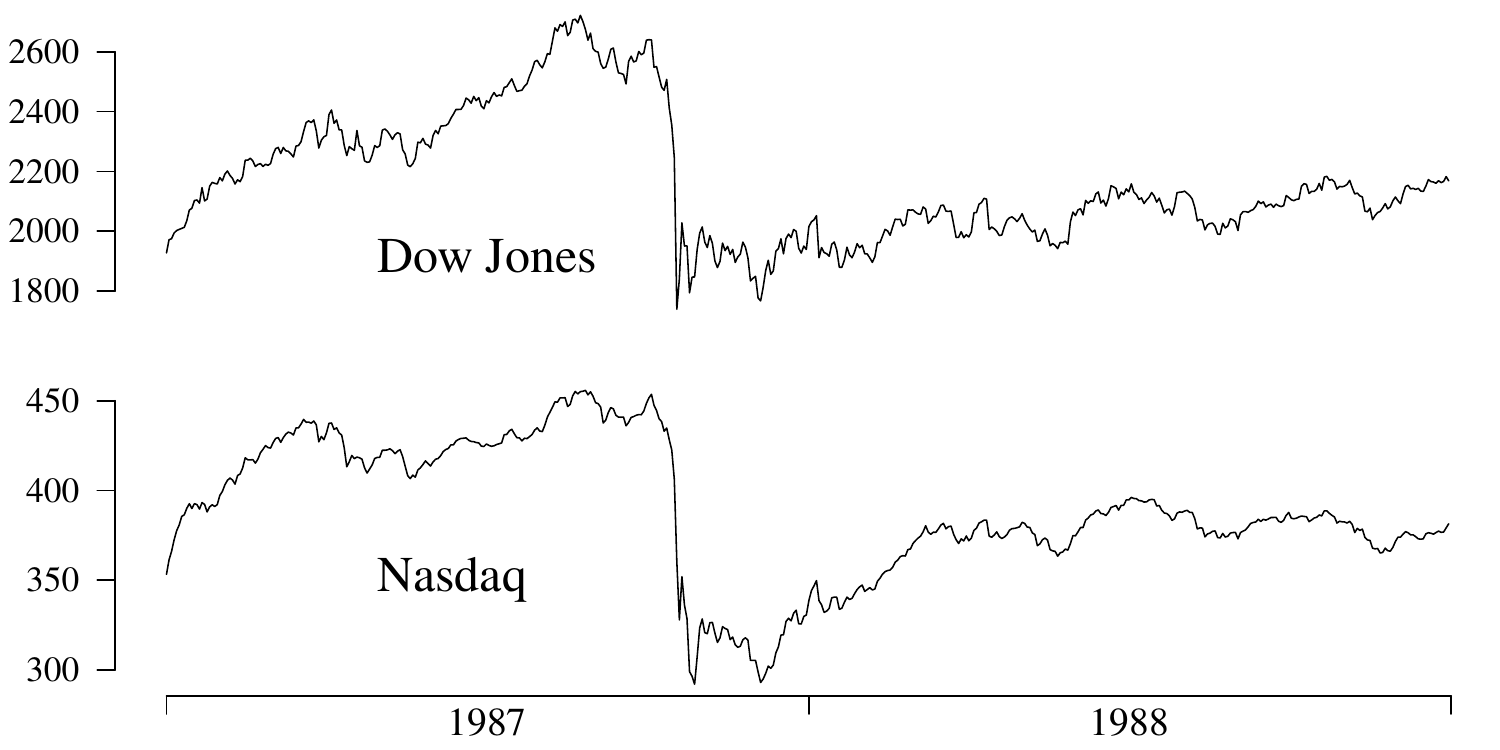}
\caption{
	Daily closings of Dow Jones Industrial Average and Nasdaq Composite from Jan 1, 1987, to Dec 31, 1988.
}
 \label{fig:4}
\end{figure}
\nopagebreak
\begin{figure}[t]
\centering
\includegraphics[width=0.95\textwidth]{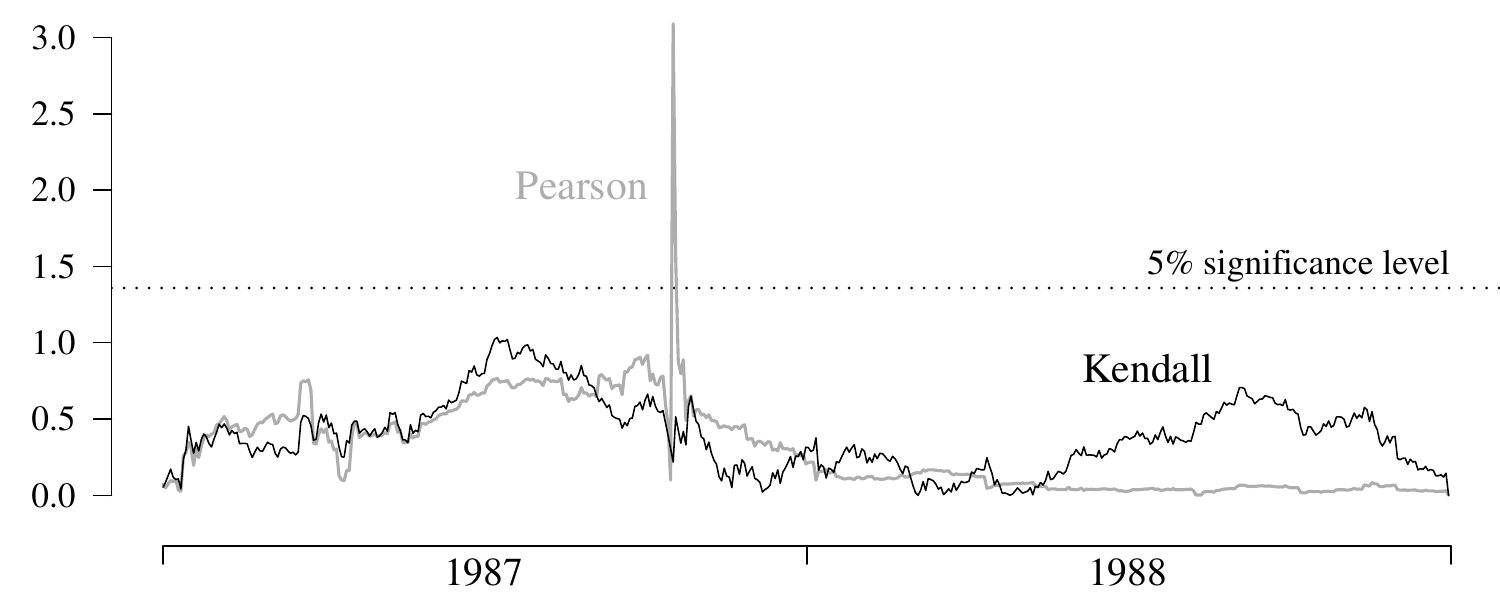}
\caption{
	Processes  $\frac{k}{\sqrt{n}} | \hat{r}_k - \hat{r}_n |$(grey) and 
	$\frac{k}{\sqrt{n}} | \hat{\tau}_k - \hat{\tau}_n|$ (black), $k=1,\dots,n$, computed from 
	log returns of DJIA and Nasdaq Composite between Jan 1, 1987, and Dec 31, 1988.
} \label{fig:5}
\end{figure}

The Kendall test gives a p-value of 0.24, indicating one can assume the correlation between the two time series to be constant over the observed time period. The empirical Kendall's tau is 0.52 prior to Black Monday, and 0.56 afterwards. Indeed, the market conditions turned out to be not much different from before, the DJIA even closed positive for 1987. 

The strong impact of a few or even a single extreme observation on the Pearson test is in line with \citet[][Ch.2]{stock:watson:2010}, who illustrate the inappropriateness of normal distribution assumptions and the necessity of considering heavy-tailed distributions in modeling financial time series such as the DJIA during the Black Monday crisis.

\section{Conclusion}

We have presented a fluctuation test for detecting changes in the dependence between two time series based on Kendall's rank correlation coefficient. We have demonstrated the non-inferiority of the test in terms of efficiency and the clear superiority in terms of robustness and applicability to a similar, previously proposed test, which is based on Pearson's moment correlation. To allow arbitrarily heavy-tailed data and very weak assumptions concerning the serial dependence, we have introduced the concept of near epoch dependence in probability. We have studied the asymptotic behavior of the test statistic under stationarity by means of limit theorems for $U$-statistics for weakly dependent, stationary processes.

We showed simulation results for various dependence scenarios with elliptical innovations. The elliptical model is of wide-spread use and its intrinsic symmetry allows a meaningful comparison of the two change-point tests. Outside ellipticity, where \emph{monotone} dependence (which is measured, e.g., by Kendall's $\tau$) and \emph{linear} dependence (which is measured by the Pearson correlation) are not necessarily equivalent, it is a matter of debate which of the two types of dependence is more relevant. A detailed discussion goes beyond the scope of this paper, but it appears that one is often interested in monotone dependence rather than linear dependence, and the prevalent use of the sample correlation to measure monotone dependence is presumably due to its simplicity and historical dominance.

Furthermore, simulations also show that the proposed test possesses a variety of advantageous features that have not been discussed in this paper. It has power against gradual or fluctuating changes in the correlation, not only sudden jumps, as presented in Section \ref{sec:sim.res}. It also exhibits a much better robustness against heteroscedasticity than the Pearson test. However, a thorough theoretical assessment of these properties as well as constructing tests that explicitly allow heteroscedasticity require the study of $U$-statistics at non-stationary sequences, which is mathematically rather involved, as our study of the one-change-point model \ref{mod:la} illustrates.

Further future research directions include, e.g., the extension to more than two dimensions or guidelines for an on-line application of the test with results about the detection time of a change. Another interesting research question, which is related to the one studied here, is to devise a robust test for detecting changes in the \emph{coherence} of two time series. For example, a series of i.i.d.\ variables, shifted by one observation, is highly coherent to the original series, but our test, which only compares observations at the same time point, does not detect that type of dependence.

\small
\appendix
\section*{Appendix}
The Appendix is split into three parts. Appendix \ref{sec:app:pned} provides background on the concept of $P$-near epoch dependence. 
In Appendix \ref{sec:app:local}, we study the asymptotic behavior of the $U$-statistic process under local alternatives, which provides the basis for the proof of Theorem \ref{th:localpower}. Appendix \ref{sec:app:proofs} contains all proofs of the results of the main text. Results labeled D.1, D.2, $\ldots$ refer to the online supplement (Appendix D), available at Cambridge Journals Online (journals.cambridge.org/ect).


\section{$P$-NED: Near epoch dependence in probability}
\label{sec:app:pned}

In this section we add some background on the newly introduced $P$-near epoch dependence and connect it to the usual $L_p$-near epoch dependence (Lemma \ref{lem:l_0}). Similar conditions that embody the idea of approximating functionals in a probability sense are $S$-mixing considered by \citet*{berkes:hoermann:schauer:2009} and the $L_0$-approximability
of \citet[][Chapter~6]{poetscher:prucha:1997}. 

%
First note that the $P$-NED condition (Definition \ref{def:dep} (ii)) is equivalent to convergence in probability of $f_k(\bZ_{-k},\ldots,\bZ_{k})$ to $\bX_0$ for $k \to \infty$. If the latter is true, i.e., if
\[
	\varepsilon_k = \inf
	\left\{ \varepsilon \phantom{\Big|} \middle| \,
		 P \left( \left|  \bX_0 - f_k(\bZ_{-k},\ldots,\bZ_{k})  \right|_1 > \varepsilon\right)  \le \varepsilon
	\right\} \ \to \ 0 \qquad (k \to \infty),
\] 
then setting $a_k = \varepsilon_k$ and 
\[
	\Phi(\varepsilon) = 
	\left( 
		\sup_{l \le k} \frac{P( |\bX_0 - f_l(\bZ_{-l},\ldots,\bZ_{l}) |_1 > \varepsilon_k)}{\varepsilon_l}
	\right) \vee 1
	\qquad \mbox{for } \varepsilon \in [\varepsilon_k,\varepsilon_{k-1})
\]
fulfills (\ref{eq:l_0}). The requirement of the bound on $P \left( \left|  \bX_0 - f_k(\bZ_{-k},\ldots,\bZ_{k})  \right|_1 > \varepsilon\right)$ in (\ref{eq:l_0}) to factorize into an $\varepsilon$-part and a $k$-part is not a restriction, but facilitates rate computations.

Recall that a process $(\bX_n)_{n \in \Z}$ is called \emph{$L_p$ near epoch dependent} ($L_p$-NED), $p \ge 1$, on the process $(\bZ_n)_{n \in\Z}$ if the approximating constants 
$a_{p,k} =  ( E \left|  \bX_0 - E(\bX_0| \F_{-k}^{k})  \right|_p^p )^{1/p}$,   $k \ge 1$,
converge to zero as $k \to \infty$.

Compared to $L_2$-NED, the $P$-NED condition substantially enlarges the class of processes for which the condition is easily checked by many heavy-tailed distributions. For example, the autoregressive process
\[
	\bX_n = \sum_{k=0}^\infty a^k \bZ_{n-k},
\] 
where $a\in(-1,1)$, and $(\bZ_n)_{n\in\Z}$ is an i.i.d.\ sequence of $\R^d$-valued random variables, can easily be seen to be $P$-NED on $(\bZ_n)_{n\in\Z}$ with $a_k=|a|^{\alpha k}$ and $\Phi(\varepsilon) = K\varepsilon^{-\alpha}$ for some constant $K>0$, as long as $P(|\bZ_n|_1\geq t) = O( t^{-\alpha})$ is satisfied. 
This is a very weak condition on the innovation distribution, that should be compared to analogous conditions for an AR(1) process to be mixing.
In particular, the existence of a density is not required, and all standard examples of discrete or heavy-tailed (e.g.~Pareto, Cauchy, geometric) distributions are permitted. 
Furthermore, $P$-NED substantially enlarges the class of Lipschitz functionals considered by \citet[][p.~74]{denker:keller:1986}. The above example would satisfy their condition only if $\bZ_n$ was bounded.
To further detail this example, let, e.g., $a = 1/2$ and $\bZ_i = (Z^{(1)}_i, Z^{(2)}_i)$ have the following discrete distribution
\[
	\textstyle
		P\left( (Z^{(1)}_i,Z^{(2)}_i) = (0,\frac{1}{2}) \right) \ =\ P\left( (Z^{(1)}_i,Z^{(2)}_i) = (\frac{1}{2},0) \right) \ =\ (1-\varrho)/4,
\]\[
		\textstyle
		P\left( (Z^{(1)}_i,Z^{(2)}_i) = (0,0) \right) \ =\ P\left( (Z^{(1)}_i,Z^{(2)}_i) = (\frac{1}{2},\frac{1}{2}) \right) \ =\ (1+\varrho)/4.
\]
The parameter $\varrho \in [-1,1]$ is the moment correlation of this distribution. The Kendall's tau coefficient of $(Z^{(1)}_i,Z^{(2)}_i)$ is $\tau = \varrho/2$. The process $(\bX_i)_{i\in\Z} = ((X^{(1)}_i,X^{(2)}_i))_{i\in\Z}$ is not strongly mixing (since $\bX_{i-1}$ is a deterministic function of $\bX_i$) 
and hence not absolutely regular, but  it is $P$-NED on $(\bZ_i)_{i \in \Z}$ with exponentially decreasing approximation coefficients. The distribution of $(X^{(1)}_i,X^{(2)}_i)$ has the same moment correlation $\varrho$ as $(Z^{(1)}_i,Z^{(2)}_i)$, its Kendall rank correlation is $\tau = 2 \varrho/ (3- \varrho^2)$, and the margins $X^{(1)}_i$ and $X^{(2)}_i$ are uniformly distributed on $(0,1)$.
Then consider the process $((\tilde{X}_i,\tilde{Y}_i))_{i \in \Z}$ with $\tilde{X}^{(1)}_i = H(X^{(1)}_i)$, $\tilde{X}^{(2)}_i = H(X^{(2)}_i)$,
where $H$ denotes the quantile function of a Pareto (type I) distribution with shape parameter $1/2$ and location parameter 1, i.e., $H(x) = (1-x)^{-2}$, 
$x \in [0,1)$. This strictly increasing transformation leaves the $P$-NED coefficients as well as Kendall's tau unchanged. 
The margins $\tilde{X}^{(1)}_i$, $\tilde{X}^{(2)}_i$ are Pareto distributed and have finite moments only up to order less than 1/2. The stationary process 
$( (\tilde{X}^{(1)}_i, \tilde{X}^{(2)}_i))_{i\in\Z}$ is neither mixing nor $L_p$-NED for any $p \ge 1$, but fulfills the assumptions of Corollary \ref{th:change+variance-asy}.
The next lemma connects $P$-NED and $L_p$-NED.
\begin{lemma} \label{lem:l_0}
Let $((\bX_n,\bZ_n))_{n \in \Z}$ be as in Definition \ref{def:dep}.
\begin{enumerate}[(i)]
\item \label{lem:l_0(i)}
If $(\bX_n)_{n\in\Z}$ is $P$-NED on $(\bZ_n)_{n \in \Z}$ with functions $\Phi$ and $f_k$, $k \in \N$, and approximating constants $(a_{k})_{k \in \N}$, and $g: \R^r \to \R^d$ is a Lipschitz continuous function with Lipschitz constant $L$, then the process $(g(\bX_n))_{n \in \Z}$ is $P$-NED on $(\bZ_n)_{n \in \Z}$ with functions $\tilde{\Phi}(\varepsilon) = \Phi(\varepsilon/L)$ and $g \circ f_k$, $k \in \N$, and the same approximating constants $(a_{k})_{k \in \N}$.
\item \label{lem:l_0(ii)}
Let $(\bX_n)_{n\in\Z}$ be bounded and $P$-NED on $(\bZ_n)_{n \in \Z}$ with functions $\Phi$ and $f_k$, $k \in \N$, and approximating constants $(a_{k})_{k \in \N}$. Then $(\bX_n)_{n \in \Z}$ is $L_p$-NED on $(\bZ_n)_{n \in \Z}$ for any $p \ge 1$. If there is further a sequence $(s_k)_{k \in \N}$ of non-negative numbers such that
\be \label{eq:ned.con}
	a_{k} \Phi(s_k) = O(s_k) \qquad \quad  (k \to \infty),
\ee 
then the $L_p$-NED approximating constants $(a_{p,k})_{k \in \N}$ of $(\bX_n)_{n \in \Z}$ satisfy $a_{p,k}^p = O(s_k)$ for $k \to \infty$.
\item \label{lem:l_0(iii)}
Let $(\bX_n)_{n \in \Z}$ be $L_p$-NED, $p \ge 1$, on $(\bZ_n)_{n \in \Z}$ with approximating constants $(a_{p,k})_{k \in \N}$. Then $(\bX_n)_{n \in \Z}$ is $P$-NED on $(\bZ_n)_{n \in \Z}$. If there is further a non-increasing function $\Phi: (0,\infty) \to (0,\infty)$ and a sequence $(s_{k})_{k \in \N}$ of non-negative numbers converging to zero that satisfy
\[
	\Phi(\varepsilon) s_{k} \ge q^{p-1} \, a_{p,k}^p \,\varepsilon^{-p},
\]
then $(\bX_n)_{n \in \Z}$ is $P$-NED on $(\bZ_n)_{n \in \Z}$ with approximation constants $(s_{k})_{k \in \N}$ and function $\Phi$. The functions $f_k$ can be chosen as $f_k(\bZ_{-k},\ldots,\bZ_k) = E(\bX_0|\F_{-k}^k)$, $k \in \N$.
\end{enumerate}
\end{lemma}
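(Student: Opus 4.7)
My plan is to prove the three parts separately. Parts (i) and (iii) are direct consequences of Lipschitz and Markov inequalities, respectively, while part (ii) requires a truncation argument.

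For (i), the Lipschitz inequality $|g(\bX_0) - (g\circ f_k)(\bZ_{-k},\dots,\bZ_k)|_1 \le L\,|\bX_0 - f_k(\bZ_{-k},\dots,\bZ_k)|_1$ gives the inclusion of events
\[
\{|g(\bX_0) - (g\circ f_k)(\bZ_{-k},\dots,\bZ_k)|_1 > \varepsilon\} \ \subseteq \ \{|\bX_0 - f_k(\bZ_{-k},\dots,\bZ_k)|_1 > \varepsilon/L\},
\]
and applying the $P$-NED bound to the latter produces $a_k\Phi(\varepsilon/L) = a_k\tilde\Phi(\varepsilon)$, as claimed.

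For (ii), I would first truncate. Fix $M$ with $P(|\bX_0|_1 \le M) = 1$, let $\phi_M:\R^q \to\R^q$ denote componentwise truncation at level $M$, and put $\tilde f_k = \phi_M\circ f_k$. Since $\phi_M$ is $1$-Lipschitz in $|\cdot|_1$ and $\phi_M(\bX_0) = \bX_0$, one obtains $|\bX_0-\tilde f_k|_1 \le |\bX_0-f_k|_1$ together with the deterministic bound $|\bX_0-\tilde f_k|_1 \le (q+1)M$. The tail integration
\[
	E|\bX_0-\tilde f_k|_1^p \ = \ \int_0^{(q+1)M} p\varepsilon^{p-1}\, P(|\bX_0-\tilde f_k|_1 > \varepsilon)\, d\varepsilon \ \le \ s_k^p + a_k \Phi(s_k)\,((q+1)M)^p,
\]
where I split at $s_k$ and use that $\Phi$ is non-increasing, combined with the conditional-Jensen bound
\[
|\bX_0 - E(\bX_0|\F_{-k}^k)|_1 \ \le \ |\bX_0-\tilde f_k|_1 + E(|\bX_0-\tilde f_k|_1 \,|\, \F_{-k}^k),
\]
gives $a_{p,k}^p \le E|\bX_0 - E(\bX_0|\F_{-k}^k)|_1^p \le 2^p E|\bX_0-\tilde f_k|_1^p$ (after using $|x|_p\le|x|_1$). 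Under $a_k\Phi(s_k)=O(s_k)$ and $s_k\to 0$, this is of order $O(s_k)$ because $s_k^p\le s_k$ eventually. The unconditional $L^p$-NED statement follows either by choosing $s_k\to 0$ with $a_k\Phi(s_k)\to 0$ (possible because $a_k\to 0$), or more directly from $|\bX_0-\tilde f_k|_1\to 0$ in probability and dominated convergence.

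For (iii), put $f_k(\bZ_{-k},\dots,\bZ_k) := E(\bX_0|\F_{-k}^k)$, which is a measurable function of its arguments by Doob--Dynkin. Combining Markov's inequality with the norm equivalence $|x|_1 \le q^{1-1/p}|x|_p$ on $\R^q$ yields
\[
	P(|\bX_0-f_k|_1 > \varepsilon) \ \le \ P(|\bX_0-f_k|_p > q^{1/p-1}\varepsilon) \ \le \ q^{p-1}\varepsilon^{-p}a_{p,k}^p,
\]
which establishes $P$-NED, since the right-hand side tends to $0$ for every fixed $\varepsilon$; the claimed rate under the hypothesized inequality $\Phi(\varepsilon)s_k\ge q^{p-1}a_{p,k}^p\varepsilon^{-p}$ is then immediate. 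The main obstacle is the bookkeeping in (ii): designing a bounded, $\F_{-k}^k$-measurable approximant that retains the $P$-NED tail inequality and then splitting the layer-cake integral carefully enough to convert the probabilistic bound into the desired $L^p$-rate.
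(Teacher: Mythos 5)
Your proof is correct and follows essentially the same route as the paper's: part (ii) rests on boundedness, conditional Jensen and a split of the expectation at the threshold $s_k$, and part (iii) on Markov's inequality plus the $|\cdot|_1$--$|\cdot|_p$ comparison with $f_k(\bZ_{-k},\ldots,\bZ_k)=E(\bX_0|\F_{-k}^k)$, exactly as in the paper. Your explicit truncation of $f_k$ only makes rigorous the boundedness of $|\bX_0-f_k(\bZ_{-k},\ldots,\bZ_k)|_1$ that the paper's constant $C_3$ tacitly assumes, and your layer-cake computation of the $p$-th moment replaces the paper's reduction to the first moment, yielding the same $O(s_k)$ conclusion.
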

Since $\Phi$ is non-increasing, condition (\ref{eq:ned.con}) puts an upper bound on the speed of decay of  $(s_k)_{k \in \N}$ in the sense that, if (\ref{eq:ned.con}) is fulfilled by some sequence $(s_k)_{k \in \N}$, then it is also fulfilled by any sequence $(\tilde{s}_k)_{k \in \N}$ for which $\tilde{s}_k \le s_k$ for all $k$ larger than some $n \in \N$.

The next lemma shows that the near epoch dependence is preserved under transformations that satisfy the variation condition.
\begin{lemma}\label{lem:neu1} Let $(\bX_n)_{n\in\Z}$ be $P$-NED on $(\bZ_n)_{n\in\Z}$ with $a_k\Phi(s_k)=O(s_k^{(2+\delta)/\delta})$. Furthermore, let $g$ be a $U$-statistic kernel with uniform $(2+\delta)$ moments for some $\delta>0$, i.e., $E|g(\bX_0,\bX_n)|^{2+\delta} < M$ for all $n \in\N$, and let $g$ satisfy the variation condition with respect to the distribution $F$ of $\bX_0$ (Assumption \ref{ass1}). Then the sequence  $(g_1(\bX_n))_{n\in\Z}$ is $L_2$-NED on $(\bZ_n)_{n\in\N}$ with approximation constants $a_{k,2}=O(s_k^{\delta/(2+2\delta)})$.
\end{lemma}


\section{$U$-statistics process under local alternatives}
\label{sec:app:local}

In this section, we provide the basis for the proof of Theorem \ref{th:localpower} in Section \ref{sec:app:proofs}.
We analyze the $U$-statistic process $\sum_{1\leq i<j\leq [ns]} g(X_i,X_j)$, $0\leq s\leq 1$, in the case of a change point at time $[n\lambda^\ast ]$, and calculate the asymptotic distribution under local alternatives. We consider the following model
\[
 X_i=X_i^{(n)}=\left\{ 
 \begin{array}{ll}
 \xi_i &\; 1\leq i \leq [n\lambda^\ast]\\
 \xi_i^{(n)} &\;  [n\lambda^\ast]<i\leq n,
 \end{array}
 \right.
\]
where $(\xi_i)_{i\geq 1}$ and $(\xi_i^{(n)})_{i\geq 1}$, $n\geq 1$, are stationary processes
such that  for any $n\geq 1$  the bivariate processes $(\xi_{i},\xi_{i}^{(n)})_{i\geq 1}$ are  P-NED on an absolutely regular process, with mixing coefficients and NED-coefficients independent of $n$.

For the corresponding $U$-statistic process $ \sum_{1\leq i<j\leq [ns]} g(X_i^{(n)},X_j^{(n)})$, we have to distinguish the cases $s\leq \lambda^\ast$ and $s\geq\lambda^\ast$. When 
$s\leq \lambda^\ast$, we have
\[
 \sum_{1\leq i<j\leq [ns]} g(X_i^{(n)},X_j^{(n)}) = \sum_{1\leq i<j\leq [ns]} g(\xi_i,\xi_j),
\]
while for $s\geq \lambda^\ast$ we obtain
\begin{eqnarray}
 && \sum_{1\leq i<j\leq [ns]} g(X_i^{(n)},X_j^{(n)}) \nonumber \\
 && \quad = \sum_{1\leq i<j\leq [n\lambda^\ast ]} g(\xi_i,\xi_j)
 + \sum_{j=[n\lambda^\ast]+1}^{[ns]} \sum_{i=1}^{[n\lambda^\ast]} g(\xi_i,\xi_j^{(n)})+\sum_{j=[n\lambda^\ast]+1}^{[ns]}
  \sum_{i=[n\lambda^\ast]+1}^{j-1} g(\xi_i^{(n)},\xi_j^{(n)}).
  \label{eq:basic-1}
\end{eqnarray}
Note that of the three terms on the r.h.s., the first and the last term are one-sample U-statistics, while the second term is a two-sample $U$-statistic.  We now define the constant terms and the first order terms of the Hoeffding decompositions,
\begin{align*}
\theta   				= &\ E(g(\xi,\eta)),  					& &   \\
\theta_1^{(n)}	= &\ E(g(\xi^{(n)},\eta)),  	& \theta_2^{(n)}& = E(g(\xi^{(n)},\eta^{(n)})), \\ 
g_1(x)					= &\ E(g(x,\xi_j)), 					& g_1^{(n)}(x)	& = E(g(x,\xi_j^{(n)})). 
\end{align*}
Here, $\xi$, $\eta$, $\xi^{(n)}$ and $\eta^{(n)}$ 
are independent random variables such that $\xi$ and $\eta$ have the same distribution as 
$\xi_1$, while $\xi^{(n)}$ and $\eta^{(n)}$ have the same distribution as $\xi_1^{(n)}$.  Note that the functions $g_1$ and $g_1^{(n)}$ are not centered.  In fact,  they require different centerings for each of the three terms on the r.h.s.\ of \eqref{eq:basic-1}.

In this way, we obtain the following Hoeffding decompositions of the summands in \eqref{eq:basic-1}:
\begin{eqnarray*}
g(\xi_i,\xi_j)&=&\theta+(g_1(\xi_i)-\theta)+(g_1(\xi_j)-\theta) +h(\xi_i,\xi_j)\\
g(\xi_i,\xi_j^{(n)})&=&\theta_1^{(n)}+(g_1^{(n)}(\xi_i)-\theta_1^{(n)})+(g_1(\xi_j^{(n)})-\theta_1^{(n)})+h_1^{(n)}(\xi_i,\xi_j^{(n)})\\
g(\xi_i^{(n)},\xi_j^{(n)})&=&\theta_2^{(n)}+(g_1^{(n)}(\xi_i^{(n)})-\theta_2^{(n)} )+
(g_1^{(n)}(\xi_j^{(n)}) -\theta_2^{(n)})+h_2^{(n)}(\xi_i^{(n)},\xi_j^{(n)}).
\end{eqnarray*}
The functions $h$, $h_1^{(n)}$, and $h_2^{(n)}$ are defined in the obvious way, and a straightforward calculation shows that they are degenerate, i.e. that the integral with respect to one of the arguments vanishes.

Plugging this into \eqref{eq:basic-1}, we obtain the Hoeffding decomposition of the $U$-statistic 
\begin{eqnarray*}
&&\hspace{-15mm} \sum_{1\leq i<j\leq [ns]} g(X_i^{(n)},X_j^{(n)}) \ = \ \binom{[n\lambda^\ast]}{2} \theta +[n\lambda^\ast]([ns]-[n\lambda^\ast]) \theta_1^{(n)}
+\binom{[ns]-[n\lambda^\ast]}{2} \theta_2^{(n)} \\
&&\; + ([n\lambda^\ast]-1)\sum_{i=1}^{[n\lambda^\ast]} (g_1(\xi_i)-\theta)
+ ([ns]-[n\lambda^\ast])\sum_{i=1}^{[n\lambda^\ast]} (g_1^{(n)}(\xi_i) -\theta_1^{(n)})  \\
&&\; +[n\lambda^\ast] \sum_{i=[n\lambda^\ast]+1}^{[ns]} (g_1(\xi_i^{(n)}) -\theta_1^{(n)}) 
 +([ns]-[n\lambda^\ast]-1) \sum_{i=[n\lambda^\ast]+1}^{[ns]}(g_1^{(n)}(\xi_i^{(n)})-\theta_2^{(n)}) \\
 &&\; + \sum_{1\leq i<j\leq [n\lambda^\ast]} h(\xi_i,\xi_j)
 + \sum_{j=[n\lambda^\ast]+1}^{[ns]} \sum_{i=1}^{[n\lambda^\ast]} h_1^{(n)} (\xi_i,\xi_j^{(n)})+\sum_{j=[n\lambda^\ast]+1}^{[ns]}
  \sum_{i=[n\lambda^\ast]+1}^{j-1} h_2^{(n)}(\xi_i^{(n)},\xi_j^{(n)}).
\end{eqnarray*}

\vspace{3mm}
We make a number of assumptions regarding the asymptotic behaviour of the terms in the Hoeffding expansion that have to be checked in particular examples. 
\begin{assumption} \label{ass:app1}
There are real numbers $c_1, c_2$ such that $\sqrt{n}(\theta_1^{(n)} -\theta) \rightarrow c_1$ and $\sqrt{n}(\theta_2^{(n)} -\theta)
\rightarrow c_2$.
\end{assumption}

\begin{assumption}\label{ass:app2}
\[
\max_{1\leq k\leq n} \left|\frac{1}{\sqrt{n}}\sum_{i=1}^k\left((g_1(\xi_i)-\theta)-(g_1^{(n)}(\xi_i)-\theta_1^{(n)})\right) \right| \rightarrow 0,
\]\[
\max_{1\leq k\leq n} \left|\frac{1}{\sqrt{n}}\sum_{i=1}^k\left((g_1(\xi_i)-\theta)-(g_1(\xi_i^{(n)})-\theta_1^{(n)})\right) \right|  \rightarrow 0,\]\[
\max_{1\leq k\leq n} \left|\frac{1}{\sqrt{n}}\sum_{i=1}^k\left((g_1(\xi_i)-\theta)-(g_1^{(n)}(\xi_i^{(n)})-\theta_2^{(n)})\right) \right|  \rightarrow 0.
\]
\end{assumption}
\begin{theorem} \label{th:app1}
Under Assumptions \ref{ass:app1} and \ref{ass:app2}, 
\[
  \frac{2}{\sqrt{n}([nt]-1)} \sum_{1\leq i<j\leq [ns]} (g(X_i^{(n)},X_j^{(n)}) -\theta)\; 
   \claw\; 2\sigma W + \phi_{\lambda^\ast}(s),
\] 
where $\sigma$ and $W$ are defined as in Theorem~2.5, and where the function $\phi_\lambda^\ast$ is defined as follows:
\[
  \phi_{\lambda^\ast}(s)=\left\{ 
  \begin{array}{ll}
   \frac{2\lambda^\ast (s-\lambda^\ast)}{s} c_1+\frac{(s-\lambda^\ast)^2}{s} c_2 & s\geq \lambda^\ast, \\
   0 & s\leq \lambda^\ast
  \end{array}
  \right.
\]
with $c_1$ and $c_2$ being defined in Assumption \ref{ass:app1}.
\end{theorem}
\begin{proof}[Proof of Theorem \ref{th:app1}]
We first consider the Hoeffding decomposition of the $U$-statistic under the null hypothesis, i.e.
\begin{eqnarray*}
\sum_{1\leq i<j \leq [ns]} \left(g(\xi_i,\xi_j)-\theta \right) =([ns]-1)\sum_{i=1}^{[ns]} (g_1(\xi_i)-\theta)
 +\sum_{1\leq i<j\leq [ns]} h(\xi_i,\xi_j).
\end{eqnarray*}
Comparing this with the Hoeffding decomposition under the local alternative, we obtain
for $s\geq \lambda^\ast$,
\[
 \sum_{1\leq i<j\leq [ns]} \big( g(X_i^{(n)},X_j^{(n)})-\theta  \big)
- \sum_{1\leq i<j \leq [ns]} \big(g(\xi_i,\xi_j)-\theta \big)
\] \[
\quad = [n\lambda^\ast]([ns]-[n\lambda^\ast]) (\theta_1^{(n)}-\theta)
+\frac{1}{2} ([ns]-[n\lambda^\ast])([ns]-[n\lambda^\ast]-1) (\theta_2^{(n)}-\theta) 
\]
\[ 
\qquad + ([ns]-[n\lambda^\ast]) \sum_{i=1}^{[n\lambda^\ast]} \big((g_1^{(n)}(\xi_i)-\theta_1^{(n)}) 
-(g_1(\xi_i)-\theta)  \big)\] \[
 \qquad +([ns]-[n\lambda^\ast]-1) \sum_{i=[n\lambda^\ast]+1}^{[ns]} 
 \big( (g_1(\xi_i^{(n)}-\theta_1^{(n)})-(g_1(\xi_i)-\theta)  \big)
\] \[
 \qquad + ([ns]-[n\lambda^\ast]-1)\sum_{i=[n\lambda^\ast]+1}^{[ns]} 
 \big( (g_1^{(n)}(\xi_i^{(n)})-\theta_2^{(n)})-(g(\xi_i)-\theta)  \big) 
\] \[
 \qquad  + \sum_{j=[n\lambda^\ast]+1}^{[ns]} \sum_{i=1}^{[n\lambda^\ast]} h_1^{(n)} (\xi_i,\xi_j^{(n)})+\sum_{j=[n\lambda^\ast]+1}^{[ns]}
  \sum_{i=[n\lambda^\ast]+1}^{[ns]} h_2^{(n)}(\xi_i^{(n)},\xi_j^{(n)})
\] \[
   \qquad - \sum_{j=[n\lambda^\ast]+1}^{[ns]} \sum_{i=1}^{[n\lambda^\ast]} h  (\xi_i,\xi_j )-\sum_{j=[n\lambda^\ast]+1}^{[ns]}
  \sum_{i=[n\lambda^\ast]+1}^{[ns]} h (\xi_i ,\xi_j),
\]
while for $s\leq \lambda^\ast$, the term on the l.h.s.\ equals zero. Thus we obtain, again for $s\geq \lambda^\ast$,
\[
 \hspace{-5mm}\frac{2}{\sqrt{n}([ns]-1)}\bigg(  \sum_{1\leq i<j\leq [ns]} \big( g(X_i^{(n)},X_j^{(n)})-\theta  \big)
- \sum_{1\leq i<j \leq [ns]} \big(g(\xi_i,\xi_j)-\theta \big)\bigg) -\phi_\lambda^\ast(s)
\] \[
= \frac{2 [n\lambda^\ast] ([ns]-[n\lambda^\ast])}{n([ns]-1)} \sqrt{n}(\theta_1^{(n)}-\theta)
 +  \frac{([ns]-[n\lambda^\ast]) ([ns]-[n\lambda^\ast]-1)}{n([ns]-1)}\sqrt{n}(\theta_2^{(n)}-\theta)- \phi_\lambda^\ast(s) 
\] \[
  \quad + \frac{2([ns]-[n\lambda^\ast])}{n}\frac{1}{\sqrt{n}} \sum_{i=1}^{[n\lambda^\ast]} \big((g_1^{(n)}(\xi_i)-\theta_1^{(n)}) 
-(g_1(\xi_i)-\theta)  \big)
\] \[
 \quad +\frac{2([ns]-[n\lambda^\ast]-1)}{[ns]-1}\frac{1}{\sqrt{n}} \sum_{i=[n\lambda^\ast]+1}^{[ns]} 
 \big( (g_1(\xi_i^{(n)}-\theta_1^{(n)})-(g_1(\xi_i)-\theta)  \big)
\] \[
 \quad + \frac{2([ns]-[n\lambda^\ast]-1)}{[ns]-1}\frac{1}{\sqrt{n}}\sum_{i=[n\lambda^\ast]+1}^{[ns]} 
 \big( (g_1^{(n)}(\xi_i^{(n)})-\theta_2^{(n)})-(g_1(\xi_i)-\theta)  \big) 
\] \[
 \quad  +\frac{2}{\sqrt{n}([ns]-1)} \sum_{j=[n\lambda^\ast]+1}^{[ns]} \sum_{i=1}^{[n\lambda^\ast]} h_1^{(n)} (\xi_i,\xi_j^{(n)})+     \frac{2}{\sqrt{n}([ns]-1)}        \sum_{j=[n\lambda^\ast]+1}^{[ns]}
  \sum_{i=[n\lambda^\ast]+1}^{[ns]} h_2^{(n)}(\xi_i^{(n)},\xi_j^{(n)})
\] \[
   \quad -  \frac{2}{\sqrt{n}([nt]-1)}     \sum_{j=[n\lambda^\ast]+1}^{[ns]} \sum_{i=1}^{[n\lambda^\ast]} h  (\xi_i,\xi_j )
  -  \frac{2}{\sqrt{n}([ns]-1)}   \sum_{j=[n\lambda^\ast]+1}^{[ns]}
  \sum_{i=[n\lambda^\ast]+1}^{[ns]} h (\xi_i ,\xi_j),
\]
The right hand side converges to zero uniformly in $s\geq \lambda^\ast$. For the first four lines on the r.h.s., this follows from our assumptions. Uniform convergence of the  terms involving the degenerate kernels $h$, $h_1^{(n)}$, and $h_2^{(n)}$ follows with arguments similar to those employed in the proof of Theorem 2.5.
\end{proof}
Returning to the original notation of our paper, we define the $U$-statistic
\[
 U_k^{(n)} =\frac{1}{\binom{k}{2}} \sum_{1\leq i<j\leq k} g(X_i^{(n)},X_j^{(n)}), \; 0\leq k\leq n.
\]
Using the continuous mapping theorem, we derive the following corollary to the above theorem
\begin{corollary}
\[
\frac{1}{2\hat{\sigma}_n} \max_{1\leq k\leq n-1} \frac{k}{\sqrt{n}} |U_k^{(n)}-U_n^{(n)}|\;
\claw \; \sup_{0\leq \lambda \leq 1}
| B(\lambda) + \frac{1}{2\sigma} \left( \phi_{\lambda^\ast}(\lambda)-\lambda \phi_{\lambda^\ast}(1)\right)|.
\]
\end{corollary}


\section{Proofs of Sections \ref{sec:inf}, \ref{sec:main}, and \ref{sec:identification}}
\label{sec:app:proofs}

In this section we prove the theorems of the main text. These are Theorems \ref{theo1} (invariance principle for the sequential $U$-process), 
\ref{theo3} (consistency of the long run variance estimator), and Corollary
\ref{th:change+variance-asy} (asymptotics of the test statistic under the null), Theorem \ref{th:identification} (estimation of the change-point), and Theorem  \ref{th:localpower} (local power analysis).

\begin{proof}[Proof of Theorem \ref{theo1} (Invariance principle for the sequential $U$-process)] Using the Hoeffding decomposition, we can write the sequential $U$-process as
\begin{equation*}
\frac{[ns]}{\sqrt{n}}\left(U_{[ns]}-U\right)=\frac{2}{\sqrt{n}}\sum_{i=1}^{[ns]}g_1(\bX_i)+\frac{2}{\sqrt{n}}\frac{1}{[ns]-1}\sum_{1\leq i<j\leq [ns]}g_2(\bX_i,\bX_j).
\end{equation*}
For the first summand we find by Assumption \ref{ass3} and Lemma \ref{lem:neu1} that the sequence $(g_1(\bX_n))_{n\in\N}$ is $L_2$-NED with approximation constants $a_{k,2}=O(k^{-3})$. So we can apply Corollary 3.2 of  \citet{wooldridge:white:1988}, stating that the partial sum process $(\frac{2}{\sqrt{n}}\sum_{i=1}^{[ns]}g_1(\bX_i))_{t\in[0,1]}$ converges weakly to a Brownian motion with variance $4\sigma^2$. For the second summand, we use Lemma \ref{lem6}, which implies that $|\frac{1}{n}\sum_{1\leq i<j\leq n}g_2(\bX_i,\bX_j)|\leq Cn^{\frac{1}{4}}\log^2 (n)$ almost surely and consequently
\begin{equation*}
\sup_{s\in[0,1]}\frac{1}{[ns]\sqrt{n}}\sum_{1\leq i<j\leq [ns]}g_2(\bX_i,\bX_j)\leq Cn^{-\frac{1}{4}}\log^2 (n)\rightarrow0
\end{equation*}
almost surely as $n\rightarrow \infty$. Slutsky's theorem completes the proof.
\end{proof}

\begin{proof}[Proof of Theorem \ref{theo3} (consistency of the long run variance estimator)] We can write the variance estimator $\hat{\sigma}^2_n$ as
\[
	\hat{\sigma}_n^2 =\sum_{r=-(n-1)}^{n-1}\kappa\left(\frac{|r|}{b_n}\right)\frac{1}{n}\sum_{i=1}^{n-|r|}\hat{g}_1(\bX_i)\hat{g}_1(\bX_{i+|r|})
\ =\ \sum_{r=-(n-1)}^{n-1}\kappa\left(\frac{|r|}{b_n}\right)\frac{1}{n}\sum_{i=1}^{n-|r|}g_1(\bX_i)g_1(\bX_{i+|r|})
\]\[
\qquad \qquad +\sum_{r=-(n-1)}^{n-1}\frac{1}{n}\sum_{i=1}^{n-|r|}\left(g_1(\bX_i)g_1(\bX_{i+|r|})-\hat{g}_1(\bX_i)\hat{g}_1(\bX_{i+|r|})\right)\kappa(|k|/b_n).
\]
By Theorem 2.1 of \citet{dejong:2000}, we know that the first summand converges to $\sigma^2=\sum_{k=-\infty}^\infty \cov\left(g_1(\bX_0),g_1(\bX_k)\right)$. The second summand converges to 0 by Lemma \ref{lem11}, and the proof is complete.
\end{proof}

%
%
%
%
%
%
%
%
%
%
%
%
%
%
%
%
%

\begin{proof}[Proof of Corollary \ref{th:change+variance-asy} (asymptotic null distribution of change-point test statistic)]
Corollary \ref{th:change+variance-asy} is a special case of Corollary \ref{cor:1} for the specific kernel $g$ given by (\ref{eq:kernel}). It remains to show that, under the conditions of Corollary \ref{th:change+variance-asy}, the assumptions of Corollary \ref{cor:1} are met. Since $g$ is bounded, Assumption \ref{ass2} is satisfied for any $\delta > 0$, and it suffices that Assumption \ref{ass3} is fulfilled for some $\delta >0$. 
\end{proof}

%
%
%
%
%
%
%
%

\begin{proof}[Proof of Theorem~\ref{th:identification} (Change-point estimation)]

Under the assumptions in Theorem~\ref{th:identification} concerning the value of $\tau_{FG}$,
the function $|c(\lambda)|$, $\lambda \in [0,1]$, has a unique maximum at $\lambda = \lambda^*$, where the function $c:[0,1] \to \R$ is given by
\be \label{eq:c}
  c(\lambda) = 
  \begin{cases} 
   	\left[ 
   		(1-\lambda^{*2})\tau_F\  - (1-\lambda^*)^2 \tau_G\  - 2 \lambda^* (1 - \lambda^*) \tau_{F,G}	
   	\right]\lambda & \mbox{ for }\  0 \le \lambda < \lambda^*, 
   			\\
   	2 \lambda^* (\tau_{F,G} - \tau_G) (1-\lambda)\  +\  \lambda^{*2} (\tau_F+\tau_G- 2\tau_{F,G}) \left(\frac{1}{\lambda}-\lambda \right)& \mbox{ for  }\  \lambda^* \le \lambda \le 1.
  \end{cases}
\ee 
The proof relies on the fact that 
\be \label{eq:cp}
	\left(C_n(\lambda)\right)_{0\le\lambda\le1} = 
	\left( \frac{[\lambda n]}{n} \left( \hat{\tau}_{[\lambda n]} - \hat{\tau}_n \right) \right)_{0\le\lambda\le1}
	\ \cid \  \ 
	(c(\lambda))_{0\le\lambda\le1}
\ee
in $D[0,1]$. With the argmax theorem \citep[][Corollary 3.2.3]{vandervaart:wellner:1996} we have that 
\[
	\hat\lambda_n \ =\ 
	  \argmax_{0\le\lambda\le1} | C_n(\lambda) | \ \cip \ 
	  \argmax_{0\le\lambda\le1} | c(\lambda) | \ =\ \lambda^*.
\]
It remains to prove (\ref{eq:cp}). To simplify notation, we let $m = [\lambda^* n]$, write $\bZ_{i}^{(n)}$ short for $(X_{i}^{(n)},Y_{i}^{(n)})$ and further suppress the subscript $n$. Assume for an instant that the $\bZ_i$, $i=1,\ldots,n$, are independent. Then we have
\[
 t_n(k) = E(\hat{\tau}_k) =
 \begin{cases}
 		\ \ \tau_F & \ \mbox{ for } k \le m, \\
  	\frac{m(m-1)}{k(k-1)}\tau_F \ + \ \frac{(k-m)(k-m-1)}{k(k-1)}\tau_G \ + \ \frac{2 m (k-m)}{k(k-1)} \tau_{F,G}
  	& \ \mbox{ for } k \ge m + 1,
 \end{cases}
\]
from where we derive the mean function $c_n(\lambda) = E[C_n(\lambda)] = [\lambda n] n^{-1} (t_n([\lambda n]) - t_n(n))$, $\lambda\in [0,1]$, 
of the process of $(C_n(\lambda))_{0\le\lambda\le1}$ and observe that it converges to the function $c$.
Thus it remains to show that $\max_{0\le\lambda\le1} | C_n(\lambda)-c_n(\lambda)|$ converges to zero in probability
also under the short-range dependence assumption of Model~\ref{mod:cp}. In the following, let the $\bZ_i$, $i =1,\ldots,n$ be weakly dependent as specified by Model~\ref{mod:cp}.   
We will prove that 
\be \label{eq:wendler:cp}
	\max_{m <k\leq n}\frac{k}{n}\left|\hat{\tau}_{k}-t_n(k)\right| \cip 0.
\ee  
The convergence of 
$\max_{1<k\leq m}\frac{k}{n}\left|\hat{\tau}_{k}-t_n(k)\right|$ follows along the same lines. Hence the maximum in (\ref{eq:wendler:cp}) can be extended to the range $k = 1, \ldots, n$.
%
We split the difference $\frac{k}{n}\left|\hat{\tau}_{k}-t_n(k)\right|$ into three parts: two one-sample $U$-statistics and one two-sample $U$-statistic with the kernel
\begin{equation*}
 g\left((x_1,y_1),(x_2,y_2)\right)=\bEins_{(0,\infty)}((x_2-x_1)(y_2-y_1))-\bEins_{(-\infty,0)}((x_2-x_1)(y_2-y_1))
\end{equation*}
as in Section \ref{sec:main}. By the triangle inequality we get:
\[
	\frac{k}{n}\left|\hat{\tau}_{k}-t_n(k)\right| 
	\ \leq\  
	\left|\frac{2}{n(k-1)}\sum_{1\leq i<j \leq m}\left(g(\bZ_i,\bZ_j)-\tau_F\right)\right|
\]
\be \label{eq:triangle}	
\qquad + \ \left|\frac{2}{n(k-1)}\sum_{m+1\leq i<j \leq k}\left(g(\bZ_i,\bZ_j)-\tau_G\right)\right|
 \  + \ \left|\frac{2}{n(k-1)}\sum_{1\leq i\leq m<j \leq k}\left(g(\bZ_i,\bZ_j)-\tau_{F,G}\right)\right|.
\ee

\smallskip
\noindent
For the first summand on the right-hand side, we have by Theorem \ref{theo1}:
\[ 
\textstyle
\max\limits_{m<k\leq n}\left|\frac{2}{n(k-1)}\sum\limits_{1\leq i<j \leq m}\left(g(\bZ_i,\bZ_j)-\tau_F\right)\right|
\leq \left|\frac{2}{n(m-1)}\sum\limits_{1\leq i<j \leq m}\left(g(\bZ_i,\bZ_j)-\tau_F\right)\right| \cip 0.
\] 
Due to our assumptions, $(\bZ_{i}^{(n)})_{m+1\leq i \leq n}$ is a stationary process which satisfies Condition \ref{ass3}, so we also treat the second summand by Theorem \ref{theo1}.
For the third summand, we apply a two-sample Hoeffding decomposition 
 $ g(\bz_1,\bz_2)=\tau_{F,G}+\tilde{g}_1(\bz_1)+\tilde{g}_2(\bz_2)+\tilde{g}_3(\bz_1,\bz_2)$
with
\[ 
	\tilde{g}_1(\bz_1) =E\left(g(\bz_1,\bZ_{m+1})\right)-\tau_{F,G}, \qquad
	\tilde{g}_2(\bz_1) =E\left(g(\bZ_1,\bz_2)\right)-\tau_{F,G},
\]\[
	\tilde{g}_3(\bz_1,\bz_2) =g(\bz_1,\bz_2)-\tilde{g}_1(\bz_1)-\tilde{g}_2(\bz_2)-\tau_{F,G},
\]
where $\bz_1, \bz_2 \in \R^2$. We get
\[ 
 \frac{2}{n(k-1)}\sum\limits_{1\leq i\leq m<j \leq k} \! \! \left(g(\bZ_i,\bZ_j)-\tau_{F,G}\right) 
\]\be \label{eq:3summands} 
 = \, \frac{2(k-m)}{n(k-1)}\!\sum\limits_{i=1}^{m} \! \tilde{g}_1(\bZ_i) 
 +   \frac{2 m}{n(k-1)} \! \sum\limits_{j=m+1}^{k} \!\tilde{g}_2(\bZ_j) 
 +  \frac{2}{n(k-1)} \sum\limits_{1\leq i\leq m<j \leq k}\tilde{g}_3(\bZ_i,\bZ_j). 
\ee
To obtain a maximal inequality, we use Theorem 2.4.1 of \citet{stout:1974}: For random variables $R_1,\ldots,R_n$ with $E(\sum_{j=k}^{k+l-1}R_j)^2\leq C_1l$ for a constant $C_1$, we have that
\be \label{eq:stout} 
	E\left[ \max_{1\leq l\leq n}\Big( \sum\nolimits_{j=1}^lR_j\Big)^2 \right] 
	\leq C_1n\left(\log(2n)/\log2 \right)^2.
\ee 
We define the random variables $R_j=\sum_{i=1}^{m}\tilde{g}_3(\bZ_i,\bZ_{j+m})$. Without loss of generality, we can assume that the random variables $\bZ_i$ are bounded and thus the process is $L_1$-NED by Lemma \ref{lem:l_0}(\ref{lem:l_0(ii)}). Furthermore, the kernel $\tilde{g}_3$ is degenerate, so we can apply Proposition A.2 of \citet{dehling:fried:2012} to obtain the moment bound
$	E\big(\sum_{j=1}^{k+l-1}R_j\big)^2
	= 
	E\big[
		\sum_{j=1}^{k+l-1} \sum_{i=1}^{m}\tilde{g}_3(\bZ_i,\bZ_{j+m})
	\big]^2\leq C m l$.
Applying (\ref{eq:stout}), we find
\[
	E\left(\max_{m<k\leq n}
	    \left|\frac{2}{n(k-1)}\sum\limits_{1\leq i\leq m<j  \leq k }\tilde{g}_3(\bZ_i,\bZ_j)\right|
	   \right)^2 \ \leq  
		m^{-2} E\left(\max\limits_{1<k\leq n-m}\left|\frac{2}{n}\sum_{j=1}^{k}R_j\right|\right)^2  \leq 
		\frac{C \log^2(2n)}{m n\log^2 2}
\]
converges to zero as $n\rightarrow0$. Thus the third summand in (\ref{eq:3summands}) converges to zero in probability. 
As for the first two summands, we have that $E\big[\sum_{j=k}^{k+l-1}\tilde{g}_2(\bZ_j)\big]^2 \leq Cl$, since
	$l^{-1} \Var\left[ \sum_{j=k}^{k+l-1}\tilde{g}_2(\bZ_j) \right]$
converges to a finite limit as $l \to \infty$. Hence, (\ref{eq:stout}) applied to $R_j=\tilde{g}_2(\bZ_{j+m})$ leads to
\[
	\max_{m<k\leq n}\left|\frac{2}{n}\sum\nolimits_{j=m+1}^{k}\tilde{g}_2(\bZ_j)\right| \cip 0
\] 
as $n \to \infty$. Finally, $2/n\sum_{i=1}^{m}\tilde{g}_1(\bZ_i)\cip 0$, as its variance converges to zero. We have thus shown (\ref{eq:wendler:cp}), which completes the proof.
\end{proof}

\begin{proof}[Proof of Theorem \ref{th:localpower}]

It remains to calculate the quantities $c_1,c_2$ introduced above for the change-point test based on Kendall's $\tau$, under the local alternative as specified in Model~\ref{mod:la}.  Observe that, in this case, $\theta =\tau_F$, $\theta_1^{(n)} = \tau_{F,F_n}$, and 
$\theta_2^{(n)} = \tau_{F_n,F_n}$, 
where $F$ denotes the joint distribution of $(X_i,Y_i)$ before the change, and $F_n$ denotes the joint distribution of $(X_i^{(n)},Y_i^{(n)})$ after the change. 

We now provide a formula for $\tau_{F,G}$ in the case when both distributions $F, G$ are absolutely continuous, with densities $f(x,y)$ and $g(x,y)$.   Let $(X_1,Y_1)$ and $(X_2,Y_2)$ be random variables with densities $f(x,y)$, and $g(x,y)$, respectively, and let $(X_1,Y_1)$ be independent of $(X_2,Y_2)$. Then we have
\begin{equation}
\tau_{F,G}=2(\tau_{F,G}^{(I)}+\tau_{F,G}^{(II)})-1,
\end{equation}
where 
\begin{eqnarray*}
 \tau_{F,G}^{(I)} &=& P(X_1\leq X_2, Y_1 \leq Y_2) 
  =\int_{-\infty}^\infty \int_{x_1}^\infty \int_{-\infty}^\infty \int_{y_1}^\infty f(x_1,y_1) g(x_2,y_2) dy_2 dy_1 dx_2 dx_1 \\
 \tau_{F,G}^{(II)}
 &=& P(X_1\geq X_2, Y_1 \geq Y_2) =
 \int_{-\infty}^\infty \int_{x_2}^\infty \int_{-\infty}^\infty \int_{y_2}^\infty f(x_1,y_1) g(x_2,y_2) dy_2 dy_1 dx_2 dx_1. 
\end{eqnarray*}
Noting that $\tau_F=\tau_{F,F}$, we obtain $\tau_F=2(\tau_{F,F}^{(I)}+\tau_{F,F}^{(II)})-1$, where $\tau_{F,F}^{(I)}$ and $\tau_{F,F}^{(II)}$ are given by the above integrals with $f=g$. Finally, we obtain
\[
  \tau_{F,G}-\tau_F = 2\left((\tau_{F,G}^{(I)}-\tau_{F,F}^{(I)})+(\tau_{F,G}^{(II)} -\tau_{F,F}^{(II)})\right)
\]

Under Model~\ref{mod:la}, we have $G=F_n$, where $F_n$ is the distribution of 
$(X_1+\frac{\Delta}{\sqrt{n}}Y_1,Y_1)$. By the transformation formula for densities, $F_n$ has the density
\[
   f_n(x_2,y_2)=f(x_2-\frac{\Delta}{\sqrt{n}} y_2,y_2).
\]
Thus, we obtain
\[
  \tau_{F,F_n}^{(I)}-\tau_{F,F}^{(I)}=\int_{-\infty}^\infty \int_{x_1}^\infty \int_{-\infty}^\infty \int_{y_1}^\infty f(x_1,y_1) \left(f(x_2-\frac{\Delta}{\sqrt{n}}y_2,y_2) -f(x_2,y_2)\right) dy_2 dy_1 dx_2 dx_1.
\]
Under the assumptions made in Model \ref{mod:la} on the densities, we then obtain 
\[
  \lim_{n\rightarrow \infty} \sqrt{n}(\tau_{F,F_n}^{(I)}-\tau_{F,F}^{(I)})=- \Delta \int_{-\infty}^\infty \int_{x_1}^\infty \int_{-\infty}^\infty \int_{y_1}^\infty f(x_1,y_1) y_2 f_1(x_2,y_2) dy_2 dy_1 dx_2 dx_1,
\]
where $f_1(x,y)=\frac{\partial}{\partial x}f(x,y)$. Furthermore, 
\[
 \int_{x_1}^\infty f_1(x_2,y_2) dx_2= -f(x_1,y_2),
\]
and thus 
\[
  \lim_{n\rightarrow \infty} \sqrt{n}(\tau_{F,F_n}^{(I)}-\tau_{F,F}^{(I)}) \ = \ \Delta \int_{-\infty}^\infty  \int_{-\infty}^\infty \int_{y_1}^\infty f(x_1,y_1) y_2 f(x_1,y_2) dy_2 dy_1 dx_1
\] 
\[ = \ \Delta \int_{-\infty}^\infty \left(\int_{-\infty}^\infty \int_{-\infty}^{y_2} y_2 f(y_1|x) f(y_2|x) dy_1 dy_2    \right) f_X^2(x) dx 
\]\[
   = \ \Delta \int_{-\infty}^\infty \left(\int_{-\infty}^\infty y_2 F(y_2|x) f(y_2|x) dy_2    \right) f_X^2(x) dx 
	\ = \   \Delta \int_{-\infty}^\infty \left(\int_{-\infty}^\infty y_2 F(y_2|x) F(dy_2|x)    \right) f_X^2(x) dx,
\]
where $f_X$ denotes the marginal density of $X$, and where $f(y|x)$ and $F(y|x)$ denote the conditional density, respectively the conditional distribution function of $Y$ given $X=x$.

With similar calculations, we obtain
\[
 \lim_{n\rightarrow \infty} \sqrt{n}(\tau_{F,F_n}^{(II)}-\tau_{F,F}^{(II)})=
 \Delta \int_{-\infty}^\infty \left(\int_{-\infty}^\infty y_2  (F(y_2|x) -1) F(dy_2|x)    \right) f_X^2(x) dx.
\]
Finally, one can show that $\lim_{n\rightarrow \infty} \sqrt{n} (\tau_{F_n,F_n}-\tau_{F,F}) = 2 \lim_{n\rightarrow \infty} \sqrt{n} (\tau_{F,F_n}-\tau_{F,F})$.
\end{proof}


\begin{thebibliography}{}

\bibitem[\protect\citeauthoryear{Aue, H{\"o}rmann, {Horv\'ath}, and
  Reimherr}{Aue et~al.}{2009}]{aue:2009}
Aue, A., S.~H{\"o}rmann, L.~{Horv\'ath}, and M.~Reimherr (2009) 
\newblock Break detection in the covariance structure of multivariate time
  series models.
\newblock {\em Annals of Statistics\/}~{\em 37}, 4046--4087.

\bibitem[\protect\citeauthoryear{Berkes, H{\"o}rmann, and Schauer}{Berkes
  et~al.}{2009}]{berkes:hoermann:schauer:2009}
Berkes, I., S.~H{\"o}rmann, and J.~Schauer (2009) 
\newblock Asymptotic results for the empirical process of stationary sequences.
\newblock {\em Stochastic Processes and their Applications\/}~{\em 119},
  1298--1324.

\bibitem[\protect\citeauthoryear{Bilodeau and Brenner}{Bilodeau and
  Brenner}{1999}]{bilodeau:brenner:1999}
Bilodeau, M. and D.~Brenner (1999) 
\newblock {\em Theory of Multivariate Statistics.}
\newblock {Springer}.

\bibitem[\protect\citeauthoryear{Borovkova, Burton, and Dehling}{Borovkova
  et~al.}{2001}]{borovkova:burton:dehling:2001}
Borovkova, S., R.M. Burton, and H.~Dehling (2001) 
\newblock Limit theorems for functionals of mixing processes with applications
  to {$U$}-statistics and dimension estimation.
\newblock {\em Transactions of the American Mathematical Society\/}~{\em
  353}, 4261--4318.

\bibitem[\protect\citeauthoryear{B\"ucher, J\"aschke, and Wied}{B\"ucher
  et~al.}{2015}]{buecher:jaeschke:wied:2014}
B\"ucher, A., S.~J\"aschke, and D.~Wied (2015) 
\newblock Nonparametric tests for constant tail dependence with an application
  to energy and finance.
\newblock {\em Journal of Econometrics\/}~{\em 187}, 154--168.

\bibitem[\protect\citeauthoryear{B\"ucher and Kojadinovic}{B\"ucher and
  Kojadinovic}{2016}]{buecher:kojadinovic:2016}
B\"ucher, A. and I.~Kojadinovic (2016) 
\newblock {Dependent multiplier bootstraps for non-degenerate {$U$}-statistics
  under mixing conditions with applications}.
\newblock {\em Journal of Statistical Planning and Inference\/}~{\em 170},
  83--105.

\bibitem[\protect\citeauthoryear{B{\"u}cher, Kojadinovic, Rohmer, and
  Segers}{B{\"u}cher et~al.}{2014}]{bucher:2014}
B{\"u}cher, A., I.~Kojadinovic, T.~Rohmer, and J.~Segers (2014) 
\newblock Detecting changes in cross-sectional dependence in multivariate time
  series.
\newblock {\em Journal of Multivariate Analysis\/}~{\em 132}, 111--128.

\bibitem[\protect\citeauthoryear{Croux and Dehon}{Croux and
  Dehon}{2010}]{croux:dehon:2010}
Croux, C. and C.~Dehon (2010) 
\newblock Influence functions of the {Spearman} and {Kendall} correlation
  measures.
\newblock {\em Statistical Methods and Applications\/}~{\em 19}, 497--515.

\bibitem[\protect\citeauthoryear{{Cs\"org\H{o}} and {Horv\'ath}}{{Cs\"org\H{o}}
  and {Horv\'ath}}{1997}]{csorgo:horvath:1997}
{Cs\"org\H{o}}, M. and L.~{Horv\'ath} (1997) 
\newblock {\em Limit Theorems in Change-Point Analysis.}
\newblock Wiley.

\bibitem[\protect\citeauthoryear{de~Jong and Davidson}{de~Jong and
  Davidson}{2000}]{dejong:2000}
de~Jong, R.M. and J.~Davidson (2000) 
\newblock Consistency of kernel estimators of heteroscedastic and
  autocorrelated covariance matrices.
\newblock {\em Econometrica\/}~{\em 68}, 407--424.

\bibitem[\protect\citeauthoryear{Dehling and Fried}{Dehling and
  Fried}{2012}]{dehling:fried:2012}
Dehling, H. and R.~Fried (2012) 
\newblock Asymptotic distribution of two-sample empirical {$U$}-quantiles with
  applications to robust tests for shifts in location.
\newblock {\em Journal of Multivariate Analysis\/}~{\em 105\/}, 124--140.

\bibitem[\protect\citeauthoryear{Dengler}{Dengler}{2010}]{dengler:2010}
Dengler, B. (2010) 
\newblock {\em On the Asymptotic Behaviour of the Estimator of {K}endall's
  Tau.}
\newblock Ph.~D.~thesis, Technische Universit{\"a}t Wien, Austria.

\bibitem[\protect\citeauthoryear{Denker and Keller}{Denker and
  Keller}{1986}]{denker:keller:1986}
Denker, M. and G.~Keller (1986) 
\newblock Rigorous statistical procedures for data from dynamical systems.
\newblock {\em Journal of Statistical Physics\/}~{\em 44}, 67--93.

\bibitem[\protect\citeauthoryear{Fan, Liu, and Wang}{Fan
  et~al.}{2015}]{fan:liu:wang:2015}
Fan, J., H.~Liu, and W.~Wang (2015) 
\newblock Large covariance estimation through elliptical factor models.
\newblock {\em arXiv preprint arXiv:1507.08377\/}.

\bibitem[\protect\citeauthoryear{Gerstenberger and Vogel}{Gerstenberger and
  Vogel}{2015}]{gerstenberger:vogel:2015}
Gerstenberger, C. and D.~Vogel (2015) 
\newblock On the efficiency of gini's mean difference.
\newblock {\em Statistical Methods and Applications\/}~{\em 24\/}, 569--596.

\bibitem[\protect\citeauthoryear{Giacomini, H\"ardle, and Spokoiny}{Giacomini
  et~al.}{2009}]{giacomini:hardle:spokoiny:2009}
Giacomini, E., W.~H\"ardle, and V.~Spokoiny (2009) 
\newblock Inhomogeneous dependence modeling with time-varying copulae.
\newblock {\em Journal of Business and Economics Statistics\/}~{\em 27},
  224--234.

\bibitem[\protect\citeauthoryear{Gombay and {Horv{\'a}th}}{Gombay and
  {Horv{\'a}th}}{1995}]{gombay:horvath:1995}
Gombay, E. and L.~{Horv{\'a}th} (1995) 
\newblock An application of {$U$}-statistics to change-point analysis.
\newblock {\em Acta Scientiarum Mathematicarum\/}~{\em 60\/}, 345--357.

\bibitem[\protect\citeauthoryear{Gombay and Horv{\'a}th}{Gombay and
  Horv{\'a}th}{1999}]{gombay:horvath:1999}
Gombay, E. and L.~Horv{\'a}th (1999) 
\newblock Change-points and bootstrap.
\newblock {\em Environmetrics\/}~{\em 10}, 725--736.

\bibitem[\protect\citeauthoryear{Ibragimov, Ibragimov, and Kattuman}{Ibragimov
  et~al.}{2013}]{ibragimov:ibragimov:kattuman:2013}
Ibragimov, M., R.~Ibragimov, and P.~Kattuman (2013) 
\newblock Emerging markets and heavy tails.
\newblock {\em Journal of Banking and Finance\/}~{\em 37}, 2546--2559.

\bibitem[\protect\citeauthoryear{Incl{\'a}n and Tiao}{Incl{\'a}n and
  Tiao}{1994}]{inclan:tiao:1994}
Incl{\'a}n, C. and G.C. Tiao (1994) 
\newblock Use of cumulative sums of squares for retrospective detection of
  changes of variance.
\newblock {\em Journal of the American Statistical Association\/}~{\em
  89}, 913--923.

\bibitem[\protect\citeauthoryear{Inoue}{Inoue}{2001}]{inoue:2001}
Inoue, A. (2001) 
\newblock Testing for distributional change in time series.
\newblock {\em Econometric Theory\/}~{\em 17}, 156--187.

\bibitem[\protect\citeauthoryear{Kojadinovic}{Kojadinovic}{2015}]{R:npcp}
Kojadinovic, I. (2015) 
\newblock {\em npcp: Some Nonparametric Tests for Change-Point Detection in
  Possibly Multivariate Observations}.
\newblock R package version 0.1-6.

\bibitem[\protect\citeauthoryear{Kojadinovic, Quessy, and Rohmer}{Kojadinovic
  et~al.}{2016}]{kojadinovic:quessy:rohmer:2016}
Kojadinovic, I., J.-F. Quessy, and T.~Rohmer (2016) 
\newblock {Testing the constancy of Spearman's rho in multivariate time
  series}.
\newblock {\em Annals of the Institute of Statistical Mathematics\/}~{\em
  68}, 929--954.

\bibitem[\protect\citeauthoryear{Loretan and Phillips}{Loretan and
  Phillips}{1994}]{loretan:phillips:1994}
Loretan, M. and P.C.B.~Phillips (1994) 
\newblock Testing the covariance stationarity of heavy-tailed time series.
\newblock {\em Journal of Empirical Finance\/}~{\em 1}, 211--248.

\bibitem[\protect\citeauthoryear{McNeil, Frey, and Embrechts}{McNeil
  et~al.}{2005}]{mcneil:frey:embrechts:2005}
McNeil, A., R.~Frey, and P.~Embrechts (2005) 
\newblock {\em Quantitative Risk Management: Concepts, Techniques, and Tools.}
\newblock Princeton University Press.

\bibitem[\protect\citeauthoryear{Nelsen}{Nelsen}{2006}]{nelsen:2006}
Nelsen, R.B. (2006) 
\newblock {\em An Introduction to Copulas.\/} (2nd ed.) 
\newblock Springer.

\bibitem[\protect\citeauthoryear{Patton}{Patton}{2006}]{patton:2006}
Patton, A. (2006) 
\newblock Modelling asymmetric exchange rate dependence.
\newblock {\em International Economic Review\/}~{\em 47}, 527--556.

\bibitem[\protect\citeauthoryear{P{\"o}tscher and Prucha}{P{\"o}tscher and
  Prucha}{1997}]{poetscher:prucha:1997}
P{\"o}tscher, B.M. and I.R. Prucha (1997) 
\newblock {\em Dynamic Nonlinear Econometric Models.}
\newblock Springer-Verlag.

\bibitem[\protect\citeauthoryear{Quessy, Sa{\"{i}}d, and Favre}{Quessy
  et~al.}{2013}]{quessy:said:favre:2013}
Quessy, J.-F., M.~Sa{\"{i}}d, and A.-C. Favre (2013) 
\newblock Multivariate {K}endall's tau for change-point detection in copulas.
\newblock {\em Canadian Journal of Statistics\/}~{\em 41}, 65--82.

\bibitem[\protect\citeauthoryear{Segers}{Segers}{2012}]{segers:2012}
Segers, J. (2012) 
\newblock Asymptotics of empirical copula processes under nonrestrictive
  smoothness assumptions.
\newblock {\em Bernoulli\/}~{\em 18}, 764--782.

\bibitem[\protect\citeauthoryear{Stock}{Stock}{1994}]{stock:1994}
Stock, J. (1994) 
\newblock Unit roots, structural breaks and trends.
\newblock In R.~Engle and D.~McFadden (eds.), {\em Handbook of econometrics,
  Vol. IV}, pp.\  2739--2841. North-Holland.

\bibitem[\protect\citeauthoryear{Stock and Watson}{Stock and
  Watson}{2010}]{stock:watson:2010}
Stock, J. and M.~Watson (2010) 
\newblock {\em Introduction to econometrics\/} (3rd ed.) 
\newblock Addison-Wesley.

\bibitem[\protect\citeauthoryear{Stout}{Stout}{1974}]{stout:1974}
Stout, W.F. (1974) 
\newblock {\em Almost Sure Convergence.}
\newblock Academic Press.

\bibitem[\protect\citeauthoryear{van~der Vaart and Wellner}{van~der Vaart and
  Wellner}{1996}]{vandervaart:wellner:1996}
van~der Vaart, A.W. and J.A. Wellner (1996) 
\newblock {\em Weak Convergence and Empirical Processes. With Applications to
  Statistics.}
\newblock Springer-Verlag.

\bibitem[\protect\citeauthoryear{Wied, Dehling, van Kampen, and Vogel}{Wied
  et~al.}{2014}]{wied:dehling:vankampen:vogel:2013}
Wied, D., H.~Dehling, M.~van Kampen, and D.~Vogel (2014) 
\newblock A fluctuation test for constant {S}pearmans rho with nuisance-free
  limit distribution.
\newblock {\em Computational Statistics and Data Analysis\/}~{\em 76},
  723--736.

\bibitem[\protect\citeauthoryear{Wied, Kr{\"a}mer, and Dehling}{Wied
  et~al.}{2012}]{wied:kraemer:dehling:2011}
Wied, D., W.~Kr{\"a}mer, and H.~Dehling (2012) 
\newblock Testing for a change in correlation at an unknown point in time using
  an extended delta method.
\newblock {\em Econometric Theory\/}~{\em 28}, 570--589.

\bibitem[\protect\citeauthoryear{Wooldridge and White}{Wooldridge and
  White}{1988}]{wooldridge:white:1988}
Wooldridge, J.M. and H.~White (1988) 
\newblock Some invariance principles and central limit theorems for dependent
  heterogeneous processes.
\newblock {\em Econometric Theory\/}~{\em 4}, 210--230.

\bibitem[\protect\citeauthoryear{Yoshihara}{Yoshihara}{1976}]{yoshihara:1976}
Yoshihara, K.-I. (1976) 
\newblock Limiting behavior of {$U$}-statistics for stationary, absolutely
  regular processes.
\newblock {\em Zeitsschrift f{\"u}r Wahrscheinlichkeitstheorie und Verwandte
  Gebiete\/}~{\em 35}, 237--252.

\end{thebibliography}
\end{document}


%
%
%
%

\maketitle
%
%


\bigskip
\begin{center}
{\bf Abstract}
\end{center}

This document contains proofs and further technical results for the article ``Testing for changes in Kendall's tau''.
It consists of one section, labeled Appendix D. The results are labeled D.1, D.2, $\ldots$. Results from the main document are referred to as in the main document. The labels contain references to the respective section, e.g., Corollary 3.1 can be found in Section 3, Lemma A.2 in Appendix A of the main document.

\appendix
\setcounter{section}{3}
\section{Proofs of lemmas}



We first prove Lemmas \ref{lem:l_0} and \ref{lem:neu1} from Appendix \ref{sec:app:pned}.

\begin{proof}[Proof of Lemma \ref{lem:l_0}]
Part (\ref{lem:l_0(i)}) is straightforward. \par
\emph{Part (\ref{lem:l_0(ii)}):} There are positive constants $C_1, C_2$ such that
\begin{eqnarray}
a^p_{p,k} 			\ = \ 			E\left| \bX_0 - E\left( \bX_0 \middle| \F_{-k}^k \right) \right|_p^p 
\nonumber		& \le & C_1 E\left| \bX_0 - E\left( \bX_0 \middle| \F_{-k}^k \right) \right|_1 \\
\label{eq:2}		& \le & C_2 E\left| \bX_0 - f_{k}\left(\bZ_{-k},\ldots,\bZ_k \right) \right|_1.
\end{eqnarray}
The first inequality is due to the boundedness of $\bX_0$. The second inequality can be shown by applying Jensen's inequality for the conditional expectation $E(\cdot | \F_{-k}^k  )$ to the convex function $|\cdot|_1$. Further, for any $\varepsilon > 0$ we have
\[
	E\left| \bX_0 - f_{k}\left(\bZ_{-k},\ldots,\bZ_k \right) \right|_1
	\ \le \ 
	C_3 P\left( \left| \bX_0 - f_{k}\left(\bZ_{-k},\ldots,\bZ_k \right) \right|_1 > \varepsilon \right)
 +   \varepsilon
	\ \le \  
	C_3 \Phi(\varepsilon) a_k
	 +   \varepsilon
\]
Combining this with (\ref{eq:2}) we arrive at $a^p_{p,k}  \le  C_2 C_3 \Phi(\varepsilon) a_k  +   C_2 \varepsilon$.
By first choosing $\varepsilon$ sufficiently small and then $k$ sufficiently large we can make the left-hand side arbitrarily small, and hence $(\bX_n)_{n \in \Z}$ is $L_p$-NED, $p \ge 1$, on $(\bZ_n)_{n \in \Z}$.
In particular, if condition (\ref{eq:ned.con}) holds, we get 
$a^p_{p,k}  \le  C_2 C_3 \Phi(s_k) a_k  +   C_2 s_k  =  O(s_k)$.
%
%
%
\emph{Part (\ref{lem:l_0(iii)}):}
Let $f_k(\bZ_{-k},\ldots,\bZ_k) = E(\bX_0|\F_{-k}^k)$. By means of the H\"older and the Markov inequalities we have for every $\varepsilon > 0$:
\[
	P\left( \left| \bX_0 - f_{k}\left(\bZ_{-k},\ldots,\bZ_k \right) \right|_1 > \varepsilon \right)
	\, \le \,
  	q^{p-1} \varepsilon^{-p }\, E\left| \bX_0 - E(\bX_0|\F_{-k}^k) \right|_p^p 
\]
Choosing $\Phi(\varepsilon) = q^{p-1} \varepsilon^{-p}$ and $a_k = a_{p,k}^p$ we have $a_k \to 0$ as $k \to \infty$, and $(\bX_n)_{n \in \Z}$ is hence $P$-NED on $(\bZ_n)_{n \in \Z}$.
\end{proof}



\begin{proof}[Proof of Lemma \ref{lem:neu1}] By the definition of $g_1$, we have that, for any $\bx,\bx'\in\R^d$,
\begin{equation*}
\left|g_1(\bx)-g_1(\bx')\right|=\left|Eg(\bx,\bX_0)-Eg(\bx',\bX_0)\right|\leq \int\left|g(\bx,\by)-g(\bx',\by)\right|dF(\by)
\end{equation*}
and consequently, for independent copies $\bX,\bY$ of $\bX_0$,
\begin{multline*}
E\left(\sup_{|\bx-\bX|\leq\epsilon}\left|g_1(\bx)-g_1(\bX)\right|\right)^2 \\
\leq E\left(\sup_{|\bx-\bX|\leq\epsilon}\int\left|g(\bx,\by)-g(\bX,\by)\right|dF(\by)\right)^2
\leq E\left(\int\sup_{|\bx-\bX|\leq\epsilon}\left|g(\bx,\by)-g(\bX,\by)\right|dF(\by)\right)^2 \\
\leq E\left(\sup_{|\bx-\bX|\leq\epsilon}\left|g(\bx,\bY)-g(\bX,\bY)\right|\right)^2
\leq E\left(\sup_{|\bx-\bX|\leq\epsilon,|\by-\bY|\leq\epsilon}\left|g(\bx,\by)-g(\bX,\bY)\right|\right)^2\leq L\epsilon.
\end{multline*}
Recall that the conditional expectation minimizes the $L_2$-distance, so
\begin{equation*}
	E \left\{ g_1(\bX_0) - E (g_1(\bX_0) | \F^k_{-k}) \right\}^2 
	\leq 
	E \left\{ g_1(\bX_0) - g_1(\bX_{0,k}) \right\}^2,
\end{equation*}
where $\bX_{0,k} = f_k(\bZ_{-k},\ldots, \bZ_k)$. 
Now we will make use of the $P$-near epoch dependence and the H\"older inequality and obtain
\begin{multline*}
E \left( g_1(\bX_0) - g_1(\bX_{0,k}) \right)^2\\
=  E \left( g_1(\bX_0) - g_1(\bX_{0,k}) \right)^2 \bEins_{\{|\bX_0-\bX_{0,k}|>s_l\}}
	+ E\left( g_1(\bX_0) - g_1(\bX_{0,k}) \right)^2 \bEins_{\{|\bX_0-\bX_{0,k}|\leq s_l\} }\\
\leq 
\left(2\left\|g_1^2(\bX_0)\right\|_{\frac{2+\delta}{2}}+2\left\|g_1^2(\bX_{0,k})\right\|_{\frac{2+\delta}{2}}\right)
		\left(P(|\bX_0-\bX_{0,k}|>s_l)\right)^{\frac{\delta}{2+\delta}} \\
+ E\left( g_1(\bX_0) - g_1(\bX_{0,k}) \right)^2 \bEins_{\{|\bX_0-\bX_{0,k}|\leq s_l\}}
\leq C(s_l^{\frac{2+\delta}{\delta}})^{\frac{\delta}{2+\delta}}+ L s_l
\leq Cs_l
\end{multline*}
and finally
\begin{equation*}
\left\|g_1(\bX_0) - E (g_1(\bX_0) | \F^k_{- k})\right\|_2\leq  \left(E \left( g_1(\bX_0) - g_1(\bX_{0,k}) \right)^2\right)^{1/2}\leq Cs_l^{1/2},
\end{equation*}
which completes the proof.
\end{proof}
%



The following lemmas \ref{lem2} to \ref{lem6} are required for the proof of Theorem \ref{theo1} (Invariance principle for the sequential $U$-process).

\begin{lemma}\label{lem2} Let $(\bZ_n)_{n\in\Z}$ be a stationary and absolutely regular stochastic process with mixing coefficients $(\beta_k)_{k\in\N}$. Then there exist processes $(\bZ'_n)_{n\in\Z}$ and $(\bZ''_n)_{n\in\Z}$, independent of each other and both with the same distribution as $(\bZ_n)_{n\in\Z}$ such that
$P\{(\bZ'_n)_{n\geq k}=(\bZ_n)_{n\geq k}\}=1-\beta_k$ and $P\{(\bZ''_n)_{n\leq 0}=(\bZ_n)_{n\leq 0}\}=1-\beta_k$.
\end{lemma}
%
This can be proved in a similar way to Lemma 2.5 of \citet{borovkova:burton:dehling:2001}.

%
%
%
%
%
%
%
\begin{lemma}\label{lem3} Let $(\bX_n)_{n\in\Z}$ be $P$-NED of an absolutely regular sequence $(\bZ_n)_{n\in\Z}$ and $g$ be a kernel, such that Assumptions \ref{ass1} and \ref{ass2} hold. Then there is a constant $C>0$, such that for $i,k,l\in\N$, $\epsilon>0$:
\begin{equation*}
	\left\|g_2(\bX_i,\bX_{i+k+2l})-g_2(\bX_{i,l},\bX_{i+k+2l,l})\right\|_2
	\leq 
	C (\sqrt{\epsilon}+a_l^{\frac{\delta}{2+\delta}}\Phi^{\frac{\delta}{2+\delta}}(\epsilon)+\beta^{\frac{\delta}{2+\delta}}_k), 
\end{equation*}
where $\bX_{i,l}=f_l(\bZ_{i-l},\ldots,\bZ_{i+l})$.
\end{lemma}
%
%
\begin{proof}[Proof of Lemma \ref{lem3}] First note that we can rewrite any $\bX_i$ such that
\begin{equation*}
	\bX_i	=	f_\infty\left((\bZ_{i+n})_{n\in\Z}\right).
\end{equation*}
By Lemma \ref{lem2} there exist independent copies $(\bZ'_n)_{n\in\Z}$ and $(\bZ''_n)_{n\in\Z}$ of $(\bZ_n)_{n\in\Z}$ satisfying
$P\{(\bZ'_n)_{n\geq i+l+k}=(Z_n)_{n\geq i+l+k}\}=1-\beta_k$ and $P\{(\bZ''_n)_{n\leq i+l}=(Z_n)_{n\leq i+l}\}=1-\beta_k$.
We define $\bX'_i=f_{\infty}((\bZ'_{n+i})_{n\in\Z})$ and $\bX''_i=f_{\infty}((\bZ''_{n+i})_{n\in\Z})$.
%
%
Analogously, $\bX'_{i,l}=f_{l}(\bZ'_{i-l},\ldots,\bZ'_{i+l})$ and $\bX''_{i,l}=f_{l}(\bZ'_{i-l},\ldots,\bZ'_{i+l})$.  Now we can conclude that
\begin{multline*}
	\left\|g_2(\bX_i,\bX_{i+k+2l})-g_2(\bX_{i,l},\bX_{i+k+2l,l})\right\|_2 
	\leq \left\|g_2(\bX_i,\bX_{i+k+2l})-g_2(\bX''_{i},\bX'_{i+k+2l})\right\|_2 \\
	+	\left\|g_2(\bX''_i,\bX'_{i+k+2l})-g_2(\bX''_{i,l},\bX'_{i+k+2l,l})\right\|_2 
	+ \left\|g_2(\bX''_{i,l},\bX'_{i+k+2l,l})-g_2(\bX_{i,l},\bX_{i+k+2l,l})\right\|_2 
\end{multline*}
We will treat the three summands on the right-hand side separately. For the first summand, we use the H\"older inequality to obtain
\begin{multline*}
	\left\|g_2(\bX_i,\bX_{i+k+2l})-g_2(\bX''_{i},\bX'_{i+k+2l})\right\|_2\\
	= \big\| \{g_2(\bX_i,\bX_{i+k+2l})-g_2(\bX''_{i},\bX'_{i+k+2l})\}
	\bEins_{\left\{(\bZ'_n)_{n\geq i+l+k}\neq(\bZ_n)_{n\geq i+l+k}\text{ or } (\bZ''_n)_{n\leq i+l}\neq(\bZ_n)_{n\leq i+l}\right\}} \big\|_2 \\
	\leq 	
	\left\|(g_2(\bX_i,\bX_{i+k+2l})-g_2(\bX''_{i},\bX'_{i+k+2l})\right\|_{\frac{2+\delta}{2}}\\ 
\times \left(P\{(\bZ'_n)_{n\geq i+l+k}\neq(\bZ_n)_{n\geq i+l+k}\text{ or } (\bZ''_n)_{n\leq i+l}\neq(\bZ_n)_{n\leq i+l}\}\right)^{\frac{\delta}{2+\delta}}
	\leq 2M^{2/(2+\delta)}2\beta^{\delta/(2+\delta)}_k
\end{multline*}
with Assumption \ref{ass2} and Lemma \ref{lem2}. With the same arguments, the third summand is also bounded by $C\beta^{\delta/(2+\delta)}_k$. For the second summand, we make use of the variation condition and the $P$-NED property
\begin{multline*}
	\left\|g_2(\bX''_i,\bX'_{i+k+2l})-g_2(\bX''_{i,l},\bX'_{i+k+2l,l})\right\|_2\\
	\leq 
	\big\|\{ g_2(\bX''_i,\bX'_{i+k+2l})-g_2(\bX''_{i,l},\bX'_{i+k+2l,l})\}
			\bEins_{\left\{\left|\bX''_i-\bX''_{i,l}\right|\leq\epsilon, \left|\bX'_{i+k+2l}-\bX'_{i+k+2l,l}\right|\leq\epsilon\right\}}\big\|_2 \\
	+ \big\|\{(g_2(\bX''_i,\bX'_{i+k+2l})-g_2(\bX''_{i,l},\bX'_{i+k+2l,l})\}
			\bEins_{\left\{\left|\bX''_i-\bX''_{i,l}\right|>\epsilon\text{ or } \left|\bX'_{i+k+2l}-\bX'_{i+k+2l,l}\right|>\epsilon\right\}}\big\|_2 \\
\leq \sqrt{L\epsilon}+2M^{2/(2+\delta)}2P\left(\left|\bX_{i+k+2l}-f_l(\bZ'_{i+k+l},\ldots,\bZ'_{i+k+3l})\right|>\epsilon\right)^{\frac{\delta}{2+\delta}} 
\leq C( \sqrt{\epsilon} +a_l^{\frac{\delta}{2+\delta}}\Phi^{\frac{\delta}{2+\delta}}(\epsilon)).
\end{multline*}
The proof is complete.
\end{proof}

%
%
%
%
%
%
%
%
\begin{lemma}\label{lem4} Under Assumptions \ref{ass3}, \ref{ass2} and \ref{ass1}, we have for $n_1<n_2\leq n$:
\begin{equation*}
\bigg\|\sum_{\substack{1\leq i<j\\n_1<j\leq n_2}}\big(g_2(\bX_i,\bX_j)-g_2(\bX_{i,l},\bX_{j,l})\big)\bigg\|_2\leq C(n_2-n_1)n^{1/4}.
\end{equation*}
\end{lemma}

\begin{proof}[Proof of Lemma \ref{lem4}] We set $l=\lfloor n^{1/4}\rfloor$ and $\epsilon=l^{-6}$. If $k<0$, we set $\beta_k=1.$ With Lemma \ref{lem3} and some straightforward calculus, we obtain
\begin{multline*}
	\left\|\sum_{\substack{1\leq i<j\\n_1<j\leq n_2}}\left(g_2(\bX_i,\bX_j)-g_2(\bX_{i,l},\bX_{j,l})\right)\right\|_2 
	\leq 
	C(n_2-n_1) \sum_{k=1}^n (\sqrt{\epsilon}+a_l^{\frac{\delta}{2+\delta}}\Phi^{\frac{\delta}{2+\delta}}(\epsilon)+\beta^{\frac{\delta}{2+\delta}}_{k-l})   \\
	\leq 
	C(n_2-n_1)\sum_{k=1}^n 
	\left( \lfloor n^{1/4}\rfloor^{-3} + a_l^{\frac{\delta}{2+\delta}}\Phi^{\frac{\delta}{2+\delta}}(\lfloor n^{1/4}\rfloor^{-6})+\beta^{\frac{\delta}{2+\delta}}_{k-\lfloor n^{1/4}\rfloor} \right)
\leq C(n_2-n_1)n^{1/4}. 
\end{multline*}
\end{proof}
%
%
%
%
%
%
%
%

As we approximate the random variables $\bX_i$ by $\bX_{i,l}=f_l(\bZ_{i-l},\ldots,\bZ_{i+l})$, we introduce the Hoeffding decomposition of the kernel 
$g$ with respect to these approximating random variables. Let $(\tilde{\bZ}_n)_{n\in\Z}$ be an independent copy of the sequence $(\bZ_n)_{n\in\Z}$ and $\tilde{\bX}_{i,l}=f_l(\tilde{\bZ}_{i-l},\ldots,\tilde{\bZ}_{i+l})$. We define
\[
U_l = Eg(\bX_{0,l},\tilde{\bX}_{0,l}), \quad 
g_{1,l}(x) = Eg(\bx,\tilde{\bX}_{0,l})-U_l, \quad 
g_{2,l}(\bx,\by) = g(\bx,\by)-g_{1,l}(\bx)-g_{1,l}(\by)-U_l.
\]

\begin{lemma}\label{lem4a} Under Assumptions \ref{ass3}, \ref{ass2} and \ref{ass1}, we have for $n_1<n_2\leq n$:
\begin{equation*}
	\bigg\|\sum_{\substack{1\leq i<j\\n_1<j\leq n_2}}\left(g_{2,l}(\bX_{i,l},\bX_{j,l})-g_2(\bX_{i,l},\bX_{j,l})\right)\bigg\|_2
	\leq 
	C(n_2-n_1)n^{1/4}.
\end{equation*}
\end{lemma}

\begin{proof}[Proof of Lemma \ref{lem4a}] Let $(\tilde{\bZ}_n)_{n\in\Z}$ be an independent copy of the sequence $(\bZ_n)_{n\in\Z}$ and $\tilde{\bX}_{i}=f_\infty((\tilde{\bZ}_{n+l})_{l\in\Z})$. Then
$g_2(\bx,\by) = g(\bx,\by)-Eg(\bx,\tilde\bX_j)-Eg(\tilde\bX_i,\by)+Eg(\tilde{\bX}_i,\bX_j)$ and
$g_{2,l}(\bx,\by)= g(\bx,\by)-Eg(\bx,\tilde{\bX}_{j,l})-Eg(\tilde{\bX}_{i,l},\by)+Eg(\tilde{\bX}_{i,l},\bX_{j,l})$.
for every $i, j, l \in \N$.
So we can conclude that
\[ 
	\left\|g_{2,l}(\bX_{i,l},\bX_{j,l})-g_2(\bX_{i,l},\bX_{j,l})\right\|_2
	\leq \left\|g(\bX_{i,l},\tilde{\bX}_{j,l})-g(\bX_{i,l},\tilde{\bX}_{j})\right\|_2
\]\[	
 \qquad	+ \left\|g(\tilde{\bX}_{i,l},\bX_{j,l})-g(\tilde{\bX}_{i},\bX_{j,l})\right\|_2
	+ \left\|g(\tilde{\bX}_{i,l},\bX_{j,l})- g(\tilde{\bX}_{i},\bX_j)\right\|_2 
\]
\[
	\leq C(\sqrt{\epsilon}+a_l^{\frac{\delta}{2+\delta}}\Phi^{\frac{\delta}{2+\delta}}(\epsilon)+\beta^{\frac{\delta}{2+\delta}}_k),
\]
where the bound in the last line can be proved along the lines of Lemma \ref{lem3}. The assertion of Lemma \ref{lem4a} then follows analogously to Lemma \ref{lem4}.
\end{proof}

%
%
%
%
%
%
%
%
%
\begin{lemma}\label{lem5}
Under Assumptions \ref{ass3}, \ref{ass2} and \ref{ass1}, we have for $n_1 < n_2\leq n$
\begin{equation*}
	E\left(\sum_{\substack{1\leq i<j\\n_1<j\leq n_2}}g_{2,l}(\bX_{i,l},\bX_{j,l})\right)^2\leq C(n_2-n_1)\,n\,l^2.
\end{equation*}
\end{lemma}
%
%
%
\begin{proof}[Proof of Lemma \ref{lem5}] By Lemma 1 of \citet{yoshihara:1976} we obtain
\begin{equation*}
		\left|E \{ g_{2,l}(\bX_{i_{(1)},l},\bX_{i_{(2)},l})g_{2,l}(\bX_{i_{(3)},l},\bX_{i_{(4)},l}) \} \right|\leq C\beta^{\delta/(2+\delta)}_{m-l}
\end{equation*}
with $m=\max\left\{i_{(2)}-i_{(1)},i_{(4)}-i_{(3)}\right\}$, where $i_{(1)}\leq i_{(2)} \leq i_{(3)}\leq i_{(4)}$ are the ordered indices $i_1,i_2,i_3,i_4$. 
To simplify the notation, we define $\beta_{m-l}=1$ for $m-l<0$. Note that, for given $m$, we have less than $n$ choices for $i_{(1)}$ and less than $n_2-n_1$ choices for $i_{(4)}$, which has to be either $i_2$ or $i_4$. If $m = i_{(2)}-i_{(1)}$, there are $m$ possibilities for $i_{(3)}$, and if $m=i_{(4)}-i_{(3)}$, there are $m$ possibilities for $i_{(2)}$. We conclude that
\begin{multline*}
	E\Bigg(\sum_{\substack{1\leq i<j\\n_1<j\leq n_2}}g_{2,l}(\bX_{i,l},\bX_{j,l})\Bigg)^2
	=	\sum_{\substack{1\leq i_1<j_1\\n_1<j\leq n_2}}\sum_{\substack{1\leq i<j\\n_1<j\leq n_2}}C\beta^{\delta/(2+\delta)}_{m-l}
	\ \leq \ C(n_2-n_1)n \sum_{m=1}^n m\beta^{\delta/(2+\delta)}_{m-l} \\
	\leq C(n_2-n_1)n\left(\sum_{m=1}^l m+l^2\sum_{m=l+1}^n (m-l) \beta^{\delta/(2+\delta)}_{m-l}\right)\leq C(n_2-n_1)nl^2.
\end{multline*}
The proof is complete.
\end{proof}

%
%
%
%
%
%
%
%
%
%
%
\begin{lemma}\label{lem6} Under Assumptions \ref{ass3}, \ref{ass2} and \ref{ass1}, we have
\begin{enumerate}[(i)]
\item
$ \displaystyle \Big\|\max_{n\leq 2^k}\sum_{1\leq i<j\leq n}g_2(\bX_i,\bX_j)\Big\|_2\leq C 2^{\frac{5}{4}k}k $ \ and
\item
$ \sum_{1\leq i<j\leq n}g_2(\bX_i,\bX_j)=O\left(n^{\frac{5}{4}}\log^2 (n)\right)$ almost surely.
\end{enumerate}
\end{lemma}
%
%
%
\begin{proof}[Proof of Lemma \ref{lem6} (i):] We will use Lemma \ref{lem4} and Lemma \ref{lem5} with $l=l_k=\lfloor 2^{\frac{1}{4}k}\rfloor$ and split the expectation into three parts:
\begin{multline*}
	\bigg\|\max_{n\leq 2^k}\Big|\sum_{1\leq i<j\leq n}g_2(\bX_i,\bX_j)\Big|\bigg\|_2 
	\leq \bigg\|\max_{n\leq 2^k}\Big|\sum_{1\leq i<j\leq n}\left(g_2(\bX_i,\bX_j)-g_2(\bX_{i,l},\bX_{j,l})\right)\Big|\bigg\|_2 \\
	+\bigg\|\max_{n\leq 2^k}\Big|\sum_{1\leq i<j\leq n}(g_{2,l}(\bX_{i,l},\bX_{j,l})-g_2(\bX_{i,l},\bX_{j,l}))\Big| \bigg\|_2\\
	+\bigg\|\max_{n\leq 2^k}\Big|\sum_{\substack{1\leq i<j\leq n}}g_{2,l}(\bX_{i,l},\bX_{j,l})\Big|\bigg\|_2=I_k+I\!\! I_k+I\!\! I\!\! I_k.
\end{multline*}
Now by Lemma \ref{lem4}
\begin{multline*}
	I_k  
	\leq \bigg\|\sum_{j=1}^{2^k}\Big|\sum_{1\leq i<j}\left(g_2(\bX_i,\bX_j)-g_2(\bX_{i,l},\bX_{j,l})\right)\Big|\bigg\|_2 \\
	\leq \sum_{j=1}^{2^k}\bigg\|\sum_{1\leq i<j}\left(g_2(\bX_i,\bX_j)-g_2(\bX_{i,l},\bX_{j,l})\right)\bigg\|_2
		\leq C 2^{\frac{5}{4}k}.
\end{multline*}
Similarly, we get by Lemma \ref{lem4a} that $I\!\! I_k\leq C 2^{\frac{5}{4}}$. To deal with $I\!\! I\!\! I_k$, we define the random variables
$Y_{j,l}=\sum_{1\leq i<j}g_{2,l}(\bX_{i,l},\bX_{j,l})$
and rewrite $I\!\! I\!\! I_k$ as $\big\|\max_{n\leq 2^k}|\sum_{j=1}^n Y_{j,l}|\big\|_2$.
%
%
As we have $E(\sum_{j=n_1}^{n_2} Y_{j,l})^2\leq C(n_2-n_1)n^{3/2}$ by Lemma \ref{lem5}, we can use Theorem 1 of \citet{moricz1976moment} to obtain
$ E(\max_{n\leq 2^k}\sum_{j=1}^n Y_{j,l})^2\leq C2^{5k/2}k^2$, which completes the proof of part (1). 

\emph{Part (ii):} It suffices to prove that
\begin{equation*}
\max_{n\leq 2^k}\sum_{1\leq i<j\leq n}g_2(\bX_i,\bX_j)=O\left( 2^{\frac{5}{4}k}k^2\right)
\end{equation*}
almost surely. By means of the Chebychev inequality, we have for any $\epsilon>0$
\begin{multline*}
\sum_{k=1}^\infty P\left(\max_{n\leq 2^k}\sum_{1\leq i<j\leq n}g_2(\bX_i,\bX_j)>\epsilon2^{5k/4}k^2 \right)\\
\leq \sum_{k=1}^\infty\frac{1}{2^{5k/2}k^4}E\left(\max_{n\leq 2^k}\sum_{1\leq i<j\leq n}g_2(\bX_i,\bX_j)\right)^2\leq C\sum_{k=1}^\infty  \frac{2^{5k/2}k^2}{2^{5k/2}k^4}=C\sum_{k=1}^\infty \frac{1}{k^2}<\infty,
\end{multline*}
and the almost sure convergence follows by the Borel--Cantelli lemma.
\end{proof}
%
%



Towards the proof of Theorem \ref{theo3}, we further state and prove Lemmas \ref{lem9} to \ref{lem11}.
%
%
%
\begin{lemma}\label{lem9} Under Assumptions \ref{ass3}, \ref{ass2} and \ref{ass1}, we have for any constants $(c_i)_{i\in\N}$
\begin{equation*}
E\bigg(\sum_{i=1}^ng_1(\bX_i)c_i\bigg)^2\leq Cn\left(\max_{i=1,\ldots,n}|c_i|\right)^2,
\end{equation*}
\end{lemma}
%
%
\begin{proof}[Proof of Lemma \ref{lem9}] By Assumption \ref{ass1} and Lemma \ref{lem:neu1}, the process $(g_1(\bX_n))_{n\in\Z}$ is $L_2$-NED with approximation constants $a_{l,2}=O(l^{-3})$. We have the following bound for the autocovariance: 
\[
	\left|Eg_1(\bX_i)g_1(\bX_{i+k})\right|
\]\[
	\leq	
	\left|E \{E(g_1(\bX_i)|\F_{i-l}^{i+l})E(g_1(\bX_{i+k})|\F_{i+k-l}^{i+k+l})\}\right|
	+ 
	\left|E\{g_1(\bX_i)\left(g_1(\bX_{i+k})-E(g_1(\bX_{i+k})|\F_{i+k-l}^{i+k+l})\right)\}\right|
\]\[
	\quad +\left|E\left(E(g_1(\bX_{i+k})|\F_{i+k-l}^{i+k+l})\left(g_1(\bX_{i})-E(g_1(\bX_{i})|\F_{i-l}^{i+l})\right)\right)\right|
\]\[
	\leq 10\left\|E(g_1(\bX_{i+k})|\F_{i+k-l}^{i+k+l})\right\|_{2+\delta}^2\beta^{\frac{\delta}{2+\delta}}_{k-2l}+2\left\|g_1(\bX_i)\right\|_2	
	\left\|g_1(\bX_{i+k})-E(g_1(\bX_{i+k})|\F_{i+k-l}^{i+k+l})\right\|_2
\]\[
	\leq C(a_{l,2}+\beta^{\frac{\delta}{2+\delta}}_{l}),
\]
where we used the inequality by \citet{davydov1970invariance} and set $l=\lfloor\frac{k}{3}\rfloor$. Recall that $\F_i^j$ is defined as the $\sigma$-algebra generated by $\bZ_i,\ldots,\bZ_j$. Now it follows from the stationarity of the process that
\begin{multline*}
E\left(\sum_{i=1}^ng_1(\bX_i)c_i\right)^2\leq \sum_{i,j=1}^n\left|Eg_1(\bX_i)g_1(\bX_{j})\right|\left|c_i\right|\left|c_j\right|
\\\leq \left(\max_{i=1,\ldots,n}|c_i|\right)^2\sum_{i,j=1}^n\left|Eg_1(\bX_i)g_1(\bX_{j})\right|\leq C\left(\max_{i=1,\ldots,n}|c_i|\right)^2n\sum_{k=0}^n\left|Eg_1(\bX_0)g_1(\bX_{k})\right|\\
\leq C\left(\max_{i=1,\ldots,n}|c_i|\right)^2n\sum_{l=0}^{\lfloor\frac{n}{3}\rfloor}(a_{l,2}+\beta^{\frac{\delta}{2+\delta}}_{l})\leq Cn\left(\max_{i=1,\ldots,n}|c_i|\right)^2,
\end{multline*}
and the Lemma is proved.
\end{proof}

%
%
%
%
%
%
%
%
%
%
%
%
\begin{lemma}\label{lem10} Under the Assumptions \ref{ass3}, \ref{ass2} and \ref{ass1} for any constants $(c_{ij})_{i,j \in\N}$
\begin{equation*}
\left|E\sum_{j_1,j_2,j_3,j_4=1}^ng_2(\bX_{j_1},\bX_{j_2})g_2(\bX_{j_3},\bX_{j_4})c_{j_1j_3}\right|\leq C\max_{i,j\in\{1,\ldots,n\}}\left|c_{ij}\right|n^{5/2}.
\end{equation*}
\end{lemma}

\begin{proof}[Proof of Lemma \ref{lem10}] Recall that we abbreviate $f_l(\bZ_{j-l},\ldots,\bZ_{j+l})$ by $\bX_{j,l}$. We use the triangle inequality to obtain
\[
\bigg|E\sum_{j_1,j_2,j_3,j_4=1}^ng_2(\bX_{j_1},\bX_{j_2})g_2(\bX_{j_3},\bX_{j_4})c_{j_1j_3}\bigg|\] \[
\leq \bigg|E\sum_{j_1,j_2,j_3,j_4=1}^ng_{2,l}({\bX}_{j_1,l},{\bX}_{j_2,l})g_{2,l}({\bX}_{j_3,l},{\bX}_{j_4,l})c_{j_1j_3}\bigg|\] \[
+\bigg|E\sum_{j_1,j_2,j_3,j_4=1}^n\left(g_{2,l}({\bX}_{j_1,l},{\bX}_{j_2,l})-g_{2}({\bX}_{j_1,l},{\bX}_{j_2,l})\right)\left(g_{2,l}({\bX}_{j_3,l},{\bX}_{j_4,l})-g_{2}({\bX}_{j_3,l},{\bX}_{j_4,l})\right)c_{j_1j_3}\bigg|\] \[
+\bigg|E\sum_{j_1,j_2,j_3,j_4=1}^n\left(g_{2}({\bX}_{j_1,l},{\bX}_{j_2,l})-g_{2}(\bX_{j_1},\bX_{j_2})\right)\left(g_{2}({\bX}_{j_3,l},{\bX}_{j_4,l})-g_{2}(\bX_{j_3},\bX_{j_4})\right)c_{j_1j_3}\bigg|\] \[
+\bigg|E\sum_{j_1,j_2,j_3,j_4=1}^ng_{2,l}({\bX}_{j_1,l},{\bX}_{j_2,l})\left(g_{2,l}({\bX}_{j_3,l},{\bX}_{j_4,l})-g_{2}({\bX}_{j_3,l},{\bX}_{j_4,l})\right)c_{j_1j_3}\bigg|\displaybreak[0]\] \[
+\bigg|E\sum_{j_1,j_2,j_3,j_4=1}^n\left(g_{2,l}({\bX}_{j_1,l},{\bX}_{j_2,l})-g_{2}({\bX}_{j_1,l},{\bX}_{j_2,l})\right)g_{2,l}({\bX}_{j_3,l},{\bX}_{j_4,l})c_{j_1j_3}\bigg|\] \[
+\bigg|E\sum_{j_1,j_2,j_3,j_4=1}^ng_{2,l}({\bX}_{j_1,l},{\bX}_{j_2,l}))\left(g_{2}({\bX}_{j_3,l},{\bX}_{j_4,l})-g_{2}(\bX_{j_3},\bX_{j_4})\right)c_{j_1j_3}\bigg|\displaybreak[0]\] \[
+\bigg|E\sum_{j_1,j_2,j_3,j_4=1}^n\left(g_{2}({\bX}_{j_1,l},{\bX}_{j_2,l})-g_{2}(\bX_{j_1},\bX_{j_2})\right)g_{2,l}({\bX}_{j_3,l},{\bX}_{j_4,l})c_{j_1j_3}\bigg|\] \[
+\bigg|E\sum_{j_1,j_2,j_3,j_4=1}^n\left(g_{2,l}({\bX}_{j_1l},{\bX}_{j_2,l})-g_{2}({\bX}_{j_1,l},{\bX}_{j_2,l})\right)\left(g_{2}({\bX}_{j_3,l},{\bX}_{j_4,l})-g_{2}(\bX_{j_3},\bX_{j_4})\right)c_{j_1j_3}\bigg|\] \[
+\bigg|E\sum_{j_1,j_2,j_3,j_4=1}^n\left(g_{2}({\bX}_{j_1,l},{\bX}_{j_2,l})-g_{2}(\bX_{j_1},\bX_{j_2})\right)\left(g_{2,l}({\bX}_{j_3,l},{\bX}_{j_4,l})-g_{2}({\bX}_{j_3,l},{\bX}_{j_4,l})\right)c_{j_1j_3}\bigg|\] \[
=I_n+I\!\! I_n+I\!\! I\!\! I_n+I\! V_n+V_n+V\!\! I_n+V\!\! I\!\! I_n+V\!\! I\!\! I\!\! I_n+I\!\! X_n.
\]
%
%
%
We will establish the bound only for some of the summands in order to keep this proof short. The bound for $I_n$ can be shown in the same way as Lemma \ref{lem5}. For $I\!\! I_n$, $I\!\! I\!\! I_n$, $V\!\! I\!\! I\!\! I_n$ and $I\!\! X_n$, we use the H\"older inequality and can then proceed similar to the proof of Lemma \ref{lem4} and \ref{lem4a}. For example by Lemmas \ref{lem3} and \ref{lem4a}, we have
\[
	V\!\! I\!\! I\!\! I_n
	\leq 
	\sum_{j_1,j_2,j_3,j_4=1}^n
	\left\|g_{2,l}({\bX}_{j_1,l},{\bX}_{j_2,l})-g_{2}({\bX}_{j_1,l},{\bX}_{j_2,l})\right\|_2
	\left\|g_{2}({\bX}_{j_3,l},{\bX}_{j_4,l})-g_{2}(\bX_{j_3},\bX_{j_4})\right\|_2c_{j_1j_3}
\]\[	
	\leq 
	C\max_{i,j\in\{1,\ldots,n\}}\left|c_{ij}\right|
	\sum_{j_1,j_2,j_3,j_4=1}^n
	(\sqrt{\epsilon}+a_l^{\frac{\delta}{2+\delta}}\Phi^{\frac{\delta}{2+\delta}}(\epsilon)+\beta^{\frac{\delta}{2+\delta}}_{|j_2-j_1|})
	(\sqrt{\epsilon}+a_l^{\frac{\delta}{2+\delta}}\Phi^{\frac{\delta}{2+\delta}}(\epsilon)+\beta^{\frac{\delta}{2+\delta}}_{|j_4-j_3|})
\]\[
	\leq C\max_{i,j\in\{1,\ldots,n\}}\left|c_{ij}\right|n^{5/2}
\] 
with $l=\lfloor n^{1/4}\rfloor$ and $\epsilon=l^{-6}$. For the summand $I\! V_n$, the H\"older inequality is used in a slightly different way
\[
	I\! V_n\leq \sum_{j_3,j_4=1}^n
	\bigg|
		E 
		\bigg\{ 
			\sum_{j_1,j_2=1}^ng_{2,l}({\bX}_{j_1,l},{\bX}_{j_2,l})
			\left(g_{2,l}({\bX}_{j_3,l},{\bX}_{j_4,l})-g_{2}({\bX}_{j_3,l},{\bX}_{j_4,l})\right)c_{j_1j_3} 
		\bigg\} 
	\bigg|
\]\[
	\leq \sum_{j_3,j_4=1}^n
	\bigg\|\sum_{j_1,j_2=1}^ng_{2,l}({\bX}_{j_1,l},{\bX}_{j_2,l})c_{j_1j_3}\bigg\|_2
	\big\|g_{2,l}({\bX}_{j_3,l},{\bX}_{j_4,l})-g_{2}({\bX}_{j_3,l},{\bX}_{j_4,l})\big\|_2
\]\[
	\leq 
	C\sum_{j_3,j_4=1}^n n^{5/4}\max_{i,j\in\{1,\ldots,n\}}\left|c_{ij}\right|(\sqrt{\epsilon}+a_l^{\frac{\delta}{2+\delta}}\Phi^{\frac{\delta}{2+\delta}}(\epsilon)+\beta^{\frac{\delta}{2+\delta}}_{|j_4-j_3|})\leq C\max_{i,j\in\{1,\ldots,n\}}\left|c_{ij}\right|n^{5/2}.
\]
A similar treatment of the summands $V_n$, $V\!\! I_n$ and $V\!\! I\!\! I_n$ completes the proof.
\end{proof}

%
%
%
%
%
%
%
%
%
%
%
%
%
%
%
%
%
\begin{lemma}\label{lem11} Under Assumptions \ref{ass3}, \ref{ass2}, \ref{ass1} and \ref{ass4}, we have, for $n \to \infty$,
\begin{equation*}
	E
	\bigg|
		\sum_{r=-n}^n\frac{1}{n}\sum_{i=1}^{n-|r|}\left(g_1(\bX_i)g_1(\bX_{i+|r|})-\hat{g}_1(\bX_i)\hat{g}_1(\bX_{i+|r|})\right)\kappa(|r|/b_n)
	\bigg|
	\rightarrow 0.
\end{equation*}
\end{lemma}

\begin{proof}[Proof of Lemma \ref{lem11}] We first expand the difference of $g_1$ and its estimator $\hat{g}_1$ as
\begin{equation*}
	g_1(\bx)-\hat{g}_1(\bx)=\frac{1}{n}\sum_{i=1}^ng_1(\bX_i)-\frac{1}{n}\sum_{i=1}^ng_2(\bx,\bX_i)+\frac{1}{n^2}\sum_{i,j=1}^ng_2(\bX_i,\bX_j).
\end{equation*}
With the help of this, we split the expectation into six parts and apply the triangle inequality:
\[ \textstyle
	E\Big|\sum_{r=-n}^n\frac{1}{n}\sum_{i=1}^{n-|r|}\left(g_1(\bX_i)g_1(\bX_{i+|r|})-\hat{g}_1(\bX_i)\hat{g}_1(\bX_{i+|r|})\right)\kappa(|r|/b_n)\Big|\]\[ \textstyle
	\leq E\Big|\sum_{r=-n}^n\frac{1}{n}\sum_{i=1}^{n-|r|}\left(g_1(\bX_i)-\hat{g}_1(\bX_i)\right)g_1(\bX_{i+|r|})\kappa(|r|/b_n)\Big|
\]\[ \textstyle
	+ E\Big|\sum_{r=-n}^n\frac{1}{n}\sum_{i=1}^{n-|r|}\left(g_1(\bX_{i+|r|})-\hat{g}_1(\bX_{i+|r|})\right)\hat{g}_1(\bX_{i})\kappa(|r|/b_n)\Big|
\]\[ \textstyle
	\leq E\Big|\sum_{r=-n}^n\frac{1}{n}\sum_{i=1}^{n-|r|}\frac{1}{n}\sum_{j=1}^ng_1(\bX_j)g_1(\bX_{i+|r|})\kappa(|r|/b_n)\Big|\displaybreak[0]
\]\[ \textstyle
	+ E\Big|\sum_{r=-n}^n\frac{1}{n}\sum_{i=1}^{n-|r|}\frac{1}{n}\sum_{j=1}^ng_2(\bX_i,\bX_j)g_1(\bX_{i+|r|})\kappa(|r|/b_n)\Big|\displaybreak[0]
\]\[ \textstyle
	+ E\Big|\sum_{r=-n}^n\frac{1}{n}\sum_{i=1}^{n-|r|}\frac{1}{n^2}\sum_{j_1,j_2=1}^ng_2(\bX_{j_1},\bX_{j_2})g_1(\bX_{i+|r|})\kappa(|r|/b_n)\Big|
\]\[ \textstyle
	+ E\Big|\sum_{r=-n}^n\frac{1}{n}\sum_{i=1}^{n-|r|}\frac{1}{n}\sum_{j=1}^ng_1(\bX_j)\hat{g}_1(\bX_{i})\kappa(|r|/b_n)\Big|
\]\[ \textstyle
	+ E\Big|\sum_{r=-n}^n\frac{1}{n}\sum_{i=1}^{n-|r|}\frac{1}{n}\sum_{j=1}^ng_2(\bX_{i+|r|},\bX_j)\hat{g}_1(\bX_{i})\kappa(|r|/b_n)\Big|
\]\[ \textstyle
	+ E\Big|\sum_{r=-n}^n\frac{1}{n}\sum_{i=1}^{n-|r|}\frac{1}{n^2}\sum_{j_1,j_2=1}^ng_2(\bX_{j_1},\bX_{j_2})\hat{g}_1(\bX_{i})\kappa(|r|/b_n)\Big|
\] \[
	\qquad = I_n+I\!\! I_n+I\!\! I\!\! I_n+I\! V_n+V_n+V\!\! I_n.
\]
%
%
%
For the first summand $I_n$, we use the H\"older inequality and Lemma \ref{lem9} with constants $c_i=\sum_{i_2=1}^n\kappa(|i-i_2|/b_n)=O(b_n)$ and obtain
\[
	\textstyle
	I_n = 
	E\Big|\frac{1}{n}\sum_{j=1}^ng_1(\bX_j)\Big|\cdot\Big|\frac{1}{n}\sum_{i=1}^{n}g_1(\bX_{i})\sum_{i_2=1}^n\kappa(|i-i_2|/b_n)\Big|
\]\[
	\qquad \leq 
	\textstyle
	\Big\|\frac{1}{n}\sum_{j=1}^ng_1(\bX_j)\Big\|_2
	\Big\|\frac{1}{n}\sum_{i=1}^{n}g_1(\bX_{i})\sum_{i_2=1}^n\kappa(|i-i_2|/b_n)\Big\|_2\leq C\frac{1}{\sqrt{n}}\frac{1}{\sqrt{n}}b_n\rightarrow 0
\] 
as $n\rightarrow\infty$ due to the assumptions on $b_n$. %
%
%
%
%
%
For $I\!\! I_n$, we use Lemma \ref{lem10} to obtain
\[ 
	\textstyle
	I\!\! I_n = E\bigg|\frac{1}{n}\sum_{i_1=1}^n\frac{1}{n}\sum_{i,j=1}^{n}g_2(\bX_i,\bX_j)g_1(\bX_{i_1})\kappa(|i-i_1|/b_n)\bigg|
\]\[
	\textstyle
	\leq 
	E
	\left[
		\big\{ \frac{1}{n}\sum_{i=1}^n g_1(\bX_{i_1})^2 \big\}^{1/2} 
		\Big\{
			\frac{1}{n}\sum_{i_1=1}^n \big(\frac{1}{n}\sum_{i,j=1}^{n}g_2(\bX_i,\bX_j)\kappa(|i-i_1|/b_n)\big)^2
		\Big\}^{1/2}
	\right]
\]\[
	\textstyle
	\leq \Big[ E \big\{  \frac{1}{n}\sum_{i=1}^n g_1(\bX_{i_1})^2  \big\} \Big]^{1/2}
\]\[
	\qquad \qquad
	\bigg[ E \big\{ 
		\frac{1}{n^3}\sum_{j_1,j_2,j_3,j_4=1}^ng_2(\bX_{j_1},\bX_{j_2})g_2(\bX_{j_3},\bX_{j_4})\sum_{i_1=1}^n\kappa(|j_1-i_1|/b_n)\kappa(|j_3-i_1|/b_n)
	\big\} \bigg]^{1/2}
\]\[
	\leq C\sqrt{\frac{n^{5/2}b_n}{n^3}}\rightarrow 0,
\]
since $c_{j_1j_2}=\sum_{i_1=1}^n\kappa(|j_1-i_1|/b_n)\kappa(|j_3-i_1|/b_n)=O(b_n)=o(\sqrt{n})$ and 
$	E\{\frac{1}{n}\sum_{i=1}^n g_1(\bX_{i_1})^2\}^2 \leq E\{ g_1(\bX_{0})^2 \}^2<\infty$
due to Assumption \ref{ass2}. For the third summand $I\!\! I\!\! I_n$, we use again the H\"older inequality and Lemma \ref{lem9} to get
\[	
	\textstyle
	I\!\! I\!\! I_n = 
	E\Big|\sum_{r=-n}^n\frac{1}{n}\sum_{i=1}^{n-|r|}\frac{1}{n^2}\sum_{j_1,j_2=1}^ng_2(\bX_{j_1},\bX_{j_2})g_1(\bX_{i+|r|})\kappa(|r|/b_n)\Big|
\]\[
	\leq 
	\bigg\| \frac{1}{n^2}\sum_{j_1,j_2=1}^ng_2(\bX_{j_1},\bX_{j_2})\bigg\|_2
	\bigg\|\frac{1}{n}\sum_{i=1}^{n-|r|}g_1(\bX_{i+|r|})\sum_{i_1=1}^n\kappa(|i-i_1|/b_n)\bigg\|_2
	\leq C\frac{1}{n^{3/4}}\frac{1}{\sqrt{n}}b_n
	\rightarrow 0.
\]
The convergence of the remaining parts $I\! V_n$, $V_n$ and $V\!\! I_n$ can be shown in the same way.
\end{proof}
%
%
%
%
%
%
%
%
%
%

\small